\numberwithin{equation}{section}
\numberwithin{figure}{section}
\newtheorem{proposition}{Proposition}[section]
\newtheorem{theorem}[proposition]{Theorem}
\newtheorem{lemma}[proposition]{Lemma}
\newtheorem{definition}[proposition]{Definition}
\newtheorem{remark}[proposition]{Remark}
\renewenvironment{proof}{\smallskip\noindent\emph{\textbf{Proof.}}%
  \hspace{1pt}}{\hspace{-5pt}{\nobreak\quad\nobreak\hfill\nobreak%
    $\square$\vspace{2pt}\par}\smallskip\goodbreak}
\newenvironment{proofof}[1]{\smallskip\noindent{\textbf{Proof~of~#1.}}%
  \hspace{1pt}}{\hspace{-5pt}{\nobreak\quad\nobreak\hfill\nobreak%
    $\square$\vspace{2pt}\par}\smallskip\goodbreak}
\newcommand{\Id}{\mathinner{\mathrm{Id}}}
\newcommand{\C}[1]{\mathbf{C}^{#1}}
\newcommand{\Cc}[1]{\mathbf{C}_c^{#1}}
\newcommand{\BV}{\mathbf{BV}}
\renewcommand{\L}[1]{\mathbf{L}^#1}
\newcommand{\Lloc}[1]{{\mathbf{L}_{\mathbf{loc}}^{#1}}}
\newcommand{\W}[2]{{\mathbf{W}^{#1,#2}}}
\newcommand{\modulo}[1]{{\left|#1\right|}}
\newcommand{\norma}[1]{{\left\|#1\right\|}}
\newcommand{\norm}[1]{{\left\|#1\right\|}}
\newcommand{\caratt}[1]{{\chi_{\strut#1}}}
\newcommand{\reali}{{\mathbb{R}}}
\newcommand{\R}{\mathbb{R}}
\newcommand{\naturali}{{\mathbb{N}}}
\newcommand{\N}{\mathbb{N}}
\renewcommand{\epsilon}{\varepsilon}
\renewcommand{\phi}{\varphi}
\renewcommand{\theta}{\vartheta}
\renewcommand{\O}{\mathcal{O}(1)}
\newcommand{\tv}{\mathinner{\rm TV}}
\newcommand{\spt}{\mathop{\rm spt}}
\renewcommand{\d}[1]{\mathinner{\mathrm{d}{#1}}}
\newcommand{\dd}[1]{\mathinner{\mathrm{d}{#1}}}
\renewcommand{\div}{\mathinner{\nabla\cdot}} 
\newcommand{\grad}{\mathinner{\nabla}}
\newcommand{\sOmega}{\mathop{\smash[b]{\,*_{{\strut\Omega}\!}}}}
\let\@fnsymbol\@arabic
\title{Non Linear Hyperbolic--Parabolic Systems \\ with Dirichlet Boundary Conditions}
\author{Rinaldo M.~Colombo\footnotemark[1] \and Elena Rossi\footnotemark[2]}
\date{}
\begin{document}
\maketitle
\footnotetext[1]{INdAM Unit \& Department of Information Engineering,
  University of Brescia, via Branze, 38, 25123 Brescia, Italy. Email:
  \texttt{rinaldo.colombo@unibs.it}} \footnotetext[2]{University of
  Modena and Reggio Emilia, INdAM Unit \& Department of Sciences and
  Methods for Engineering, Via Amendola 2 -- Pad.~Morselli, 42122
  Reggio Emilia, Italy. Email: \texttt{elena.rossi13@unimore.it}}

\begin{abstract}
  \noindent We prove the well posedness of a class of non linear and
  non local mixed hyperbolic--parabolic systems in bounded domains,
  with Dirichlet boundary conditions. In view of control problems,
  stability estimates on the dependence of solutions on data and
  parameters are also provided. These equations appear in models
  devoted to population dynamics or to epidemiology, for instance.

  \medskip

  \noindent\textit{2020~Mathematics Subject Classification:} 35M30,
  35L04, 35K20

  \medskip

  \noindent\textit{Keywords:} Mixed Hyperbolic--Parabolic Initial
  Boundary Value Problems; Hyperbolic--Parabolic Problems with
  Dirichlet Boundary Conditions.

\end{abstract}


\section{Introduction}
\label{sec:Intro}

We consider the following non linear system on a bounded domain
$\Omega \subset \reali^n$
\begin{equation}
  \label{eq:1}
  \left\{
    \begin{array}{l}
      \partial_t u
      + \div \left(u \,v(t,w) \right)
      =
      \alpha (t,x,w) \, u + a(t,x) \,,
      \\
      \partial_t w
      - \mu \, \Delta w
      =
      \beta (t,x,u,w) \, w + b (t,x)\,,
    \end{array}
  \right.
  \quad (t,x) \in [0,T] \times \Omega \,.
\end{equation}
Systems of this form arise, for instance, in predator--prey
systems~\cite{parahyp} and can be used in the control of parasites,
see~\cite{SAPM2021, Pfab2018489}. A similar mixed
hyperbolic--parabolic system is considered, in one space dimension,
in~\cite{MR4273477}, where Euler equations substitute the balance law
in~\eqref{eq:1}.

Motivated by these applications, terms in~\eqref{eq:1} may well
contain non local functions of the unknowns. Typically, whenever $u$
is a \emph{predator} and $w$ a \emph{prey}, the speed $v$ governing
the movement of $u$, when computed at a point $x$, i.e.,
$\left(v \left(t,w\right)\right) (x)$, depends on $w$ through
integrals of the form
$\int_{\norma{x-\xi} \leq \rho} f \left(t,x, \xi, w
  (t,\xi)\right)\d\xi$ so that $\rho$ is the \emph{horizon} at which
the predator feels the prey.

Under standard assumptions on the functions defining~\eqref{eq:1}, we
provide the analytical framework where the existence and the
uniqueness of solutions to~\eqref{eq:1}--\eqref{eq:49}.  Moreover, we
obtain a full set of \emph{a priori} and stability estimates on view
of the interest about control problems based on these equations,
see~\cite{MR4456851}. To this aim, we equip~\eqref{eq:1} with
homogeneous Dirichlet boundary conditions and initial data:
\begin{equation}
  \label{eq:49}
  \left\{
    \begin{array}{rcl}
      u (t,\xi)
      & =
      & 0
      \\
      w (t, \xi)
      & =
      & 0
    \end{array}
  \right.
  \quad(t,\xi) \in [0,T] \times \partial\Omega
  \qquad \mbox{ and } \qquad
  \left\{
    \begin{array}{rcl}
      u (0,x)
      & =
      & u_o (x)
      \\
      w (0,x)
      & =
      & w_o (x)
    \end{array}
  \right.
  \quad x \in \Omega \,.
\end{equation}

We stress that the whole construction is settled in $\L1$, a usual
choice for balance laws but less common in the case of the parabolic
equation. This choice is motivated by the clear physical meaning of
\emph{total population} attached to this norm, whenever solutions are
positive -- a standard situation in the motivating models. As is well
known, in parabolic equations, $\L2$ or $\W{k}{2}$ are more standard
choices, also thanks to the further properties of reflexive spaces,
see for instance the recent papers~\cite{Colli, HanSongYu}.

The introduction of a boundary, with the corresponding boundary
conditions, affects the whole analytical structure, differently in the
two equations. Indeed, as is well known, the hyperbolic equation for
$u$ may well lead to problems that are locally overdetermined,
resulting in the boundary condition to be simply neglected,
see~\cite{MR542510, siam2018, MR2322018, MR1884231}. On the contrary,
the solution to the parabolic equation attains along the boundary the
prescribed value, for all positive times, see~\cite{FriedmanBook,
  MR0241822, QuittnerSouplet}.

We stress that in the hyperbolic case, different definitions of
solutions are available, see~\cite{MR542510, MR2322018, MR3819847,
  MR1884231}. Here, we provide Definition~\ref{def:Hyp} that unifies
different approaches, also allowing to prove an intrinsic uniqueness
of solutions, i.e., independent of the way solutions are constructed.

Particularly relevant are the estimates on the dependence of $(u,w)$
on the terms $a,b$ in~\eqref{eq:1}, which typically play the role of
controls. Indeed, in the applications of~\eqref{eq:1} to biological
problems, $a$ and $b$ typically measure the deployment of parasitoids
or chemicals that hinder the propagation or reproduction of harmful
parasites, see~\cite{SAPM2021, Pfab2018489}. It is with reference to
this context that we care to ensure the positivity of solutions,
whenever the data and the controls are positive.

\medskip

The next section, after the necessary introduction of the notation,
presents the result. Proofs and further technical details are
deferred to Section~\ref{sec:proofs}, where different paragraphs refer
to the parabolic problem, to the hyperbolic one and to the coupling.

\section{Main Results}
\label{sec:main}

Throughout, the following notation is used.
$\R_+ = [0, +\infty\mathclose[$, $\R_- =\mathopen]-\infty,0]$. If
$A \subseteq \R^n$, the characteristic function $\caratt{A}$ is
defined by $\caratt{A} (x) = 1$ iff $x \in A$ and $\caratt{A} (x) = 0$
iff $x \in \R^n\setminus A$. For $x_o \in \R^n$ and $r>0$, $B (x_o,r)$
is the open sphere centered at $x_o$ with radius $r$.  We fix a time
$T > 0$ and the following condition on the spatial domain $\Omega$:
\begin{enumerate}[label=$\mathbf{(\Omega)}$]
\item\label{it:omega} $\Omega$ is a non empty, bounded and connected
  open subset of $\reali^n$, with $\C{2,\gamma}$ boundary, for a
  $\gamma \in \mathopen]0,1\mathclose]$.
\end{enumerate}

\noindent This condition is mainly motivated by the treatment of the
parabolic part. Here we mostly use the framework in~\cite[Appendix~B,
\S~48]{QuittnerSouplet}. Other possible regularity assumptions on
$\partial\Omega$ are in~\cite[Chapter~4, \S~4, p.~294]{MR0241822}.

\medskip

We pose the following assumptions on the functions appearing in
problem~\eqref{eq:1}:
\begin{enumerate}[label=$\mathbf{(\boldsymbol{v})}$]
\item\label{it:v}
  $v: [0,T] \times \L\infty (\Omega; \R) \to (\C2 \cap \W1\infty)
  (\Omega; \R^n)$ is such that for a constant $K_v>0$ and for a map
  $C_v \in \Lloc\infty ([0,T]\times\reali_+; \reali_+)$ non decreasing
  in each argument, for all $t,t_1,t_2 \in [0,T]$ and
  $w,w_1, w_2 \in \L\infty (\Omega;\reali)$,
  \begin{align*}
    \norma{v (t,w)}_{\L\infty (\Omega;\R^n)}
    \leq \
    & K_v \, \norma{w}_{\L1 (\Omega;\R)}
    \\
    \norma{D_x v (t,w)}_{\L\infty (\Omega;\R^n)}
    \leq \
    & K_v\, \norma{w}_{\L1 (\Omega;\R)}
    \\
    \norma{v (t_1,w_1) - v (t_2,w_2)}_{\L\infty (\Omega; \R^n)}
    \leq \
    & K_v \left(
      \modulo{t_2 - t_1} + \norma{w_2 - w_1}_{\L1 (\Omega; \R)}\right)
    \\
    \norma{D^2_x v (t,w)}_{\L1 (\Omega;\R^{n\times n})}
    \leq \
    & C_v \left(t,\norma{w}_{\L1 (\Omega;\R)}\right) \,
      \norma{w}_{\L1 (\Omega;\R)}
    \\
    \norma{\div \left(v (t_1,w_1) - v (t_2,w_2)\right)}_{\L\infty (\Omega; \R)}
    \leq \
    & C_v \left(t,\max_{i=1,2}\norma{w_i}_{\L1 (\Omega;\R)}\right) \,
      \norma{w_1 - w_2}_{\L1 (\Omega;\R)} \,.
  \end{align*}
\end{enumerate}
\begin{enumerate}[label=$\mathbf{(\boldsymbol{\alpha})}$]
\item\label{it:alpha} $\alpha: [0,T] \times \Omega \times \R \to \R$
  admits a constant $K_\alpha> 0$ such that, for a.e.~$t \in [0,T]$
  and all $ w, w_1,w_2 \in \R$
  \begin{align*}
    \sup_{x \in \Omega} \modulo{\alpha (t,x,w_1) - \alpha (t,x,w_2)}
    \le \
    & K_\alpha \; \modulo{w_1-w_2}
    \\
    \sup_{(x,w)\in\Omega\times\R} \alpha (t,x,w) \le \
    & K_\alpha \left(1+w\right)
  \end{align*}
  and for all $w \in\BV(\Omega; \R)$
  \begin{displaymath}
    \tv \left(\alpha\left(t,\cdot,w (t,\cdot)\right)\right)
    \le
    K_\alpha  \left(1+\norma{w}_{\L\infty (\Omega; \R)} + \tv (w)\right) \,.
  \end{displaymath}

\end{enumerate}
\begin{enumerate}[label=$\mathbf{(\boldsymbol{a})}$]
\item\label{it:a} $a \in \L1 \left([0,T]; \L\infty(\Omega; \R)\right)$
  and for all $t \in [0,T]$, $a (t) \in \BV (\Omega;\R)$.
\end{enumerate}
\begin{enumerate}[label=$\mathbf{(\boldsymbol{\beta})}$]
\item\label{it:betaFinal}
  $\beta: [0,T] \times \Omega \times \R \times R \to \R$ admits a
  constant $K_\beta > 0$ such that, for a.e.~$t \in [0,T]$ and all
  $u, u_1,u_2,w, w_1,w_2 \in \R$
  \begin{align*}
    \sup_{x \in \Omega} \modulo{\beta (t,x,u_1,w_1) - \beta (t,x,u_2,w_2)}
    \le \
    & K_\beta \left(\modulo{u_1-u_2} + \modulo{w_1-w_2}\right)
    \\
    \sup_{(x,u,w)\in\Omega\times\R \times \R} \beta (t,x,u,w) \le \
    & K_\beta \,.
  \end{align*}
\end{enumerate}
\begin{enumerate}[label=$\mathbf{(\boldsymbol{b})}$]
\item\label{it:b} $b \in \L1 ([0,T]; \L\infty( \Omega;\R))$ and for
  all $t \in [0,T]$, $b (t) \in \BV (\Omega;\R_+)$.
\end{enumerate}

\noindent Note in particular that~\ref{it:v} requires to bound
$\L\infty$ norms by means of $\L1$ norms, a feature typical of non
local operators. In fact, referring to predator--prey applications, it
is in general reasonable to assume that the $u$ (predator) population
moves according to \emph{averages} of the $w$ (prey) population
density or of its gradient. This justifies our requiring $v$
in~\eqref{eq:1} to be a \emph{non local} function of $w$.

Since we deal with the bounded domain $\Omega$, these averages need to
be computed only inside $\Omega$. To this aim, the modified
convolution introduced in~\cite[\S~3]{siam2018}, which reads
\begin{equation}
  \label{eq:58}
  (\rho \sOmega \eta) (x)
  =
  \dfrac{\int_\Omega \rho (y) \; \eta (x-y) \d{y}}{\int_{\Omega} \eta (x-y) \d{y}}
\end{equation}
is of help. The quantity $(\rho \sOmega \eta) (x)$ is an average of
the crowd density $\rho$ in $\Omega$ around $x$ as soon as the kernel
$\eta$ satisfies
\begin{description}
\item[($\boldsymbol{\eta}$)] $\eta (x) = \tilde\eta (\norma{x})$,
  where $\tilde\eta \in \C2 (\reali_+; \reali)$,
  $\spt \tilde\eta = [0, \ell_\eta]$, $\ell_\eta > 0$,
  $\tilde\eta' \leq 0$, $\tilde\eta' (0) = \tilde\eta'' (0) = 0$ and
  $\int_{\reali^N} \eta (\xi) \d\xi = 1$.
\end{description}

\noindent In those models where it is reasonable to assume that $u$
moves directed towards the areas with higher/lower density of $w$,
i.e., $v$ is parallel to the average gradient of $w$ in $\Omega$, we
select:
\begin{equation}
  \label{eq:61}
  v (t,w)
  \quad\big/\!\!\big/\quad
  \dfrac{\grad (w \sOmega \eta)}{\sqrt{1 + \norma{\grad (w \sOmega \eta)}^2}}
\end{equation}
where as kernel $\eta$ we choose for instance
$\eta (x) = \overline{\ell} \left(\ell^4 -
  \norma{x}^4\right)^4$. Here, $\ell$ has the clear physical meaning
of the distance, or \emph{horizon}, at which individuals of the $u$
population \emph{feel} the presence of the $w$ population.  The
normalization parameter $\overline\ell$ is chosen so that
$\int_{\R^2} \eta (x) \d{x} = 1$.  A choice like~\eqref{eq:61} is
consistent with the requirements~\ref{it:v}, as proved
in~\cite[Lemma~3.2]{siam2018}.

\medskip

To state what we mean by a \emph{solution} to~\eqref{eq:1}, we resort
to the standard definitions of solutions, separately, to the
hyperbolic and to the parabolic problems constituting~\eqref{eq:1}. In
the former case, we refer to~\cite{MR2322018, MR3819847, MR1884231}
and in the latter to the classical~\cite{QuittnerSouplet}.

\begin{definition}
  \label{def:sol}
  A pair $(u,w) \in \C0 \left([0,T]; \L1 (\Omega;\reali^2)\right)$ is
  a solution to~\eqref{eq:1}--~\eqref{eq:49} if, setting
  \begin{displaymath}
    \begin{array}{rcl}
      c (t,x)
      & =
      & v (t,w) (x)
      \\
      A (t,x)
      & =
      & \alpha \left(t,x,w (t,x)\right)
    \end{array}
    \qquad\qquad
    B (t,x)
    =
    \beta \left(t,x,u (t,x),w (t,x)\right) \,,
  \end{displaymath}
  the function $u$, according to Definition~\ref{def:Hyp}, solves
  \begin{displaymath}
    \left\{
      \begin{array}{l@{\qquad}r@{\,}c@{\,}l}
        \partial_t u
        + \div \left(u \, c(t,x) \right)
        =
        A (t,x)\, u + a(t,x)
        & (t,x)
        & \in
        & [0,T] \times \Omega
        \\
        u (t,\xi) = 0
        & (t,\xi)
        & \in
        & [0,T] \times \partial \Omega
        \\
        u (0,x) = u_o (x)
        & x
        & \in
        & \Omega
      \end{array}
    \right.
  \end{displaymath}
  and the function $w$, according to Definition~\ref{def:Para}, solves
  \begin{displaymath}
    \left\{
      \begin{array}{l@{\qquad}r@{\,}c@{\,}l}
        \partial_t w
        - \mu \, \Delta w
        =
        B (t,x) \, w + b (t,x)
        & (t,x)
        & \in
        & [0,T] \times \Omega
        \\
        w (t,\xi) = 0
        & (t,\xi)
        & \in
        & [0,T] \times \partial \Omega
        \\
        w (0,x) = w_o (x)
        & x
        & \in
        & \Omega \,.
      \end{array}
    \right.
  \end{displaymath}
\end{definition}

\noindent In the present framework, we also verify that, under
suitable conditions on the initial data, the solution $(u,w)$ enjoys
the following regularity
$\left(u (t), w (t)\right) \in (\BV \cap \L\infty) (\Omega;
\reali_+^2)$ for all $t \in [0,T]$.

We are now ready to state the main result of this work.

\begin{theorem}
  \label{thm:main}
  Let~\ref{it:omega}--\ref{it:v}--\ref{it:alpha}--\ref{it:a}--\ref{it:betaFinal}--\ref{it:b}
  hold. For any initial datum $(u_o,w_o)$ in
  $(\L\infty \cap \BV) (\Omega;\reali^2)$, problem~\eqref{eq:1} admits
  a unique solution on $[0,T]$ in the sense of
  Definition~\ref{def:sol}. Moreover, the following properties hold:

  \paragraph{A priori bounds:} There exists a constant $C$ depending
  only on $\Omega$, $K_\alpha,K_\beta,K_v$ such that for all
  $t \in [0,T]$ and for all initial data
  \begin{align}
    \label{eq:51}
    \norma{w (t)}_{\L1 (\Omega; \R)}
    \le \
    & e^{C \, t} \left(
      \norma{w_o}_{\L1 (\Omega;\reali)}
      +
      \norma{b}_{\L1 ([0,t]\times\Omega;\reali)}
      \right)
    \\
    \label{eq:52}
    \norma{w (t)}_{\L\infty (\Omega;\reali)}
    \le \
    & e^{C \, t}  \left(
      \norma{w_o}_{\L\infty (\Omega;\reali)}
      +
      \norma{b}_{\L1 ([0,t]; \L\infty(\Omega;\reali)}
      \right) \,.
  \end{align}
  Let $C_w (t)$ denote the maximum of the two right hand sides
  in~\eqref{eq:51} and in~\eqref{eq:52}; then,
  \begin{align}
    \label{eq:54}
    \norm{u (t)}_{\L1 (\Omega; \R)}
    \le \
    & \left(\norm{u_o}_{\L1 (\Omega; \R)}
      + \norm{a}_{\L1 ([0,t] \times \Omega; \R)}\right)
      \exp\left(C \, t
      \left(
      1
      +
      C_w (t)
      \right)
      \right)
    \\
    \label{eq:55}
    \norm{u (t)}_{\L\infty (\Omega; \R)}
    \le \
    & \left(\norm{u_o}_{\L\infty (\Omega; \R)}
      + \norm{a}_{\L1 ([0,t]; \L\infty (\Omega; \R))}\right)
      \exp\left(C \, t
      \left(
      1
      +
      2\, C_w (t)        \right)
      \right) \,.
  \end{align}

  \paragraph{Lipschitz continuity in the initial data:} Let
  $(\tilde u_o, \tilde w_o) \in (\L\infty \cap \BV) (\Omega;\reali^2)$
  and call $(\tilde u, \tilde w)$ the corresponding solution
  to~\eqref{eq:1}. Then, for all $t \in [0,T]$,
  \begin{displaymath}
    \norm{u(t)- \tilde u (t)}_{\L1 (\Omega; \R)}
    +
    \norm{w(t)- \tilde w (t)}_{\L1 (\Omega; \R)}
    \le \mathcal{C}(t)
    \left(
      \norm{u_o - \tilde u_o}_{\L1 (\Omega; \R)}
      + \norm{w_o - \tilde w_o}_{\L1 (\Omega; \R)}\right),
  \end{displaymath}
  where $\mathcal{C} \in \L\infty ([0,T];\R_+)$ depends on $\Omega$,
  $K_\alpha$, $K_\beta$, $K_v$, on the map $C_v$, on norms and total
  variation of the functions $a$ and $b$ and of the initial data.

  \paragraph{Stability with respect to the controls:} Let $\tilde a$
  satisfy~\ref{it:a}, $\tilde b$ satisfy~\ref{it:b} and call
  $(\tilde u, \tilde w)$ the corresponding solution
  to~\eqref{eq:1}. Then, for all $t \in [0,T]$,
  \begin{displaymath}
    \norm{u(t)- \tilde u (t)}_{\L1 (\Omega; \R)}
    +
    \norm{w(t)- \tilde w (t)}_{\L1 (\Omega; \R)}
    \le \mathcal{C}(t)\!
    \left(
      \norm{a - \tilde a}_{\L1 ([0,t] \times \Omega; \R)}
      \!+\!
      \norm{b - \tilde b}_{\L1 ([0,t] \times \Omega; \R)}
    \right),
  \end{displaymath}
  where $\mathcal{C} \in \L\infty ([0,T];\R_+)$ depends on $\Omega$,
  $K_\alpha$, $K_\beta$, $K_v$, on the map $C_v$, on norms and total
  variation of the functions $a, \tilde a$ and $b, \tilde b$ and of
  the initial data.

  \paragraph{Positivity:} If for all $(t,x) \in [0,T] \times \Omega$,
  $a (t,x) \geq 0$ and $b (t,x) \geq 0$, then for all initial datum
  $(u_o, w_o)$ with $u_o (x) \geq 0$ and $w_o (x) \geq 0$ for all
  $x \in \Omega$, the solution $(u, w)$ is such that $u (t,x) \geq 0$
  and $w (t,x) \geq 0$ for all $(t,x) \in [0,T] \times \Omega$.
\end{theorem}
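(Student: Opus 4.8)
The plan is to decouple the system and run a fixed-point argument on $[0,\tau]$ for a sufficiently small $\tau>0$, then iterate. Given $w$ in a suitable closed ball of $\C0([0,\tau];\L1(\Omega;\R))$, the coefficients $c(t,x)=v(t,w)(x)$, $A(t,x)=\alpha(t,x,w(t,x))$ are determined; by~\ref{it:v} and~\ref{it:alpha} they satisfy the regularity needed (in particular $c\in\C2\cap\W1\infty$, with $c=0$ on $\partial\Omega$ not required but $u$ kept zero there by the definition of solution) so that the hyperbolic IBVP for $u$ admits a unique solution $u=\mathcal{S}_H(w)$ in the sense of Definition~\ref{def:Hyp}; this also requires the $\BV$ and $\L\infty$ a priori bounds on $u$, which follow from~\ref{it:alpha},~\ref{it:a} together with the standard characteristics/renormalization estimates for transport equations with $\W1\infty$ velocity. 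Feeding this $u$ into $B(t,x)=\beta(t,x,u(t,x),w(t,x))$, which by~\ref{it:betaFinal} is bounded and Lipschitz, the linear parabolic IBVP $\partial_t w-\mu\Delta w=Bw+b$ with homogeneous Dirichlet data admits, by the classical theory in~\cite{QuittnerSouplet} under~\ref{it:omega}, a unique solution $w=\mathcal{S}_P(u,w)$; here the $\L1$ and $\L\infty$ estimates~\eqref{eq:51}--\eqref{eq:52} come from testing against $\sgn w$ (or an approximation) and from the maximum principle, using $\sup\beta\le K_\beta$. The composition $\Phi(w)=\mathcal{S}_P(\mathcal{S}_H(w),w)$ then maps a suitable ball into itself for $\tau$ small, and the Lipschitz estimates below (restricted to the same initial data) show $\Phi$ is a contraction on $\C0([0,\tau];\L1)$, hence has a unique fixed point. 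Since all a priori bounds are uniform on $[0,T]$, the local time $\tau$ does not shrink to zero, and we obtain a unique solution on all of $[0,T]$.

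The a priori bounds~\eqref{eq:51}--\eqref{eq:55} are obtained as follows. For $w$: multiply the parabolic equation by $\sgn w$, integrate over $\Omega$, use that the Laplacian term has a sign (its integral against $\sgn w$ is $\le 0$ by Kato's inequality / the boundary condition), and use $B\le K_\beta$ to get $\frac{d}{dt}\|w(t)\|_{\L1}\le K_\beta\|w(t)\|_{\L1}+\|b(t)\|_{\L1}$, whence~\eqref{eq:51} by Gronwall; similarly the maximum principle applied to $w\,e^{-K_\beta t}-\int_0^t e^{-K_\beta s}\|b(s)\|_{\L\infty}\,ds$ gives~\eqref{eq:52}. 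For $u$: along the characteristics $\dot x=c(t,x)$ the balance law reads $\frac{d}{dt}u=(A-\div c)\,u+a$; since $\sup_x A\le K_\alpha(1+\|w\|_{\L\infty})\le K_\alpha(1+C_w(t))$ and $\|\div c\|_{\L\infty}=\|\div v(t,w)\|_{\L\infty}\le K_v\|w\|_{\L1}\le K_v C_w(t)$ by~\ref{it:v}, integrating and using that $u$ stays zero outside the support dragged in from $u_o$ gives~\eqref{eq:54}--\eqref{eq:55} via Gronwall, with the factor $2\,C_w(t)$ in~\eqref{eq:55} accounting for both the growth and the compression terms in $\L\infty$.

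For the Lipschitz and stability statements, write the equations for the differences $U=u-\tilde u$, $W=w-\tilde w$. The difference $W$ solves a parabolic equation with source $B W + (B-\tilde B)\tilde w + (b-\tilde b)$; the $\L1$ estimate gives $\frac{d}{dt}\|W\|_{\L1}\le K_\beta\|W\|_{\L1}+\|(B-\tilde B)\tilde w\|_{\L1}+\|b-\tilde b\|_{\L1}$, and by~\ref{it:betaFinal} $|B-\tilde B|\le K_\beta(|U|+|W|)$, so $\|(B-\tilde B)\tilde w\|_{\L1}\le K_\beta\|\tilde w\|_{\L\infty}(\|U\|_{\L1}+\|W\|_{\L1})$, which is controlled since $\|\tilde w\|_{\L\infty}\le C_w(t)$. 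For $U$, the key is the standard $\L1$ stability estimate for transport/balance laws under variation of the velocity field and the coefficients: $\frac{d}{dt}\|U\|_{\L1}$ is bounded by $\bigl(\|\div c\|_{\L\infty}+\|A\|_{\L\infty}\bigr)\|U\|_{\L1}$ plus terms involving $\|c-\tilde c\|_{\W1\infty}$ and $\|A-\tilde A\|_{\L\infty}$ multiplied by $\|\tilde u\|_{\BV}$ or $\|\tilde u\|_{\L\infty}$ (this is where the $\BV$ regularity verified after Definition~\ref{def:sol} is essential); then~\ref{it:v} gives $\|c-\tilde c\|_{\W1\infty}\le K_v\|W\|_{\L1}+\dots$ and~\ref{it:alpha} gives $\|A-\tilde A\|_{\L\infty}\le K_\alpha\|W\|_{\L\infty}\le K_\alpha\,C_w(t)\cdot(\text{something})$. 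Summing the two differential inequalities and applying Gronwall yields both statements, with $\mathcal{C}\in\L\infty([0,T];\R_+)$ depending on the indicated quantities (the dependence on $C_v$ and on the total variation of $a,b$ and of the data enters through the a priori $\BV$ bounds on $u,w$). I expect the main obstacle to be precisely this transport stability step: carefully propagating a $\BV$ bound on $u$ through the hyperbolic IBVP with the non-homogeneous-in-space, merely $\W1\infty$ velocity $c$ — controlling the total variation created at the boundary where the Dirichlet condition may be overridden — and then using that bound to close the $\L1$ difference estimate with respect to variations of $c$; the non-local dependence of $v$ on $w$ only enters benignly through~\ref{it:v}, but the boundary behaviour of the hyperbolic part is the delicate point and is presumably where Definition~\ref{def:Hyp} and its intrinsic uniqueness are used.

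Finally, positivity: if $a,b,u_o,w_o\ge0$, the parabolic comparison principle applied to $w$ (with bounded coefficient $B$ and nonnegative source and data) gives $w\ge0$; then along the characteristics $\frac{d}{dt}u=(A-\div c)u+a$ with $a\ge0$ and $u|_{\partial\Omega}=0$, Gronwall in integrated form gives $u(t,x)\ge e^{-\int(\dots)}\,u_o(x_0)\ge0$ pointwise, so $u\ge0$ on $[0,T]\times\Omega$.
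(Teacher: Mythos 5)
Your overall strategy coincides with the paper's: decouple the system, solve the linear hyperbolic and parabolic sub-problems with frozen coefficients, close a contraction in $\L\infty([0,t_*];\L1)$ for a small $t_*$ that does not degenerate thanks to uniform a priori bounds, and obtain stability via Gronwall and positivity via the representation formulas. One step, however, as written would fail. In the Lipschitz/stability argument you invoke $\norma{A-\tilde A}_{\L\infty}\le K_\alpha\norma{W}_{\L\infty}$ with $W=w-\tilde w$. This cannot close the Gronwall loop: the whole difference estimate lives in $\L1$, and $\norma{W}_{\L\infty}$ is neither controlled by $\norma{W}_{\L1}$ nor small when the data are close -- it is merely bounded by $C_w(t)+C_{\tilde w}(t)$, so this term produces a constant rather than a quantity proportional to $\norma{u_o-\tilde u_o}_{\L1}+\norma{w_o-\tilde w_o}_{\L1}$. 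The paper's transport stability estimate (Lemma~\ref{lem:A}) is built precisely so that only $\norma{A-\tilde A}_{\L1([0,t]\times\Omega)}$ is required (multiplied by $\norma{u_o}_{\L\infty}+\norma{a}_{\L1([0,t];\L\infty)}$), and~\ref{it:alpha} then gives $\norma{A-\tilde A}_{\L1}\le K_\alpha\norma{W}_{\L1}$, which closes the argument. The analogous step for the velocity is fine as you state it, since~\ref{it:v} bounds $\norma{c-\tilde c}_{\L\infty}$ and $\norma{\div(c-\tilde c)}_{\L\infty}$ directly by $\norma{W}_{\L1}$ thanks to the non-local structure; and your identification of the $\BV$ bound on $u$ and the boundary behaviour of the hyperbolic part as the delicate points matches the paper's Lemmas~\ref{lem:tv} and~\ref{lem:c}.

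A secondary, methodological difference: for the parabolic a priori bounds you test against $\sgn w$ (Kato's inequality) and use the maximum principle, whereas the paper works entirely with the Duhamel representation through the Dirichlet Green function and the Gaussian bound $0\le G\le H_\mu$. Your route is legitimate in spirit, but since solutions are only $\L1$-mild in the sense of Definition~\ref{def:Para}, the formal computation with $\sgn w$ needs an approximation argument to be rigorous; the Green-function route avoids this and simultaneously delivers the time continuity and the $\BV$ bound~\eqref{eq:37} on $w$, which you need anyway to control $\tv(A_i(t))$ and hence to propagate the $\BV$ bound on $u$ used in the velocity-stability step.
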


\noindent The proof is deferred to Section~\ref{sec:proofs}.

The lower semicontinuity of the total variation with respect to the
$\L1$ distance ensures moreover that bounds on the total variation of
the solution can be obtained by means of~\eqref{eq:w_TV}
and~\eqref{eq:u_TV}.

\section{Proofs}
\label{sec:proofs}

In the proofs below, we provide all details wherever necessary and
precise references for those part that differ only slightly from the
cases under consideration.

\subsection{Parabolic Estimates}
\label{sec:parabolicDirichlet}

Fix $T,\mu >0$ and let $\Omega$ satisfy~\ref{it:omega}. This paragraph
is devoted to the IBVP
\begin{equation}
  \label{eq:2}
  \left\{
    \begin{array}{l@{\qquad}r@{\,}c@{\,}l}
      \partial_t w = \mu \, \Delta w + B (t,x) \, w + b (t,x)
      & (t,x)
      & \in
      & [0,T] \times \Omega
      \\
      w (t,\xi) = 0
      & (t,\xi)
      & \in
      & [0,T] \times \partial\Omega
      \\ w (0,x) =
      w_o (x)
      & x
      & \in
      & \Omega\,.
    \end{array}
  \right.
\end{equation}

The following definition is adapted from~\cite{QuittnerSouplet}, see
Remark~\ref{rem:l1delta}.

\begin{definition}
  \label{def:Para}
  A map $w \in \C0 ([0,T]; \L1 (\Omega{;\reali}))$ is a solution
  to~\eqref{eq:2} if $w (0) = w_o$ and for all test functions
  $\phi \in \C2 ([0,T]\times\overline{\Omega};\reali)$ such that
  $\phi (T,x) =0$ for all $x \in \Omega$ and $\phi (t,\xi) = 0$ for
  all $(t, \xi) \in [0,T] \times \partial\Omega$:
  \begin{equation}
    \label{eq:4}
    \begin{array}{@{}r@{}}
      \displaystyle
      \int_0^T \int_\Omega \left(
      w (t,x) \, \partial_t \phi (t,x)
      + \mu \, w (t,x) \, \Delta\phi (t,x)
      + \left(B (t,x) \, w (t,x) + b (t,x)\right) \phi (t,x)
      \right)
      \d{x} \d{t }
      \\
      \displaystyle
      + \int_\Omega w_o (x) \, \phi (0,x) \d{x}
      =
      0 \,.
    \end{array}
  \end{equation}
\end{definition}

\begin{remark}
  \label{rem:l1delta}
  Let
  $d (x, \partial\Omega) = \inf_{y \in \partial\Omega} \norma{x-y}$.
  Recall
  $\norma{w}_{\L1_\delta (\Omega;\reali)} = \int_\Omega \modulo{u (x)}
  \, d (x,\partial\Omega) \d{x}$
  from~\cite[Appendix~B]{QuittnerSouplet}. Since
  $\norma{w}_{\L1_\delta (\Omega;\reali)} \leq \O \norma{w}_{\L1
    (\Omega;\reali)}$, a solution in the sense of
  Definition~\ref{def:Para} is also a \emph{weak $\L1_\delta$
    solution} in the sense of~\cite[Definition~48.8,
  Appendix~B]{QuittnerSouplet}.
\end{remark}

\begin{remark}
  \label{rem:c2}
  In~\Cref{def:Para} it is sufficient to consider test functions
  $\phi \in \C1 ([0,T]\times\overline{\Omega}; \reali)$ such that for
  all $t \in [0,T]$, the map $x \mapsto \phi (t,x)$ is of class
  $\C2 (\overline{\Omega}; \reali)$ and moreover $\phi (T,x) =0$ for
  all $x \in \Omega$ and $\phi (t,\xi) = 0$ for all
  $(t,\xi) \in [0,T]\times \partial\Omega$. This is proved through a
  standard regularization by means of a convolution with a mollifier
  supported in $\reali_-$.
\end{remark}

For $\mu>0$, the heat kernel is denoted
$H_\mu (t,x) = (4 \, \pi \, \mu \, t)^{-n/2} \; \exp
\left(-\norma{x}^2 \middle/(4 \, \mu \, t)\right)$, where $t > 0$,
$x \in \reali^n$. As it is well known,
$\norma{H_\mu (t)}_{\L1 (\reali^n; \reali)} = 1$.

\begin{proposition}
  \label{prop:stimePara}
  Let $\Omega$ satisfy~\ref{it:omega} and fix $\mu>0$. Then, there
  exists a Green function
  \begin{displaymath}
    G \in
    \C\infty(\mathopen]0, +\infty\mathclose[ \times \mathopen]0, +\infty\mathclose[ \times
    \Omega \times \Omega; \reali_+)
    \cap
    \C0(\mathopen]0, +\infty\mathclose[ \times \mathopen]0, +\infty\mathclose[ \times
    \overline{\Omega} \times \overline{\Omega}; \reali_+)
  \end{displaymath}
  such that:
  \begin{enumerate}[label={\bf(G\arabic*)}]

  \item\label{item:K2} For all $t,\tau \in \reali_+$ and
    $x,y \in \Omega$, $G (t,\tau,x,y) = G (t,\tau,y,x)$.

  \item\label{item:K3} For all $t \in \reali_+$,
    $\xi \in \partial\Omega$ and $y \in \Omega$, $G (t,\tau,\xi,y)=0$.

  \item\label{item:K4} There exist positive constants $C,c$ such that
    for all $t \in \reali_+$ and for all $x,y \in \Omega$,
    \begin{align*}
      0  \le
      G (t,\tau,x,y)
      \le \
      &  H_\mu (t-\tau,x-y)
      \\
      \modulo{\partial_t G (t,\tau,x,y)}
      \le \
      & c \, (t-\tau)^{-(n+2)/2}
        \exp\left(-C \, \norma{x-y}^2 \middle/ (t-\tau)\right)
      \\
      \norma{\nabla_x G (t,\tau,x,y)}
      \le \
      & c \, (t-\tau)^{-(n+1)/2}
        \exp\left(-C \, \norma{x-y}^2 \middle/ (t-\tau)\right) \,.
    \end{align*}
  \item\label{item:K5} For all $b \in \L1 ([0,T]\times\Omega; \reali)$
    and all $w_o \in \L1 (\Omega;\R)$, the IBVP
    \begin{equation}
      \label{eq:3}
      \left\{
        \begin{array}{l@{\qquad}r@{\,}c@{\,}l}
          \partial_t w = \mu \, \Delta w + b (t,x)
          & (t,x)
          & \in
          & [0,T] \times  \Omega
          \\
          w (t,\xi) = 0
          & (t,\xi)
          & \in
          & [0,T] \times \partial\Omega
          \\ w (0,x) =
          w_o (x)
          & x
          & \in
          & \Omega
        \end{array}
      \right.
    \end{equation}
    admits a unique solution in the sense of~\Cref{def:Para}, which is
    \begin{equation}
      \label{eq:7}
      w (t,x)
      =
      \int_\Omega G (t,0,x,y) \, w_o (y) \d{y}
      +
      \int_0^t \int_\Omega G (t,\tau,x,y) \, b (\tau,y) \d{y} \d{\tau}\,.
    \end{equation}
  \end{enumerate}
\end{proposition}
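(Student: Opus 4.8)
The plan is to invoke the classical theory of parabolic Green functions for second-order operators on smooth bounded domains and then match it against the $\L1$-based notion of solution in Definition~\ref{def:Para}. The operator here is simply $\partial_t - \mu\Delta$ with homogeneous Dirichlet conditions on $\partial\Omega$, which under hypothesis~\ref{it:omega} falls squarely within the setting of~\cite{FriedmanBook} or~\cite{MR0241822}: there is a fundamental solution $G(t,\tau,x,y)$ for $t>\tau$, smooth in the interior, continuous up to the parabolic boundary, vanishing when $x\in\partial\Omega$, and satisfying the Gaussian-type bounds and their derivative analogues stated in~\ref{item:K4}. These pointwise bounds are exactly the ``Aronson estimates'' together with the standard interior gradient and time-derivative estimates (obtained from parabolic regularity applied on parabolic cylinders whose size is comparable to $\sqrt{t-\tau}$), and the comparison $0\le G\le H_\mu$ is the maximum principle comparing the Green function of $\Omega$ with that of $\reali^n$. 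Symmetry~\ref{item:K2} follows from self-adjointness of $-\mu\Delta$ under Dirichlet conditions, i.e.\ $G(t,\tau,x,y)=G(t-\tau,0,x,y)$ depends only on $t-\tau$ by autonomy, and the spatial symmetry is the standard duality/adjoint argument.

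The substantive part is~\ref{item:K5}: that formula~\eqref{eq:7} defines a function in $\C0([0,T];\L1(\Omega;\reali))$ and is the \emph{unique} solution in the sense of Definition~\ref{def:Para}. For the regularity and the $\L1$ bounds, I would use $\int_\Omega G(t,\tau,x,y)\dd{x}\le\int_{\reali^n}H_\mu(t-\tau,x-y)\dd{x}=1$ (from~\ref{item:K4}) together with Fubini to get $\norma{w(t)}_{\L1}\le\norma{w_o}_{\L1}+\norma{b}_{\L1([0,t]\times\Omega)}$, and continuity in time by dominated convergence plus the strong continuity of the heat semigroup on $\L1$ (approximating $w_o$ and $b$ by smooth data, for which continuity is classical, then passing to the limit using the uniform operator bound). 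That $w$ given by~\eqref{eq:7} satisfies the weak identity~\eqref{eq:4} is a direct computation: plug in a test function $\phi$ as in Definition~\ref{def:Para}, use Fubini, and exploit that for fixed $(\tau,y)$ the map $(t,x)\mapsto G(t,\tau,x,y)$ solves the adjoint-compatible homogeneous equation with the Dirichlet condition, so that $\int_\tau^T\int_\Omega G(\partial_t\phi+\mu\Delta\phi)\dd{x}\dd{t}=-\phi(\tau,y)$ after an integration by parts in which every boundary term drops (the spatial boundary term because $\phi$ and $G$ both vanish on $\partial\Omega$, the terminal term because $\phi(T,\cdot)=0$, the initial-time term producing the delta via $G(t,\tau,\cdot,\cdot)\to\delta_y$ as $t\downarrow\tau$). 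Summing the $w_o$-contribution and the $b$-contribution then reproduces~\eqref{eq:4} exactly.

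For uniqueness I would argue by duality in the standard way: if $w_1,w_2$ are two solutions, their difference $z=w_1-w_2\in\C0([0,T];\L1(\Omega))$ satisfies $z(0)=0$ and $\int_0^T\int_\Omega z\,(\partial_t\phi+\mu\Delta\phi)\dd{x}\dd{t}=0$ for all admissible $\phi$. Given any $\psi\in\Cc\infty(\mathopen]0,T\mathclose[\times\Omega)$, solve the backward Dirichlet problem $\partial_t\phi+\mu\Delta\phi=-\psi$ with $\phi(T,\cdot)=0$, $\phi|_{\partial\Omega}=0$; classical parabolic theory gives a $\C2$ solution that is an admissible test function, and then $\int_0^T\int_\Omega z\,\psi=0$ for all such $\psi$, forcing $z\equiv0$. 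The one technical point to be careful about is that the test class in Definition~\ref{def:Para} asks for $\C2([0,T]\times\overline\Omega)$, and the solution of the backward problem has exactly this regularity under~\ref{it:omega} (indeed better, by Schauder estimates using the $\C{2,\gamma}$ boundary), so the duality closes.

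The main obstacle is bookkeeping rather than conceptual: assembling the precise Green-function estimates in~\ref{item:K4} from the literature in the stated form (in particular the $(t-\tau)^{-(n+2)/2}$ and $(t-\tau)^{-(n+1)/2}$ weights with Gaussian tails, uniformly up to $\overline\Omega$) and verifying that the weak formulation~\eqref{eq:4} — which is phrased with $\mu\,w\,\Delta\phi$ rather than the more common $\mu\,\nabla w\cdot\nabla\phi$, hence requires no spatial regularity of $w$ — is genuinely equivalent to the representation~\eqref{eq:7}. Once the integration-by-parts identity $\int_\tau^T\int_\Omega G(t,\tau,x,y)\bigl(\partial_t\phi(t,x)+\mu\Delta\phi(t,x)\bigr)\dd{x}\dd{t}=-\phi(\tau,y)$ is justified (the only delicate passage, handled by cutting off near $t=\tau$ and using $G(t,\tau,\cdot,y)\wto\delta_y$ together with the derivative bounds to control the error terms), everything else is routine application of Fubini and dominated convergence.
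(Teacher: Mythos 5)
Your proposal is correct, and for items~\ref{item:K2}--\ref{item:K4} it coincides with the paper's proof, which likewise just points to the classical literature (Ladyzhenskaya--Solonnikov--Uraltseva for the boundary behaviour and the derivative bounds, Quittner--Souplet for the symmetry and the comparison $0\le G\le H_\mu$). Where you genuinely diverge is item~\ref{item:K5}. The paper does not verify the weak identity~\eqref{eq:4} directly: it observes (Remark~\ref{rem:l1delta}) that a solution in the sense of \Cref{def:Para} is a weak $\L1_\delta$ solution, invokes the Quittner--Souplet equivalence results (Proposition~48.9, Corollary~48.10) together with the maximum principle to get the equivalence of~\eqref{eq:4} and~\eqref{eq:7} for \emph{signed} data ($w_o\ge0$, $b\ge0$ or both $\le 0$), and then concludes for general data by linearity -- uniqueness included, since the zero function is both nonnegative and nonpositive. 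Your route is more self-contained: you verify~\eqref{eq:4} from~\eqref{eq:7} by Fubini and the integration-by-parts identity $\int_\tau^T\int_\Omega G\,(\partial_t\phi+\mu\Delta\phi)\,\d{x}\d{t}=-\phi(\tau,y)$ (all boundary terms drop because both $G$ and $\phi$ vanish on $\partial\Omega$ and $\phi(T,\cdot)=0$), and you prove uniqueness by the backward-dual problem -- which is in fact the same duality argument the paper deploys later, in Claim~1 of the proof of \Cref{prop:superStimePara}, for the equation with the zero-order term $B$. The trade-off: the paper's sign-splitting argument outsources the delicate limit $G(t,\tau,\cdot,y)\to\delta_y$ and the justification of the formal integration by parts to the cited monograph, at the cost of an extra reduction step; your computation makes the mechanism explicit but must itself justify the approximate-identity limit near $t=\tau$ and the Schauder regularity of the dual test function (which the $\C{2,\gamma}$ boundary in~\ref{it:omega} and the compact support of $\psi$ do provide). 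Both arguments close; neither has a gap.
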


\noindent The Green function depends both on $\mu$ and on $\Omega$
but, for simplicity, we omit this dependence.

\begin{proofof}{\Cref{prop:stimePara}}
  Condition~\ref{item:K2} follows from~\cite[Appendix~B,
  \S~48.2]{QuittnerSouplet}.  Property~\ref{item:K3} comes
  from~\cite[Chapter~IV, \S~16, (16.7)--(16.8) p.~408]{MR0241822}.

  The first bound in~\ref{item:K4} follows from~\cite[Formula~(48.4),
  p.440]{QuittnerSouplet},
  the second and the third one from~\cite[Chapter~IV, \S~16,
  Theorem~16.3, p.~413]{MR0241822}.

  To prove~\ref{item:K5}, use Remark~\ref{rem:l1delta}
  and~\cite[Proposition~48.9, Appendix~B]{QuittnerSouplet},
  \cite[Corollary~48.10, Appendix~B]{QuittnerSouplet} and the Maximum
  Principle~\cite[Proposition~52.7, Appendix~B]{QuittnerSouplet},
  which ensure the equivalence between~\eqref{eq:4} and~\eqref{eq:7}
  as soon as either $w_o \geq 0$, $b \geq 0$ or $w_o \leq 0$,
  $b \leq 0$. The linearity of~\eqref{eq:3} allows to complete the
  proof.
\end{proofof}

\begin{proposition}
  \label{prop:superStimePara}
  Let $\Omega$ satisfy~\ref{it:omega}, fix $\mu>0$ and let
  \begin{enumerate}[label={\bf(P\arabic*)}]
  \item\label{it:P1} $w_o \in \L\infty (\Omega;\reali)$,
  \item\label{it:P2} $B \in \L\infty ([0,T] \times \Omega;\reali)$,
  \item\label{it:P3} $b \in \L1 ([0,T]; \L\infty( \Omega;\reali))$.
  \end{enumerate}
  Then:
  \begin{enumerate}[label={\bf(\arabic*)}]

  \item\label{it:P_existence} Problem~\eqref{eq:2} admits a unique
    solution in the sense of~\Cref{def:Para}.

  \item\label{it:P_formula} The solution to~\eqref{eq:2} is implicitly
    given by
    \begin{equation}
      \label{eq:5}
      \begin{array}{rcl}
        \displaystyle
        w (t,x)
        & =
        & \displaystyle
          \int_\Omega G (t,0,x,y) \, w_o (y) \d{y}
        \\
        &
        & \displaystyle
          \qquad +
          \int_0^t \int_\Omega G (t,\tau,x,y)
          \left(B (\tau,y) \, w (\tau,y) + b (\tau,y)\right)
          \d{y} \d\tau
      \end{array}
    \end{equation}
    where $G$, independent of $b$ and $B$, is defined
    in~\Cref{prop:stimePara}.

  \item\label{it:P_apriori} The following \emph{a priori} bounds hold
    for all $t \in [0,T]$
    \begin{align}
      \label{eq:11}
      \norma{w (t)}_{\L1 (\Omega;\reali)}
      \leq
      & \left(
        \norma{w_o}_{\L1 (\Omega;\reali)}
        +
        \norma{b}_{\L1 ([0,t]\times \Omega;\reali)}
        \right)
        \exp \int_0^t \norma{B(\tau)}_{\L\infty (\Omega;\reali)}
        \d\tau,
      \\
      \label{eq:32}
      \norma{w (t)}_{\L\infty (\Omega;\reali)}
      \leq
      & \left(
        \norma{w_o}_{\L\infty (\Omega;\reali)}
        +
        \norma{b}_{\L1 ([0,t]; \L\infty (\Omega;\reali))}
        \right)
        \exp \int_0^t \norma{B(\tau)}_{\L\infty (\Omega;\reali)}
        \d\tau \,.
    \end{align}

  \item\label{it:P_stability} If $w_1$, $w_2$ solve~\eqref{eq:2} with
    data $w_o^1$, $w_o^2$ satisfying~\ref{it:P1}, functions $B_1$,
    $B_2$ satisfying~\ref{it:P2} and functions $b_1$, $b_2$
    satisfying~\ref{it:P3}, then
    \begin{equation}
      \label{eq:10}
      \begin{array}{rcl}
        &
        & \displaystyle
          \norma{w_1 (t) -w_2 (t)}_{\L1 (\Omega;\reali)}
        \\
        & \leq
        & \displaystyle
          \left(
          \norma{w_o^1- w_o^2}_{\L1 (\Omega;\reali)}
          +
          \norma{b_1 - b_2}_{\L1 ([0,t]\times\Omega;\reali)}
          \right)
          \exp \int_0^t \norma{B_1(\tau)}_{\L\infty (\Omega;\reali)}
          \d\tau
        \\
        &
        & \displaystyle
          +
          \norma{B_1 - B_2}_{\L1 ([0,t]\times\Omega;\reali)}
          \left(
          \norma{w_o^2}_{\L\infty (\Omega;\reali)}
          +
          \norma{b_2}_{\L1 ([0,t]; \L\infty (\Omega;\reali))}
          \right)
        \\
        &
        & \displaystyle
          \qquad
          \times \exp \int_0^t
          \left(
          \norma{B_1(\tau)}_{\L\infty (\Omega;\reali)}
          +
          \norma{B_2(\tau)}_{\L\infty (\Omega;\reali)}\right)
          \d\tau \,.
      \end{array}
    \end{equation}

  \item\label{it:P_positivity} Positivity: if $b \ge 0$ and
    $w_o \ge 0$ then $w \ge0$.

  \item \label{it:P_tv} If $w_o\in \BV (\Omega; \R)$ and
    $b(t) \in \BV (\Omega; \R)$ for all $t \in [0,T]$, then for all
    $t \in [0,T]$ the following estimate holds.
    \begin{align}
      \nonumber
      & \tv \left(w(t)\right)
      \\       \label{eq:37}
      \le \ & \tv (w_o) + \int_0^t \tv\left(b
              (\tau)\right) \dd\tau
      \\ \nonumber
      & + \mathcal{O} (1) \, \sqrt{t}\, \norma{B}_{\L\infty
        ([0,t]\times \Omega; \R)} \left( \norma{w_o}_{\L1 (\Omega; \R)} {+}
        \norma{b}_{\L1 ([0,t]\times\Omega; \R)} \right)
        \exp \int_0^t \norma{B(\tau)}_{\L\infty
        (\Omega;R)} \dd\tau.
    \end{align}
  \end{enumerate}
\end{proposition}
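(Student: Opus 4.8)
The plan is to reduce all seven claims to the representation formula~\eqref{eq:5}, treating the zeroth order term $B\,w$ as a source and exploiting the Green function $G$ furnished by~\Cref{prop:stimePara}. Throughout I would use the two bounds $\norma{\int_\Omega G(t,\tau,\cdot,y)\,f(y)\dd{y}}_{\L1(\Omega;\R)}\le\norma{f}_{\L1(\Omega;\R)}$ and $\norma{\int_\Omega G(t,\tau,\cdot,y)\,f(y)\dd{y}}_{\L\infty(\Omega;\R)}\le\norma{f}_{\L\infty(\Omega;\R)}$, valid for $0\le\tau<t$ and any $f$: the former is immediate from $0\le G(t,\tau,x,y)\le H_\mu(t-\tau,x-y)$ in~\ref{item:K4}, from $\norma{H_\mu(s)}_{\L1(\R^n;\R)}=1$ and from Fubini--Tonelli; the latter, in addition, from $\int_\Omega G(t,\tau,x,y)\dd{y}\le\int_{\R^n}H_\mu(t-\tau,x-y)\dd{y}=1$.

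For~\ref{it:P_existence}--\ref{it:P_formula} the plan is a Banach fixed point argument in $\C0([0,T];\L1(\Omega;\R))$ for the map $\mathcal{T}$ sending $w$ to the right hand side of~\eqref{eq:5}. Once $w\in\C0([0,T];\L1(\Omega;\R))$, we have $B\,w+b\in\L1([0,T]\times\Omega;\R)$ by~\ref{it:P2}--\ref{it:P3}, so by~\ref{item:K5} the map $\mathcal{T}$ takes values in $\C0([0,T];\L1(\Omega;\R))$, a fixed point of $\mathcal{T}$ is precisely a solution of~\eqref{eq:2} in the sense of~\Cref{def:Para}, and conversely — again by the uniqueness part of~\ref{item:K5}, applied to the linear problem with source $B\,w+b$ — any such solution satisfies~\eqref{eq:5}. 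The first bound above yields the Lipschitz estimate $\norma{\mathcal{T}w_1(t)-\mathcal{T}w_2(t)}_{\L1(\Omega;\R)}\le\int_0^t\norma{B(\tau)}_{\L\infty(\Omega;\R)}\,\norma{w_1(\tau)-w_2(\tau)}_{\L1(\Omega;\R)}\dd{\tau}$; endowing $\C0([0,T];\L1(\Omega;\R))$ with the equivalent norm $\sup_{t\in[0,T]}e^{-\lambda t}\norma{w(t)}_{\L1(\Omega;\R)}$ for $\lambda$ large enough (depending on $\norma{B}_{\L\infty([0,T]\times\Omega;\R)}$) turns $\mathcal{T}$ into a contraction on the whole of $[0,T]$, giving existence, uniqueness and~\eqref{eq:5} at once.

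The \emph{a priori} bounds~\ref{it:P_apriori} then follow by taking the $\L1$, respectively the $\L\infty$, norm of~\eqref{eq:5}, using the two bounds above and Gr\"onwall's inequality, which gives~\eqref{eq:11} and~\eqref{eq:32}. For the stability estimate~\ref{it:P_stability} I would subtract the two representations, write $B_1w_1-B_2w_2=B_1\,(w_1-w_2)+(B_1-B_2)\,w_2$, bound $\norma{(B_1-B_2)(\tau)\,w_2(\tau)}_{\L1(\Omega;\R)}\le\norma{(B_1-B_2)(\tau)}_{\L1(\Omega;\R)}\,\norma{w_2(\tau)}_{\L\infty(\Omega;\R)}$ and insert~\eqref{eq:32} for $w_2$, then close by Gr\"onwall, obtaining~\eqref{eq:10}. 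For positivity~\ref{it:P_positivity} I would fix $\Lambda\ge\norma{B}_{\L\infty([0,T]\times\Omega;\R)}$ and note that $z:=e^{\Lambda t}w$ solves the same problem with reaction coefficient $B+\Lambda\ge0$ and nonnegative source $e^{\Lambda t}b$; by~\ref{it:P_formula}, $z$ is the fixed point of the corresponding map, obtained as the $\C0([0,T];\L1(\Omega;\R))$ limit of the Picard iterates started from $0$; these iterates are nonnegative because $G\ge0$, $w_o\ge0$, $B+\Lambda\ge0$ and $b\ge0$, hence $z\ge0$ and $w=e^{-\Lambda t}z\ge0$.

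It remains to prove the total variation bound~\ref{it:P_tv}. I would split~\eqref{eq:5} as $w=\bar w+w^\sharp$, where $\bar w(t,x)=\int_\Omega G(t,0,x,y)\,w_o(y)\dd{y}+\int_0^t\int_\Omega G(t,\tau,x,y)\,b(\tau,y)\dd{y}\dd{\tau}$ is the solution of~\eqref{eq:3} and $w^\sharp$ carries the source $B\,w$. For $w^\sharp$ I would differentiate in $x$ under the integral sign and use the gradient estimate in~\ref{item:K4}, which after integrating the Gaussian gives $\int_\Omega\norma{\grad_x G(t,\tau,x,y)}\dd{x}\le\O\,(t-\tau)^{-1/2}$; hence, by Tonelli, $\tv\left(w^\sharp(t)\right)\le\O\int_0^t(t-\tau)^{-1/2}\norma{B(\tau)}_{\L\infty(\Omega;\R)}\,\norma{w(\tau)}_{\L1(\Omega;\R)}\dd{\tau}$, and inserting~\eqref{eq:11} and $\int_0^t(t-\tau)^{-1/2}\dd{\tau}=2\sqrt t$ produces exactly the last summand in~\eqref{eq:37}. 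For $\bar w$ the plan is to transfer the $x$-derivative of $G$ onto the data, i.e.\ to integrate by parts so as to move $\grad$ onto $w_o$ and onto $b(\tau)$, the boundary contributions vanishing by~\ref{item:K3}, and then to apply the $\L1$ bound above to the distributional gradients of $w_o$ and $b(\tau)$, obtaining $\tv\left(\bar w(t)\right)\le\tv(w_o)+\int_0^t\tv\left(b(\tau)\right)\dd{\tau}$; summing the two contributions gives~\eqref{eq:37}. I expect this last step — carrying out rigorously, on the bounded domain $\Omega$, the transfer of the spatial derivative from $G$ to the data and the vanishing of the boundary terms — to be the main technical obstacle, the rest reducing to the representation formula~\eqref{eq:5} and Gr\"onwall's inequality.
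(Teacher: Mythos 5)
Your proposal is correct, and for items~\ref{it:P_existence}, \ref{it:P_formula} and~\ref{it:P_positivity} it takes a genuinely different route from the paper. The paper does not use a fixed point: it first proves uniqueness by a duality argument (testing the difference of two solutions against the strong solution of the backward adjoint problem), then obtains existence by regularizing $w_o$, $B$, $b$ to continuous data, invoking the classical solvability theory of~\cite{MR0241822} to get classical solutions $w^\nu$ satisfying~\eqref{eq:8}, showing these form a Cauchy sequence via the stability estimate~\eqref{eq:10}, and passing to the limit; the $\C0([0,T];\L1)$ regularity is proved separately from the $\partial_t G$ bound in~\ref{item:K4}. Your contraction argument on the integral equation, combined with the observation that~\Cref{def:Para} for~\eqref{eq:2} is literally~\Cref{def:Para} for~\eqref{eq:3} with source $B\,w+b$ --- so that~\ref{item:K5} gives the equivalence between solutions and fixed points of $\mathcal{T}$ --- is shorter and leans only on~\Cref{prop:stimePara}, at the price of not producing the classical approximants that the paper reuses for positivity and for the regularity remarks {\bf(7)}--{\bf(8)}. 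For positivity the paper again goes through regular data and the classical maximum principle, whereas your shift $z=e^{\Lambda t}w$ plus nonnegativity of the Picard iterates is self-contained and equally valid (the weak formulation for $z$ is checked by inserting $\phi=e^{\Lambda t}\psi$ into~\eqref{eq:4}). Items~\ref{it:P_apriori}, \ref{it:P_stability} and~\ref{it:P_tv} coincide with the paper's Claims~2, 3 and~7; for~\ref{it:P_tv} note that transferring the $x$-derivative onto $w_o$ and $b(\tau)$ requires first converting $\grad_x G$ into a $y$-derivative via the symmetry~\ref{item:K2} before integrating by parts (the boundary term then vanishes by~\ref{item:K3}), and that for genuinely $\BV$ data the cleanest implementation is the paper's: mollify $w_o$ and $b$, get the bound for the smooth approximants, and conclude by lower semicontinuity of the total variation under $\L1$ convergence.
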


\noindent We note, for completeness, that in the setting of
Proposition~\ref{prop:superStimePara} the following regularity results
-- not of use in the sequel -- can also be obtained:
\begin{description}
\item[(7)] If $w_o\in \Cc1 (\Omega; \R)$ then the solution $w$ is such
  that $w(t) \in \C1 (\Omega; \R)$ for all $t \in [0,T]$.

\item[(8)] The solution $w$ is H\"older continuous in time.
\end{description}
\begin{proofof}{\Cref{prop:superStimePara}}
  We split the proof in a few steps.

  \paragraph{Claim~1: Problem~\eqref{eq:2} admits at most one solution
    in the sense of~\Cref{def:Para}.}

  Observe that if $w_1$, $w_2$ solve~\eqref{eq:2} in the sense
  of~\Cref{def:Para}, then their difference satisfies
  $\int_0^T \int_\Omega (w_2-w_1) \, (\partial_t \phi + \mu \, \Delta
  \phi + B \, \phi) \d{x} \d{t} = 0$, for all $\phi$ as regular as
  specified in~\Cref{rem:c2}. By~\cite[(ii) in~Theorem~48.2,
  Appendix~B]{QuittnerSouplet}, we choose as $\phi$ the strong
  solution to
  \begin{displaymath}
    \left\{
      \begin{array}{l@{\qquad}r@{\,}c@{\,}l}
        \partial_t \phi + \mu \, \Delta\phi + B (t,x) \, \phi = f
        & (t,x)
        & \in
        & [0,T] \times \Omega
        \\
        \phi (t,\xi) = 0
        & (t,\xi)
        & \in
        & [0,T] \times \partial\Omega
        \\
        \phi (T,x) = 0
        & x
        & \in
        & \Omega
      \end{array}
    \right.
  \end{displaymath}
  where $f \in \C0 ([0,T] \times \overline{\Omega};\reali)$. We thus
  have $\int_0^T \int_\Omega (w_2-w_1) \, f \d{x} \d{t} = 0$, so that,
  by the arbitrariness of $f$, $w_1=w_2$.

  \paragraph{Claim~2: If $w \in \L\infty ([0,T]; \L1 (\Omega;\reali))$
  satisfies~\eqref{eq:5}, then~\eqref{eq:11} and~\eqref{eq:32} hold.}
Consider first~\eqref{eq:11}. By~\ref{item:K4}, recalling
$\norma{H_\mu (t)}_{\L1 (\Omega; \reali)} \leq 1$, we have
\begin{align*}
  \norma{w (t)}_{\L1 (\Omega; \reali)}
  \le \
  & \int_\Omega \int_\Omega
    G(t,0,x,y) \modulo{w_o(y)}\d{y} \d{x}
  \\
  & + \int_\Omega \int_0^t \int_\Omega
    G(t,\tau,x,y) \modulo{B (\tau,y) \, w (\tau,y) + b (\tau,y)}
    \d{y} \dd\tau \d{x}
  \\
  \le \
  & \int_\Omega \int_\Omega
    H_\mu(t,x-y) \modulo{w_o(y)} \d{y} \d{x}
  \\
  & + \int_\Omega \int_0^t \int_\Omega
    H_\mu(t-\tau,x-y) \modulo{B (\tau,y) \, w (\tau,y) + b (\tau,y)}
    \d{y} \d\tau \d{x}
  \\
  \le \
  & \norma{w_o}_{\L1 (\Omega;\reali)}
    +
    \int_0^t \norma{B(\tau)}_{\L\infty (\Omega;\reali)} \,
    \norma{w (\tau)}_{\L1 (\Omega;\reali)}  \dd\tau
    + \norma{b}_{\L1 ([0,t] \times \Omega;\reali)} \,.
\end{align*}
An application of Gronwall Lemma~\cite[Lemma~3.1]{bressan-piccoli}
yields~\eqref{eq:11}. The proof of~\eqref{eq:32} is entirely similar.

\paragraph{Claim~3: If
  $w_1, w_2 \in \L\infty ([0,T]; \L1 (\Omega;\reali))$
  satisfy~\eqref{eq:5}, then~\eqref{eq:10} holds.}

Note that
\begin{align*}
  w_1 (t,x) - w_2 (t,x)
  =\
  &   \int_\Omega G (t,0,x,y) \, \left(w_o^1 (y)- w_o^2 (x) \right)\d{y}
  \\
  &  \qquad +
    \int_0^t \int_\Omega G (t,\tau,x,y)
    \left(B_1 (\tau,y) \, w_1 (\tau,y) - B_2 (\tau,y) \, w_2 (\tau,y) \right)
    \d{y} \d\tau
  \\
  & \qquad +
    \int_0^t \int_\Omega G (t,\tau,x,y)
    \left(b_1 (t,y) - b_2 (t,y) \right)
    \d{y} \d\tau
  \\
  = \
  & \int_\Omega G (t,0,x,y) \, \left(w_o^1 (y)- w_o^2 (x) \right)\d{y}
  \\
  &   \qquad +
    \int_0^t \int_\Omega G (t,\tau,x,y)
    B_1 (\tau,y) \, \left(w_1 (\tau,y) - w_2 (\tau,y) \right)
    \d{y} \d\tau
  \\
  &   \qquad +
    \int_0^t \int_\Omega G (t,\tau,x,y) \, \tilde b (t,y)
    \d{y} \d\tau,
\end{align*}
where
$\tilde b (t,x) = \left(B_1 (t,x) - B_2 (t,x)\right) \, w_2 (t,x) +
b_1 (t,x) - b_2 (t,x)$.  Proceeding as in the proof of {\bf Claim~2}
and exploiting~\eqref{eq:32}, we obtain
\begin{align*}
  & \norma{w_1 (t) - w_2 (t)}_{\L1 (\Omega; \reali)}
  \\
  \leq \
  & \left(
    \norma{w_o^1- w_o^2}_{\L1 (\Omega;\reali)}
    +
    \norma{\tilde b}_{\L1 ([0,t] \times \Omega;\reali)}
    \right)
    \exp \int_0^t \norma{B_1(\tau)}_{\L\infty (\Omega;\reali)}
    \d\tau
  \\
  \leq \
  & \left(
    \norma{w_o^1- w_o^2}_{\L1 (\Omega;\reali)}
    +
    \norma{B_1 - B_2}_{\L1 ([0,t]\times\Omega;\reali)}
    \norma{w_2}_{\L\infty ([0,t]\times\Omega;\reali)}
    +
    \norma{b_1 - b_2}_{\L1 ([0,t]\times\Omega;\reali)}
    \right)
  \\
  & \quad \times
    \exp \int_0^t \norma{B_1(\tau)}_{\L\infty (\Omega;\reali)}
    \d\tau
  \\
  \leq \
  & \left(
    \norma{w_o^1- w_o^2}_{\L1 (\Omega;\reali)}
    +
    \norma{b_1 - b_2}_{\L1 ([0,t]\times\Omega;\reali)}
    \right)
    \exp \int_0^t \norma{B_1(\tau)}_{\L\infty (\Omega;\reali)}
    \d\tau
  \\
  & \quad +
    \norma{B_1 - B_2}_{\L1 ([0,t]\times\Omega;\reali)}
    \left(
    \norma{w_o^2}_{\L\infty (\Omega;\reali)}
    +
    \norma{b_2}_{\L1 ([0,t]; \L\infty (\Omega;\reali))}
    \right)
  \\
  & \qquad
    \times \exp \int_0^t
    \left(
    \norma{B_1(\tau)}_{\L\infty (\Omega;\reali)}
    +
    \norma{B_2(\tau)}_{\L\infty (\Omega;\reali)}\right)
    \d\tau \,.
\end{align*}

\paragraph{Claim~4: If $w \in \L\infty ([0,T]; \L1 (\Omega;\reali))$
  satisfies~\eqref{eq:5}, then
  $w \in \C0([0,T]; \L1 (\Omega;\reali))$.}

Introduce the abbreviation
$\tilde{b} (t,x) = B (t,x) \, w (t,x) + b (t,x)$ so that,
using~\eqref{eq:11},
\begin{align*}
  \norma{\tilde{b} (t)}_{\L1(\Omega; \reali)}
  \leq \
  & \norma{B (t)}_{\L\infty (\Omega;\reali)} \,
    \norma{w (t)}_{\L1 (\Omega;\reali)}
    +
    \norma{b}_{\L1 (\Omega;\reali)}
  \\
  \leq \
  & \norma{B (t)}_{\L\infty (\Omega;\reali)} \,
    \left(
    \norma{w_o}_{\L1 (\Omega;\reali)}
    +
    \norma{b}_{\L1 ([0,t] \times \Omega;\reali)}
    \right)
    \exp  \int_0^t \norma{B(\tau)}_{\L\infty (\Omega;\reali)} \d\tau
  \\
  &
    +
    \norma{b (t)}_{\L1 (\Omega;\reali)}
\end{align*}
and
$\norma{\tilde{b}}_{\L\infty ([0,t]; \L1 (\Omega;\reali))} \leq \O$.
Compute, using~\ref{item:K4}, for $t_2 > t_1 >0$ and
$s,\sigma \in\mathopen]t_1, t_2\mathclose[$,
\begin{align*}
  & \norma{w (t_2)-w (t_1)}_{\L1 (\Omega;\reali)}
  \\
  \le \
  & \int_\Omega \int_\Omega \modulo{G (t_2,0,x,y) - G (t_1,0,x,y)} \modulo{w_o (y)} \d{y} \d{x}
  \\
  & + \int_0^{t_1} \int_{\Omega} \int_{\Omega}
    \modulo{G (t_2,\tau,x,y) - G (t_1,\tau,x,y)} \modulo{\tilde{b} (\tau,y)}
    \d{y} \d{x} \d{\tau}
  \\
  & +
    \int_{t_1}^{t_2}  \int_{\Omega} \int_{\Omega}
    \modulo{G (t_2,\tau,x,y)} \modulo{\tilde{b} (\tau,y)}
    \d{y} \d{x} \d{\tau}
  \\
  \le \
  & \dfrac{c\, \modulo{t_2-t_1}}{s^{1+n/2}}
    \int_\Omega \int_\Omega \modulo{w_o (y)}
    \exp\left(-C \, \norma{x-y}^2 \middle/ s\right) \d{y} \d{x}
  \\
  & +
    \dfrac{c\, \modulo{t_2-t_1}}{\sigma^{1+n/2}}
    \int_{0}^{t_1} \int_\Omega \int_\Omega
    \modulo{\tilde{b} (t,x)} \,
    \exp\left(-C \, \norma{x-y}^2 \middle/ \sigma\right)
    \d{y} \d{x} \d{\tau}
  \\
  & +
    c \int_{t_1}^{t_2} (t_2-\tau)^{-n/2}
    \int_\Omega \int_\Omega
    \modulo{\tilde{b} (t,x)} \,
    \exp\left(-C \, \norma{x-y}^2
    \middle/ (t_2-\tau)\right)
    \d{y} \d{x} \d{\tau}
  \\
  \le \
  & \dfrac{c\, \modulo{t_2-t_1}}{s^{1+n/2}} \,
    \norma{w_o}_{\L1 (\Omega;\reali)}
    \int_\Omega \exp\left(-C \, \norma{x}^2 \middle/ s\right) \d{x}
  \\
  & + \dfrac{c\, \modulo{t_2-t_1}}{\sigma^{1+n/2}} \, t_1 \,
    \norma{\tilde{b}}_{\L\infty ([0,t_2]; \L1(\Omega; \reali))}
    \int_\Omega \exp\left(-C \, \norma{x}^2 \middle/ \sigma\right) \d{x}
  \\
  & +
    c \, \norma{\tilde{b}}_{\L\infty ([0,t_2]; \L1(\Omega; \reali))}
    \int_{t_1}^{t_2} (t_2-\tau)^{-n/2}
    \int_\Omega
    \exp\left(-C \, \norma{x}^2
    \middle/ (t_2-\tau)\right)
    \d{x}   \d{\tau}
  \\
  \le \
  & \dfrac{c\, \modulo{t_2-t_1}}{C^{n/2} \, s} \,
    \norma{w_o}_{\L1 (\Omega;\reali)} \,
    \int_{\reali^n} \exp\left(- \norma{x}^2 \right) \d{x}
  \\
  &+ \dfrac{c\, \modulo{t_2-t_1}}{C^{n/2}} \,
    \norma{\tilde{b}}_{\L\infty([0,t_2];\L1(\Omega; \reali))} \,
    \int_{\reali^n} \exp\left(- \norma{x}^2 \right) \d{x}
  \\
  & +
    \dfrac{c\, \modulo{t_2-t_1}}{C^{n/2}} \,
    \norma{\tilde{b}}_{\L\infty ([0,t_2]; \L1(\Omega; \reali))} \,
    \int_{\reali^n} \exp\left(- \norma{x}^2 \right) \d{x}
  \\
  \le \
  & \O  \left(1+\dfrac{1}{s}\right) \, \modulo{t_2-t_1} \,.
\end{align*}
Since $(t_2-t_1)/s \leq t_2/t_1 - 1 < \delta$ as soon as
$t_2 < t_1 +\delta$, the $\L1 (\Omega;\reali)$ continuity of $w$ is
proved.

\paragraph{Claim~5: There exists a solution to~\eqref{eq:2} in the
  sense of~\Cref{def:Para} satisfying~\eqref{eq:5}.} Assume first that
$w_o \in \C0 (\overline{\Omega}; \reali)$ with $w_o = $ on
$\partial\Omega$, $B \in \C0 ([0,T]\times\overline{\Omega}; \reali)$
and $b \in \C0 ([0,T]\times \overline{\Omega}; \reali)$.
From~\cite[Chapter~IV, \S~16]{MR0241822} we know that~\eqref{eq:2}
admits a classical solution, say $w$. Define
$\tilde{b} (t,x) = B (t,x) \, w (t,x) + b (t,x)$, so that $w$
satisfies~\eqref{eq:7} by~\ref{item:K5}
in~\Cref{prop:stimePara}. Hence, $w$ also satisfies~\eqref{eq:5}.

Under the weaker regularity~\ref{it:P1}, \ref{it:P2} and~\ref{it:P3},
introduce sequences $w_o^\nu \in \C0 (\overline{\Omega}; \reali)$ and
$B^\nu,b^\nu \in \C0 ([0,T]\times \overline{\Omega}; \reali)$
converging to $w_o$ in $\L1 (\Omega;\reali)$ and to $B,b$ in
$\L1 ([0,T] \times \Omega; \reali)$. Call $w^\nu$ the corresponding
classical solution to~\eqref{eq:2} which, by the paragraph above,
exists and satisfies
\begin{equation}
  \label{eq:8}
  w^\nu (t,x) =\!\! \int_\Omega\! G (t,0,x,y) w^\nu_o (y) \d{y}
  +
  \!\!\int_0^t\!\! \int_\Omega \!G (t,\tau,x,y)\!
  \left(B^\nu \!(\tau,y)  w^\nu (t,y)
    + b^\nu (t,y)\right)\!\d{y} \d{\tau}.
\end{equation}
Hence, by {\bf Claim 3}, $w^\nu$ and $w^{\nu+1}$
satisfy~\eqref{eq:10}, so that
\begin{align*}
  & \norma{w^{\nu+1} (t) -w^\nu (t)}_{\L1 (\Omega;\reali)}
  \\
  \le \
  & \left( \norma{w_o^{\nu+1} - w_o^\nu}_{\L1 (\Omega;\reali)}
    +
    \norma{b^{\nu+1} - b^\nu }_{\L1 ([0,t] \times \Omega;\reali)}
    \right)
  \\
  & \times
    \exp\left(\int_0^t
    \norma{B^{\nu} (\tau)}_{\L\infty (\Omega;\reali)}
    \d\tau
    \right)
  \\
  & +
    \norma{B^{\nu+1} - B^\nu}_{\L1 ([0,t]\times\Omega;\reali)}
    \left(
    \norma{w_o^{\nu+1}}_{\L\infty (\Omega;\reali)}
    +
    \norma{b^{\nu+1}}_{\L1 ([0,t]; \L\infty (\Omega;\reali))}
    \right)
  \\
  & \qquad
    \times \exp \int_0^t
    \left(
    \norma{B^\nu(\tau)}_{\L\infty (\Omega;\reali)}
    +
    \norma{B^{\nu+1}(\tau)}_{\L\infty (\Omega;\reali)}\right)
    \d\tau \,.
\end{align*}
By the hypotheses on the sequences $w_o^\nu$, $B^\nu$ and $b^\nu$,
$w^\nu $ is a Cauchy sequence in $\L1 ([0,T] \times \Omega; \reali)$
converging to a function $w$ in $\L1 ([0,T] \times \Omega; \reali)$.
Moreover, since, by~\eqref{eq:11},
\begin{displaymath}
  \norma{w^\nu (t)}_{\L1 (\Omega;\reali)}
  \leq
  \left(
    \norma{w_o^\nu}_{\L1 (\Omega;\reali)}
    +
    \norma{b^\nu}_{\L1 ([0,t] \times \Omega;\reali)}
  \right)
  \exp \int_0^t \norma{B^\nu(\tau)}_{\L\infty (\Omega;\reali)}
  \d\tau
\end{displaymath}
letting $\nu \to +\infty$, we also have
\begin{equation}
  \label{eq:56}
  \norma{w (t)}_{\L1 (\Omega;\reali)}
  \leq
  \left(
    \norma{w_o}_{\L1 (\Omega;\reali)}
    +
    \norma{b}_{\L1 ([0,t] \times \Omega;\reali)}
  \right)
  \exp \int_0^t \norma{B(\tau)}_{\L\infty (\Omega;\reali)}
  \d\tau
\end{equation}
and hence $w \in \L\infty ([0,T]; \L1 (\Omega;\reali))$.

Passing to the limit in~\eqref{eq:8}, by the Dominated Convergence
Theorem, we get that $w$ satisfies~\eqref{eq:5} for
a.e.~$x \in \Omega$.  Moreover, for any
$\phi \in \C2 ([0,T] \times \overline{\Omega}; \reali)$, a further
application of the Dominated Convergence Theorem allows to pass to the
limit $\nu \to +\infty$ in~\eqref{eq:4}, proving that $w$ satisfies
also~\eqref{eq:4}. The $\C0$ in time -- $\L1$ in space continuity
required by~\Cref{def:Para} is proved in \textbf{Claim~4}.

This completes the proof of~\ref{it:P_existence} and
proves~\ref{it:P_formula}. Then, \ref{it:P_apriori} follows
from~\eqref{eq:56} and~\ref{it:P_stability} is proved similarly, as in
\textbf{Claim~2}.

\paragraph{Claim~6: Positivity.}
As above, consider a more regular and non negative datum
$w_o \in \C0 (\overline{\Omega}; \reali_+)$ with $w_o = 0$ on
$\partial\Omega$, $B \in \C0 ([0,T]\times\overline{\Omega}; \reali)$
and a non negative
$b \in \C0 ([0,T]\times \overline{\Omega}; \reali_+)$.
From~\cite[Chapter~IV, \S~16]{MR0241822} we know that~\eqref{eq:2}
admits a classical solution, say $w$. By~\cite[Chapter~I, \S~2,
Theorem~2.1]{MR0241822}, we also know that $w \geq 0$. Continue as in
the proof of Claim~5 to obtain that in the general case the solution is point-wise almost everywhere limit of non negative classical solutions, completing the proof of~\ref{it:P_positivity}..

\paragraph{Claim~7: $\BV$-bound.} We follow the idea
of~\cite[Proposition~2]{SAPM2021}. First, regularize the initial datum
$w_o$ and the function $b$ appearing in the source term as follows:
there exist sequences $w_o^h\in \C\infty (\Omega; \R)$ and
$b_h (t)\in \C\infty (\Omega;\R)$, for all $t \in [0,T]$, such that
\begin{align*}
  \lim_{h \to +\infty} \norma{w_o^h - w_o}_{\L1 (\Omega; \R)} = \
  & 0,
  &
    \norma{w_o^h}_{\L\infty (\Omega; \R)} \le \
  & \norma{w_o}_{\L\infty (\Omega; \R)},
  &
    \tv(w_o^h) \le\
  & \tv(w_o),
\end{align*}
and for all $t \in [0,T]$
\begin{align*}
  \lim_{h \to +\infty} \norma{b_h (t) - b (t)}_{\L1 (\Omega; \R)} = \
  & 0,
  &
    \norma{b_h (t)}_{\L\infty (\Omega; \R)} \le \
  & \norma{b (t)}_{\L\infty (\Omega; \R)},
  &
    \tv(b_h (t)) \le\
  & \tv(b(t)).
\end{align*}
According to~\eqref{eq:5}, define the sequence $w_h$ corresponding to
the sequences $w_o^h$ and $b_h$. By construction and due to the
regularity of the Green function $G$,
$w_h (t) \in \C\infty (\Omega; \R)$ for all $t \in [0,T]$. Moreover,
exploiting~\eqref{eq:10}, it follows immediately that
$w_h (t) \to w(t)$ in $\L1 (\Omega; \R)$ as $h \to + \infty$ for
a.e.~$t \in [0,T]$. Compute $\nabla w_h$, using~\eqref{eq:5}, the
symmetry property of the Green function $G$, see~\ref{item:K2}
in~\Cref{prop:stimePara}, integration by parts and~\ref{item:K3}
in~\Cref{prop:stimePara}:
\begin{align*}
  \nabla w_h(t,x) = \
  & \int_\Omega \nabla_x G (t,0,x,y) \, w_o^h(y) \dd{y}
    + \int_0^t\int_\Omega  \nabla_x G (t,\tau,x,y) \, B (\tau,y) \, w_h (\tau,y)
    \dd{y}\dd\tau
  \\
  & + \int_0^t\int_\Omega  \nabla_x G (t,\tau,x,y) \, b_h(\tau,y) \dd{y}\dd\tau
  \\
  = \
  & \int_\Omega \nabla_y G (t,0,y,x) \, w_o^h(y) \dd{y}
    + \int_0^t\int_\Omega  \nabla_x G (t,\tau,x,y) \, B (\tau,y) \, w_h (\tau,y)
    \dd{y}\dd\tau
  \\
  & + \int_0^t\int_\Omega  \nabla_y G (t,\tau,y,x) \, b_h(\tau,y) \dd{y}\dd\tau
  \\
  = \
  & - \int_\Omega  G (t,0,y,x) \, \nabla w_o^h(y) \dd{y}
    + \int_0^t\int_\Omega  \nabla_x G (t,\tau,x,y) \, B (\tau,y) \, w_h (\tau,y)
    \dd{y}\dd\tau
  \\
  & - \int_0^t\int_\Omega  G (t,\tau,y,x) \, \nabla b_h(\tau,y) \dd{y}\dd\tau.
\end{align*}
Pass now to the $\L1$-norm, exploiting~\ref{item:K4}
in~\Cref{prop:stimePara} and~\eqref{eq:11}:
\begin{align*}
  & \norma{\nabla w_h(t)}_{\L1 (\Omega; \R)}
  \\
  \le \
  & \norma{\nabla w_o^h}_{\L1 (\Omega; \R)}
    + \int_0^t \norm{\nabla b_h (\tau)}_{\L1 (\Omega; \R)}\dd\tau
  \\
  & + \int_0^t \frac{c}{(t-\tau)^{-(n+1)/2}}
    \norma{w_h(\tau)}_{\L1 (\Omega; \R)} \norma{B(\tau)}_{\L\infty (\Omega; \R)}
    \int_\Omega \exp\left(-C\norma{x-y}^2\middle/ (t-\tau)\right)\dd{y}\dd\tau
  \\
  \le \
  &  \norma{\nabla w_o^h}_{\L1 (\Omega; \R)}
    + \int_0^t \norm{\nabla b_h (\tau)}_{\L1 (\Omega; \R)}\dd\tau
  \\
  & + \mathcal{O} (1) \sqrt{t}
    \left(\norma{w_o^h}_{\L1 (\Omega; \R)} + \norm{b_h}_{\L1 ([0,t]\times\Omega; \R)}
    \right)
    \norma{B}_{\L\infty ([0,t]\times\Omega; \R)}
    \exp\int_0^t \norm{B (\tau)}_{\L\infty (\Omega;\R)}\dd\tau.
\end{align*}
By the lower semicontinuity of the total variation and the hypotheses
on the regularizing sequences $w_o^h$ and $b_h$, passing to the limit
$h \to +\infty$ yields~\eqref{eq:37}, proving~\ref{it:P_tv}.
\end{proofof}

\subsection{Hyperbolic Estimates}
\label{sec:hyperbolic}

Fix $T >0$. This paragraph is devoted to the IBVP
\begin{equation}
  \label{eq:12}
  \left\{
    \begin{array}{l@{\qquad}r@{\,}c@{\,}l}
      \partial_t u + \div\left(c (t,x) \, u\right)
      = A (t,x) \, u + a (t,x)
      & (t,x)
      & \in
      & [0,T] \times \Omega
      \\
      u (t,\xi) = 0
      & (t,\xi)
      & \in
      & \mathopen]0,T] \times \partial \Omega
      \\
      u (0,x) = u_o (x)
      & x
      & \in
      & \Omega \,.
    \end{array}
  \right.
\end{equation}
We assume throughout the following conditions:
\begin{enumerate}[label={\bf(H\arabic*)}]
\item\label{item:H0}
  $u_o \in \left(\L\infty \cap \BV \right) (\Omega;\reali)$
\item\label{it:H1}
  $c \in \left(\C0 \cap \L\infty\right) ([0,T] \times
  \Omega;\reali^n)$, $c(t) \in \C1 (\Omega; \R^n)$ for all
  $t \in [0,T]$,
  $D_x c \in \L\infty ([0,T] \times \Omega; \R^{n\times n})$.
\item\label{it:H2} $A \in \L\infty ([0,T] \times \Omega;\reali)$ and
  for all $t \in [0,T]$, $A (t) \in \BV (\Omega;\reali)$.
\item\label{it:H3}
  $a \in \L1 \left([0,T]; \L\infty(\Omega; \R)\right)$ and for all
  $t \in [0,T]$, $a (t) \in \BV (\Omega;\reali)$.
\end{enumerate}

\noindent Note that~\ref{it:H1} ensures that
$c (t) \in \C{0,1} (\overline{\Omega};\reali^n)$ for any $t \in[0,T]$.

For $(t_o,x_o) \in [0,T] \times \overline{\Omega}$ introduce the map
\begin{equation}
  \label{eq:caratt}
  \begin{array}{lccl}
    X(\,\cdot\,; t_o,x_o) :
    & I(t_o,x_o)
    & \to
    & \overline{\Omega}
    \\
    & t
    & \to
    & X(t; t_o,x_o)
  \end{array}
  \quad \text{ solving }
  \quad
  \begin{cases}
    \dot x = c(t,x), \\
    x(t_o) = x_o,
  \end{cases}
\end{equation}
$I(t_o,x_o)$ being the maximal interval where a solution to the Cauchy
problem in~\eqref{eq:caratt} is defined (with values in
$\overline{\Omega}$). For $t \in [0,T]$ and for $x \in \Omega$ define
\begin{equation}
  \label{eq:13}
  \mathcal{E} (\tau,t,x) = \exp \left(
    \int_\tau^t\left(
      A \left(s, X(s;t,x)\right) - \div c\left(s, X (s;t,x)\right)
    \right)\d{s}
  \right)
\end{equation}
and for all $(t,x) \in \mathopen]0,T] \times \Omega$, if
$x \in X(t; [0,t\mathclose[, \partial\Omega) \cap \Omega$, set
\begin{equation}
  \label{eq:14}
  T(t,x) = \inf\{s \in [0,t] \colon X(s;t,x) \in \Omega\},
\end{equation}
which is well defined by~\ref{it:H1} and Cauchy Theorem. Note that the
well posedness of the Cauchy problem~\eqref{eq:caratt}, ensured
by~\ref{it:H1}, implies that for all $t \in \mathopen]0, T]$
\begin{equation}
  \label{eq:38}
  \Omega
  \subseteq
  X (t;0,\Omega) \cup X (t; [0,t\mathclose[, \partial\Omega)
  \subseteq
  \overline{\Omega}
  \quad \mbox{ and } \quad
  X (t;0,\Omega) \cap X (t; [0,t\mathclose[, \partial\Omega) = \emptyset \,.
\end{equation}

As is well known, integrating~\eqref{eq:12} along characteristics
leads, for $(t,x) \in [0,T] \times \Omega$, to
\begin{equation}
  \label{eq:9}
  u(t,x) =
  \begin{cases}
    \displaystyle u_o\left(X (0;t,x)\right) \mathcal{E} (0,t,x) {+}
    \int_0^ta\left(\tau,X(\tau;t,x)\right)\mathcal{E} (\tau,t,x)
    \d\tau & x \in X(t;0,\Omega)
    \\
    \displaystyle \int_{T(t,x)}^t
    a\left(\tau,X(\tau;t,x)\right)\mathcal{E} (\tau,t,x) \d\tau & x
    \in X(t;[0,t\mathclose[,\partial\Omega)
  \end{cases}
\end{equation}

The following relation will be of use below, see for
instance~\cite[Chapter~3]{bressan-piccoli} for a proof:
\begin{align}
  \label{eq:pxoX}
  D_{x_o} X (t; t_o,x_o) = \
    & M(t),
      \text{ the matrix } M \text{ solves }
      \left\{
      \begin{array}{l}
        \dot M = D_x c\left(t,X (t;t_,x_o)\right) M
        \\
        M(t_o) = \Id.
      \end{array}
  \right.
\end{align}

We first particularize classical estimates to the present case.

\begin{lemma}[{\cite[Lemma~4.2]{teo}}]
  \label{lem:L1}
  Let~\ref{it:omega} and~\ref{it:H1} hold.
  \begin{enumerate}
  \item\label{item:3} Assume $u_o \in \L1 (\Omega;\reali)$,
    $A \in \L\infty ([0,T]\times\Omega; \reali)$ and
    $a \in \L1 ([0,T]\times\Omega; \reali)$. Then, the map $u$ defined
    in~\eqref{eq:9} satisfies for all $t \in [0,T]$
    \begin{displaymath}
      \norma{u (t)}_{\L1 (\Omega;\reali)}
      \leq
      \left(
        \norma{u_o}_{\L1 (\Omega;\reali)}
        +
        \norma{a}_{\L1 ([0,t]\times\Omega; \reali)}
      \right) \,
      \exp\left(\norma{A}_{\L\infty ([0,t]\times\Omega;\reali)} \, t\right) .
    \end{displaymath}

  \item\label{item:4} Assume $u_o \in \L\infty (\Omega;\reali)$,
    $A \in \L1 ([0,T];\L\infty(\Omega;\reali))$ and
    $a \in \L1 \left([0,T]; \L\infty(\Omega; \reali)\right)$. Then,
    the map $u$ defined in~\eqref{eq:9} satisfies for all
    $t \in [0,T]$
    \begin{align*}
      \norma{u (t)}_{\L\infty (\Omega;\reali)}
      \le \
      & \left(
        \norma{u_o}_{\L\infty (\Omega;\reali)}
        +
        \norma{a}_{\L1 ([0,t];\L\infty(\Omega; \reali))}
        \right) \,
      \\
      & \times \exp\left(
        \norma{A}_{\L1 ([0,t];\L\infty(\Omega;\reali))}
        +
        \norma{\div c}_{\L1 ([0,t];\L\infty (\Omega;\reali))}
        \right) .
    \end{align*}
  \end{enumerate}
\end{lemma}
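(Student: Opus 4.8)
The plan is to read off both bounds directly from the explicit representation~\eqref{eq:9}, using the characteristic flow~\eqref{eq:caratt} as a change of variables. The one structural fact that makes everything work is that the term $-\div c$ sitting inside the weight $\mathcal{E}$ of~\eqref{eq:13} is exactly compensated by the Jacobian of the flow: by~\eqref{eq:pxoX} and the Liouville formula one has $\det D_{x_o} X(t;t_o,x_o) = \exp \int_{t_o}^t \div c\left(s, X(s;t_o,x_o)\right) \d{s} > 0$, so each forward-in-time map $x_o \mapsto X(t;t_o,x_o)$ is a $\C1$ diffeomorphism onto its image, with the flow property $X\left(s;t,X(t;t_o,x_o)\right) = X(s;t_o,x_o)$.

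For item~\ref{item:4} I would argue pointwise, with no change of variables at all. For $(t,x) \in \mathopen]0,T]\times\Omega$ and any $\tau \in [0,t]$, bounding the two integrands in~\eqref{eq:13} by the corresponding $\L\infty$-in-space norms gives $\mathcal{E}(\tau,t,x) \le \exp\bigl(\norma{A}_{\L1 ([0,t];\L\infty(\Omega;\R))} + \norma{\div c}_{\L1 ([0,t];\L\infty(\Omega;\R))}\bigr)$, uniformly in $\tau$ and $x$. Plugging this into the first line of~\eqref{eq:9} bounds $\modulo{u(t,x)}$ by $\exp(\cdots)\bigl(\norma{u_o}_{\L\infty(\Omega;\R)} + \norma{a}_{\L1 ([0,t];\L\infty(\Omega;\R))}\bigr)$ when $x \in X(t;0,\Omega)$, while the second line of~\eqref{eq:9} yields the very same bound, without the $u_o$-contribution, when $x \in X(t;[0,t\mathclose[,\partial\Omega)$. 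Taking the supremum over $x$ and invoking the partition~\eqref{eq:38} closes this item; note that no regularity of $\partial\Omega$ beyond~\ref{it:omega} enters here.

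For item~\ref{item:3} I would split $\norma{u(t)}_{\L1 (\Omega;\R)}$ according to~\eqref{eq:38} and change variables in each piece. On $X(t;0,\Omega)$ the substitution $x = X(t;0,y)$, combined with the flow property above, turns $\int \modulo{u_o(X(0;t,x))}\,\mathcal{E}(0,t,x)\,\d{x}$ into $\int_\Omega \modulo{u_o(y)}\exp\bigl(\int_0^t A\left(s,X(s;0,y)\right)\d{s}\bigr)\d{y}$ once the $\div c$ terms cancel, which is at most $e^{\norma{A}_{\L\infty ([0,t]\times\Omega;\R)}\,t}\,\norma{u_o}_{\L1 (\Omega;\R)}$. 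For the source terms arising from both lines of~\eqref{eq:9}, I would use Fubini to integrate in $x$ first at each fixed $\tau \in [0,t]$: the points $x\in\Omega$ whose characteristic lies in $\Omega$ at time $\tau$ -- those issued from $\Omega$ at time $0$ together with those that entered through $\partial\Omega$ before time $\tau$, as recorded by $T(t,x)$ in~\eqref{eq:14} -- are parametrised via $x = X(t;\tau,z)$ by a subset of $\Omega$ in the variable $z$, and the same cancellation leaves $\int_0^t\!\int_\Omega \modulo{a(\tau,z)}\,e^{\norma{A}_{\L\infty ([0,t]\times\Omega;\R)}(t-\tau)}\,\d{z}\,\d\tau \le e^{\norma{A}_{\L\infty ([0,t]\times\Omega;\R)}\,t}\,\norma{a}_{\L1 ([0,t]\times\Omega;\R)}$. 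Summing the two contributions gives item~\ref{item:3}.

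The main obstacle is the measure-theoretic bookkeeping of the sets on which these changes of variables are performed: that $X(t;0,\cdot)$ is a diffeomorphism only onto $X(t;0,\Omega)$ and not onto all of $\Omega$, that the characteristics issued from the boundary carry no $u_o$-contribution, and that, for each intermediate time $\tau$, the part of $\Omega$ reached from $\Omega$ and the part reached from $\partial\Omega$ fit together into (a subset of) $\Omega$ in the variable $z$. All of this is controlled by~\eqref{eq:38} and by the well posedness of the Cauchy problem~\eqref{eq:caratt} ensured by~\ref{it:H1}; once it is in place, both estimates are routine consequences of the $\div c$ cancellation and of Fubini. We refer to~\cite[Lemma~4.2]{teo} for the remaining details.
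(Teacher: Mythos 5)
Your argument is correct. The paper does not prove this lemma itself but delegates it to~\cite[Lemma~4.2]{teo}, and your proof is precisely the standard one that this citation stands for: the pointwise bound on $\mathcal{E}$ in~\eqref{eq:13} for the $\L\infty$ estimate in~\ref{item:4}, and for~\ref{item:3} the change of variables along the flow, with the Liouville cancellation of $\div c$ against the Jacobian and the decomposition~\eqref{eq:38} handling the characteristics issued from $\partial\Omega$. Nothing essential is missing.
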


\begin{lemma}
  \label{lem:A}
  Let~\ref{it:omega} and~\ref{it:H1} hold. Assume
  $u_o \in \L\infty (\Omega;\reali)$ and
  $a \in \L1 \left([0,T]; \L\infty(\Omega; \reali)\right)$. Fix
  $A_1,A_2 \in \L\infty ([0,T]\times\Omega;\reali)$. Then, the maps
  $u_1,u_2$ defined in~\eqref{eq:9} satisfy for all $t \in [0,T]$
  \begin{align*}
    \norma{u_2 (t) - u_1 (t)}_{\L1 (\Omega;\reali)}
    \le \
    & \exp \left(
      t
      \max
      \left\{
      \norma{A_1}_{\L\infty ([0,t]\times\Omega;\reali)}
      \,,\;
      \norma{A_2}_{\L\infty ([0,t]\times\Omega;\reali)}
      \right\}\right)
    \\
    & \times
      \left(
      \norma{u_o}_{\L\infty (\Omega;\reali)}
      +
      \norma{a}_{\L1 ([0,t]; \L\infty(\Omega;\reali))}
      \right)
      \norma{A_2-A_1}_{\L1 ([0,t]\times\Omega;\reali)} \,.
  \end{align*}
\end{lemma}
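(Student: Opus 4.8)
The plan is to exploit that $u_1$ and $u_2$ are produced by the \emph{same} representation~\eqref{eq:9} out of the \emph{same} vector field $c$, so that the characteristic flow $X$ of~\eqref{eq:caratt}, the hitting time $T(t,x)$ of~\eqref{eq:14} and the disjoint covering $\Omega = \Omega_1 \cup \Omega_2$ with $\Omega_1 := X(t;0,\Omega)\cap\Omega$ and $\Omega_2 := X(t;[0,t\mathclose[,\partial\Omega)\cap\Omega$ (see~\eqref{eq:38}) are common to both problems, the coefficient $A$ entering only through the factor $\mathcal{E}$ of~\eqref{eq:13}. Splitting $\mathcal{E}_i(\tau,t,x) = \mathcal{D}(\tau,t,x)\,\mathcal{A}_i(\tau,t,x)$ with $\mathcal{D}(\tau,t,x) = \exp\bigl(-\int_\tau^t\div c(s,X(s;t,x))\,\d{s}\bigr)$ and $\mathcal{A}_i(\tau,t,x) = \exp\bigl(\int_\tau^t A_i(s,X(s;t,x))\,\d{s}\bigr)$, the difference of the two instances of~\eqref{eq:9} reads, on $\Omega_1$,
\begin{displaymath}
  u_2(t,x)-u_1(t,x) = u_o\bigl(X(0;t,x)\bigr)\,\bigl(\mathcal{E}_2-\mathcal{E}_1\bigr)(0,t,x) + \int_0^t a\bigl(\tau,X(\tau;t,x)\bigr)\,\bigl(\mathcal{E}_2-\mathcal{E}_1\bigr)(\tau,t,x)\,\d\tau,
\end{displaymath}
and the same with $\int_0^t$ replaced by $\int_{T(t,x)}^t$ and the initial-datum term deleted on $\Omega_2$.

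I would then bound $\bigl(\mathcal{E}_2-\mathcal{E}_1\bigr)(\tau,t,x) = \mathcal{D}(\tau,t,x)\,\bigl(\mathcal{A}_2-\mathcal{A}_1\bigr)(\tau,t,x)$ through the elementary inequality $\modulo{e^{\alpha}-e^{\beta}} \le e^{\max\{\alpha,\beta\}}\,\modulo{\alpha-\beta}$ with $\alpha = \int_\tau^t A_2(s,X(s;t,x))\,\d{s}$ and $\beta = \int_\tau^t A_1(s,X(s;t,x))\,\d{s}$; since $\alpha,\beta \le t\max\{\norma{A_1}_{\L\infty([0,t]\times\Omega)},\norma{A_2}_{\L\infty([0,t]\times\Omega)}\}$, this produces the constant $e^{t\max\{\norma{A_1}_{\L\infty},\norma{A_2}_{\L\infty}\}}$ and leaves $\mathcal{D}(\tau,t,x)\int_\tau^t\modulo{(A_2-A_1)(s,X(s;t,x))}\,\d{s}$. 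Integrating $\modulo{u_2(t,x)-u_1(t,x)}$ over $\Omega$ and changing variables along characteristics region by region is the core computation: for the initial-datum term on $\Omega_1$ the substitution $y = X(0;t,x)$ has, by~\eqref{eq:pxoX}, Jacobian $\exp\bigl(\int_0^t\div c(\sigma,X(\sigma;0,y))\,\d\sigma\bigr)$, which exactly cancels $\mathcal{D}(0,t,x)$ once one uses the flow identity $X(s;t,X(t;0,y))=X(s;0,y)$; for the source terms on $\Omega_1$ and $\Omega_2$ the substitution $z = X(\tau;t,x)$ at the ``birth time'' $\tau$ of each portion of source has Jacobian $\exp\bigl(\int_\tau^t\div c(\sigma,X(\sigma;\tau,z))\,\d\sigma\bigr)$, which again cancels $\mathcal{D}(\tau,t,x)$. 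After these cancellations Fubini turns the $x$--integrals into integrals over subsets of $\Omega$ of $\modulo{u_o}$, $\modulo{a}$ and $\modulo{A_2-A_1}$; bounding $\modulo{u_o}$ by $\norma{u_o}_{\L\infty(\Omega)}$, each $\modulo{a(\tau,\cdot)}$ by $\norma{a(\tau)}_{\L\infty(\Omega)}$, and the surviving space integral of $\modulo{A_2-A_1}$ as in $\norma{A_2-A_1}_{\L1([0,t]\times\Omega)}$ gives the claimed estimate. A shorter route: by linearity of~\eqref{eq:9} in the source and the datum, $u_2-u_1$ solves the transport problem~\eqref{eq:12} with $A=A_1$, source $(A_2-A_1)\,u_2$ and zero initial datum, so that the $\L1$ estimate of Lemma~\ref{lem:L1} applied to this difference, together with the a priori $\L1$ bound on $u_2$ from the same lemma, Hölder's inequality and a Gronwall argument arranged so that the two exponents combine into $t\max\{\norma{A_1}_{\L\infty},\norma{A_2}_{\L\infty}\}$ rather than into their sum, yields a bound of the stated type.

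The delicate step is the change-of-variables bookkeeping: tracking the two characteristic regions and their flow images, and, above all, pairing every factor $\mathcal{D}(\tau,t,x)=\exp(-\int_\tau^t\div c)$ coming from $\mathcal{E}$ with the Jacobian $\exp(+\int_\tau^t\div c)$ of the corresponding flow map, so that $\div c$ does not survive in the final constant. The well-posedness of~\eqref{eq:caratt} ensured by~\ref{it:H1} and the disjointness in~\eqref{eq:38} are exactly what make these substitutions legitimate. In the alternative route, the point requiring care is instead to justify that $u_2-u_1$ coincides, in the sense of~\eqref{eq:9}, with the solution of the perturbed problem, i.e.\ that $u_2$ is regular enough along characteristics for the splitting $A_2u_2=A_1u_2+(A_2-A_1)u_2$ to be transferred to the level of the representation.
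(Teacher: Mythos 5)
Your overall route --- taking the difference of the two instances of~\eqref{eq:9}, factoring $\mathcal{E}_i=\mathcal{D}\,\mathcal{A}_i$, using $\modulo{e^{\alpha}-e^{\beta}}\le e^{\max\{\alpha,\beta\}}\modulo{\alpha-\beta}$ (which is exactly what produces the $\max$ in the exponent, rather than the sum that your alternative route through Lemma~\ref{lem:L1} would give), and then changing variables along characteristics region by region --- is the natural adaptation of~\cite[Lemma~4.3]{teo}; note that the paper offers no proof beyond that citation, so this is the argument one is expected to supply.

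There is, however, a genuine gap exactly at the step you yourself flag as delicate, namely the claim that ``$\div c$ does not survive in the final constant''. The factor $\mathcal{D}(\tau,t,x)$ cancels the Jacobian of \emph{one} substitution (the one at the foot of the characteristic, $y=X(0;t,x)$ or $y=X(\tau;t,x)$), but after that cancellation you are left with quantities such as $\int_\Omega\modulo{u_o(y)}\int_0^t\modulo{(A_2-A_1)\left(s,X(s;0,y)\right)}\d{s}\d{y}$, in which $A_2-A_1$ is sampled at the \emph{moving} point $X(s;0,y)$. Converting $\int_\Omega\modulo{(A_2-A_1)\left(s,X(s;0,y)\right)}\d{y}$ into $\norma{(A_2-A_1)(s)}_{\L1(\Omega;\reali)}$ requires a second substitution $z=X(s;0,y)$, whose Jacobian $\exp\bigl(-\int_0^s\div c\bigr)$ is compensated by nothing; a factor $\exp\bigl(\norma{\div c}_{\L1([0,t];\L\infty(\Omega;\reali))}\bigr)$ survives. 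This is not a presentational slip: the constant as stated in the Lemma cannot be reached by any argument. Take $\Omega=B(0,1)\subset\reali^2$, $c(t,x)=-\lambda x$, $u_o\equiv1$, $a\equiv0$, $A_1\equiv0$ and $A_2(s,x)=\epsilon\,\caratt{B(0,e^{-\lambda s})}(x)$; then every characteristic issuing from $\Omega$ at time $0$ remains in the support of $A_2$, so $\norma{u_2(t)-u_1(t)}_{\L1(\Omega;\reali)}=\pi\left(e^{\epsilon t}-1\right)\approx\pi\epsilon t$, while $\norma{A_2-A_1}_{\L1([0,t]\times\Omega;\reali)}\le\pi\epsilon/(2\lambda)$, and the stated right hand side is violated for $\lambda$ large. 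Your proof becomes correct (and the Lemma true) once the right hand side is multiplied by $\exp\bigl(\norma{\div c}_{\L1([0,t];\L\infty(\Omega;\reali))}\bigr)$ --- harmless for the rest of the paper, since in the coupling this quantity is bounded via~\eqref{eq:v_divL1}. The same factor is unavoidable in your alternative route, where it enters through the $\L\infty$ bound on $u_2$ in Item~\ref{item:4} of Lemma~\ref{lem:L1}; moreover that route yields the sum, not the maximum, of the two $A$-norms in the exponent, and the ``arrangement'' you invoke to turn it into a maximum is not available.
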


\noindent The proof is a straightforward adaptation
from~\cite[Lemma~4.3]{teo}.

The $\tv$ bound obtained in the next lemma will be crucial in the
sequel.

\begin{lemma}
  \label{lem:tv}
  Let~\ref{it:omega}--\ref{item:H0}--\ref{it:H3} hold. Assume,
  moreover, that $A \in \L1 ([0,T]; \L\infty (\Omega;\reali))$ and for
  all $t \in [0,T]$, $A(t) \in \BV (\Omega;\reali)$. Let $c$
  satisfy~\ref{it:H1} and, moreover, $c (t) \in \C2 (\Omega;\reali^n)$
  for all $t \in [0,T]$ and
  $\grad \div c \in \L1 ([0,T]\times \Omega; \R^n)$. Then, the map $u$
  defined in~\eqref{eq:9} satisfies for all $t \in [0,T]$.
  \begin{align*}
    & \tv (u (t);\Omega)
    \\
    \le \
    &  \exp\left(
      \norma{A}_{\L1([0,t];\L\infty (\Omega; \R))}
      +
      \norma{D_x c}_{\L1([0,t];\L\infty (\Omega; \R^{n\times n}))}
      \right)
    \\
    & \times \Biggl(
      \tv(u_o) + \mathcal{O}(1) \norma{u_o}_{\L\infty(\Omega; \R)}
      + \int_0^t \tv\left( a (\tau)\right) \dd\tau
    \\
    & \qquad
      + \left(\norma{u_o}_{\L\infty (\Omega; \R)}
      + \norma{a}_{\L1([0,t];\L\infty(\Omega; \R))}\right)
      \int_0^t \left(\tv\left(A(\tau)\right)
      + \norma{\grad \div c(\tau) }_{\L1 (\Omega; \R^n)}\right) \dd{\tau}
      \Biggr).
  \end{align*}
\end{lemma}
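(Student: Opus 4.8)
The plan is to estimate $\tv(u(t);\Omega)$ directly from the explicit representation~\eqref{eq:9}, by differentiating $u(t,\cdot)$ with respect to $x$ wherever $u$ is smooth and controlling the boundary contribution created by the discontinuity of $u$ across the interface $X(t;[0,t[,\partial\Omega)$. First I would reduce to smooth data: approximate $u_o$, $a(t)$, $A(t)$ by $\C\infty$ functions (with TV and $\L\infty$ norms not increased, as in Claim~7 of the proof of Proposition~\ref{prop:superStimePara}), so that along each characteristic $u$ is $\C1$ in the interior of each of the two regions $X(t;0,\Omega)$ and $X(t;[0,t[,\partial\Omega)$; by lower semicontinuity of the total variation with respect to $\L1$ convergence it then suffices to bound $\tv$ for the regularized solutions and pass to the limit.

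Next I would compute $\grad_x u(t,x)$ on each region. On $X(t;0,\Omega)$ one differentiates $u_o(X(0;t,x))\,\mathcal{E}(0,t,x)+\int_0^t a(\tau,X(\tau;t,x))\,\mathcal{E}(\tau,t,x)\d\tau$, using~\eqref{eq:pxoX} to control $D_{x}X(\tau;t,x)$ — the exponential of $\int \|D_x c\|_{\L\infty}$ — and differentiating $\mathcal{E}$ from~\eqref{eq:13}, which brings in $\int_\tau^t \grad_x\big(A-\div c\big)(s,X(s;t,x))\,D_xX(s;t,x)\d s$; this is exactly where $\tv(A(\tau))$ and $\|\grad\div c(\tau)\|_{\L1}$ enter (after a change of variables $x\mapsto X(s;t,x)$, whose Jacobian is again controlled by the $D_xc$ exponential). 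Integrating $\|\grad_x u(t,\cdot)\|_{\L1}$ over $X(t;0,\Omega)$ and changing variables back to the initial time produces $\tv(u_o)+\int_0^t\tv(a(\tau))\d\tau$ plus the terms multiplied by $\big(\|u_o\|_{\L\infty}+\|a\|_{\L1([0,t];\L\infty)}\big)$, all carrying the prefactor $\exp\big(\|A\|_{\L1([0,t];\L\infty)}+\|D_xc\|_{\L1([0,t];\L\infty)}\big)$ coming from $\mathcal{E}$ and from the Jacobian. On $X(t;[0,t[,\partial\Omega)$ the same computation applies to $\int_{T(t,x)}^t a(\tau,X(\tau;t,x))\,\mathcal{E}(\tau,t,x)\d\tau$, with the extra term from differentiating the lower limit $T(t,x)$; since $u=0$ on $\partial\Omega$ and the integrand vanishes as $\tau\to T(t,x)$, this term is harmless.

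The remaining, and main, obstacle is the jump of $u(t,\cdot)$ across the $\C1$ hypersurface $\Sigma_t:=\partial\big(X(t;0,\Omega)\big)\cap\Omega$ separating the two regions: the distributional gradient of $u$ picks up a surface measure whose mass is $\int_{\Sigma_t}|u^+-u^-|\,\d\mathcal{H}^{n-1}$. On one side of $\Sigma_t$ the trace of $u$ is $0$ (it is the image under the flow of the boundary value $u(T(t,x),\cdot)=0$), so the jump equals $|u|$ restricted to the interior trace, and by~\eqref{eq:9} and Lemma~\ref{lem:L1}\ref{item:4} this is bounded by $\big(\|u_o\|_{\L\infty}+\|a\|_{\L1([0,t];\L\infty)}\big)\exp(\cdots)$; multiplying by $\mathcal{H}^{n-1}(\Sigma_t)$, which is finite and controlled by $\partial\Omega$ and the Lipschitz bound on $c$ (this is where the $\mathcal{O}(1)$ factor depending on $\Omega$ appears), yields the term $\mathcal{O}(1)\,\|u_o\|_{\L\infty(\Omega;\R)}$ in the statement. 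Summing the interior gradient bound and this interface bound gives the claimed inequality; the final step is letting the regularization parameter tend to infinity and invoking lower semicontinuity of $\tv$.
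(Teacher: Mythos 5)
Your overall strategy---differentiate the representation formula~\eqref{eq:9} on each of the two regions in~\eqref{eq:38}, control $D_xX$ via~\eqref{eq:pxoX}, and extract $\tv(A)$ and $\norma{\grad\div c}_{\L1}$ from $\nabla_x\mathcal{E}$ after a change of variables---is the same as the paper's. The genuine divergence is in how the interface $\Sigma_t=\partial\,(X(t;0,\Omega))\cap\Omega$ is handled. The paper never meets a jump there: it regularizes $u_o$ with the specific sequence of~\cite[Lemma~4.3]{siam2018}, which forces $u_o^h=0$ on $\partial\Omega$ at the price of $\tv(u_o^h)\le\O\,\norma{u_o}_{\L\infty(\Omega;\R)}+\tv(u_o)$; the two branches of~\eqref{eq:9} then match across $\Sigma_t$ (both reduce to $\int_0^t a\,\mathcal{E}\,\d\tau$ there), $u_h(t)$ is continuous, and the $\O\,\norma{u_o}_{\L\infty}$ in the statement is inherited from the regularization. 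You use a generic mollification (the one in Claim~7 of the proof of Proposition~\ref{prop:superStimePara} does not vanish on $\partial\Omega$) and pay for it with an explicit surface term $\int_{\Sigma_t}\modulo{u^+-u^-}\d{\mathcal{H}^{n-1}}$. That route can be made to work, but two supporting claims need repair: the trace of $u$ on the boundary-fed side of $\Sigma_t$ is not $0$ but $\int_0^t a\,\mathcal{E}\,\d\tau$, so the jump equals $\modulo{u_o(\xi)}\,\mathcal{E}(0,t,\cdot)$ at the trace point $\xi\in\partial\Omega$---bounding it by the full $\L\infty$ norm of $u$ as you propose leaves a spurious $\O\,\norma{a}_{\L1([0,t];\L\infty)}$ that is not in the statement; and $\mathcal{H}^{n-1}(\Sigma_t)$ is controlled by $\exp\left((n-1)\int_0^t\norma{D_xc}_{\L\infty}\right)\mathcal{H}^{n-1}(\partial\Omega)$, a constant of a different shape from the stated prefactor, harmless only because it is absorbed into $\O$.

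The step that fails as written is your dismissal of the Leibniz term from $\nabla_xT(t,x)$. The integrand of $\int_{T(t,x)}^t a\left(\tau,X(\tau;t,x)\right)\mathcal{E}(\tau,t,x)\,\d\tau$ does \emph{not} vanish at the lower limit: at $\tau=T(t,x)$ it equals $a\left(T(t,x),\xi\right)\mathcal{E}\left(T(t,x),t,x\right)$ with $\xi\in\partial\Omega$, and $a$ is not assumed to vanish on $\partial\Omega$; it is the value of $u$, not of the integrand, that is zero at the inflow point. A one-dimensional check with $\Omega=\mathopen]0,1\mathclose[$, $c\equiv1$, $A=0$, $u_o=0$, $a\equiv1$ gives $u(t,x)=\min(x,t)$, whose gradient on the boundary-fed region $\mathopen]0,t\mathclose[$ is exactly this Leibniz contribution; dropping it would give $\partial_xu=a(x)-a(0)$ instead of $a(x)$. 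So the term must either be estimated explicitly (it produces a contribution controlled by $\norma{a(\tau)}_{\L\infty}$ along $\partial\Omega$ and $\nabla_xT$) or removed by also regularizing $a$ to vanish on $\partial\Omega$. For fairness: the paper's own expression for $\grad u_h$ on $X(t;[0,t\mathclose[,\partial\Omega)$ omits this same term without comment, so your write-up is not worse than the published proof here---but your stated justification for discarding it is incorrect and needs to be replaced by an actual estimate.
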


\begin{proof}
  The proof extends that of~\cite[Lemma~4.4]{siam2018}, where a
  linear conservation law, i.e., with no source term, on a bounded
  domain is considered.

  We first regularize the initial datum $u_o$ and the functions $A$
  and $a$ appearing in the source term. In particular, we use the
  approximation of the initial datum constructed
  in~\cite[Lemma~4.3]{siam2018}, yielding a sequence
  $u_o^h \in \C3 (\Omega; \R)$ such that
  \begin{equation}
    \label{eq:27}
    \begin{aligned}
      \lim_{h \to +\infty} \norma{u_o^h - u_o}_{\L1 (\Omega;\R)} = \ &
      0, & u_o^h (\xi) = \ & 0 \text{ for all } \xi \in
      \partial\Omega,
      \\
      \norma{u_o^h}_{\L\infty (\Omega;\R)} \le \ &
      \norma{u_o}_{\L\infty (\Omega;\R)}, & \tv (u_o^h) \le \ &
      \mathcal{O}(1) \, \norma{u_o}_{\L\infty (\Omega;\R)} + \tv(u_o)
      \,.
    \end{aligned}
  \end{equation}
  Then, using~\cite[Formula~(1.8) and Theorem~1.17]{Giusti}, we
  regularize the functions $A$ and $a$ as follows. For all
  $t \in [0,T]$ and $h\in \N \setminus \{0\}$, there exist sequences
  $A_h(t), a_h(t) \in \C\infty (\Omega; \R)$ such that, for all
  $t \in [0,T]$,
  \begin{equation}
    \label{eq:29}
    \begin{aligned}
      \lim_{h\to+\infty} & \norma{A_h (t) - A(t)}_{\L1 (\Omega;\R)} =
      \ 0, & \lim_{h\to+\infty} &\norma{a_h (t) - a(t)}_{\L1
        (\Omega;\R)} = \ 0,
      \\
      &\norma{A_h (t)}_{\L\infty (\Omega;\R)} \le \
      \norma{A(t)}_{\L\infty (\Omega;\R)}, && \norma{a_h
        (t)}_{\L\infty (\Omega;\R)} \le \ \norma{a(t)}_{\L\infty
        (\Omega;\R)},
      \\
      \lim_{h\to+\infty} & \tv\left(A_h(t)\right) = \ \tv\left(A(t)
      \right), & \lim_{h\to+\infty} & \tv\left(a_h(t)\right) = \
      \tv\left(a(t) \right).
    \end{aligned}
  \end{equation}
  According to~\eqref{eq:9}, define the sequence $u_h$ corresponding
  to the sequences $u_o^h$, $A_h$, $a_h$, where the map $\mathcal{E}$
  in~\eqref{eq:13} is substituted by $\mathcal{E}_h$, defined
  accordingly exploiting $A_h$. By construction,
  $u_h (t)\in \C1 (\Omega; \R)$ for all $t \in [0,T]$, thus we can
  differentiate it. In particular, we are interested in the
  $\L1$--norm of $\nabla u_h(t)$. By~\eqref{eq:38}, the following
  decomposition holds:
  \begin{equation}
    \label{eq:17}
    \norma{\nabla u_h(t)}_{\L1 (\Omega; \R)}
    =
    \norma{\nabla u_h(t)}_{\L1 (X(t; 0, \Omega); \R)}
    +
    \norma{\nabla u_h(t)}_{\L1 (X(t; [0,t\mathclose[, \partial\Omega); \R)}.
  \end{equation}
  The two terms on the right hand side of~\eqref{eq:17} are treated
  separately. Focus on the first term: if $x \in X(t;0,\Omega)$,
  by~\eqref{eq:9}
  \begin{align*}
    & \grad u_h (t,x)
    \\= \
    & \mathcal{E}_h (0,t,x) \Bigl(
      \grad u_o^h\left(X(0;t,x)\right) D_x X (0;t,x)
    \\
    & + u_o^h\left( X (0;t,x)\right)
      \int_0^t \left(\grad A_h\left(s,  X (s;t,x)\right)
      - \grad \div c\left(s,  X (s;t,x)\right)\right) D_x X (s;t,x) \dd{s}
      \Bigr)
    \\
    & + \int_0^t \mathcal{E}_h(\tau,t,x) \Bigl(
      \grad a_h\left(\tau,X(\tau;t,x)\right) D_x X (\tau;t,x)
    \\
    & + a_h\left(\tau,X (\tau;t,x)\right)
      \int_\tau^t \left(\grad A_h\left(s,  X (s;t,x)\right)
      - \grad \div c\left(s,  X (s;t,x)\right)\right) D_x X (s;t,x) \dd{s}
      \Bigr) \dd\tau \,.
  \end{align*}
  Use the change of variables $y = X(0; t,x)$ in the first two lines
  above involving $u_o^h$, the change of variables $y = X(\tau; t,x)$
  in the latter two lines and the bound
  \begin{equation}
    \label{eq:36}
    \norma{D_x X (\tau;t,x)}
    \le
    \exp \left(\int_\tau^t
      \norma{D_x c(s)}_{\L\infty (\Omega; \R^{n\times n})} \dd{s}\right),
  \end{equation}
  that holds for every $t \in [0,T]$ by~\eqref{eq:pxoX}. We thus
  obtain
  \begin{align}
    \nonumber
    &  \norma{\grad u_h (t)}_{\L1(X(t; 0, \Omega);\R^n)} = \
      \int_{X(t; 0, \Omega)} \norma{\grad u_h (t,x)}\dd{x}
    \\ \label{eq:tv1}
    \le \
    & \exp\left(
      \int_0^t \left(\norma{A_h(\tau)}_{\L\infty (\Omega; \R)}
      + \norma{D_x c(\tau)}_{\L\infty (\Omega; \R^{n\times n})}\right) \dd{\tau}
      \right)
    \\\nonumber
    & \times \Biggl(
      \norma{\grad u_o^h}_{\L1 (\Omega;\R^n)}
      + \norma{u_o^h}_{\L\infty (\Omega; \R)}
      \int_0^t \left(\norma{\grad A_h(\tau)}_{\L1 (\Omega; \R^n)}
      + \norma{\grad \div c(\tau) }_{\L1 (\Omega; \R^n)}\right) \dd{\tau}
    \\\nonumber
    & \qquad
      + \int_0^t \norma{\grad a_h (\tau)}_{\L1 (X(\tau; 0, \Omega); \R^n)} \dd\tau
      + \int_0^t \norma{ a_h (\tau)}_{\L\infty (X(\tau; 0, \Omega); \R)}
    \\
    \nonumber
    & \qquad \qquad \qquad
      \times \left(\int_\tau^t \left(\norma{\grad A_h(s)}_{\L1 (X(s; 0, \Omega); \R^n)}
      + \norma{\grad \div c(s) }_{\L1 (X(s; 0, \Omega); \R^n)}\right)\dd{s}\right) \dd{\tau}
      \Biggr).
  \end{align}
  Pass to the second term on the right in~\eqref{eq:17}:
  if $x \in X (t; [0,t\mathclose[, \partial \Omega)$, by~\eqref{eq:9} and~\eqref{eq:36}
  \begin{align*}
    & \grad u_h (t,x)
    \\
    = \
    &  \int_{T(t,x)}^t \mathcal{E}_h(\tau,t,x) \Bigl(
      \grad a_h\left(\tau,X(\tau;t,x)\right) D_x X (\tau;t,x)
    \\
    & + a_h\left(\tau,X (\tau;t,x)\right)
      \int_\tau^t \left(\grad A_h\left(s,  X (s;t,x)\right)
      - \grad \div c\left(s,  X (s;t,x)\right)\right) D_x X (s;t,x) \dd{s}
      \Bigr) \dd\tau.
  \end{align*}
  For every $t \in [0,T]$, proceed similarly as above 
  using the change of variables $y=X(\tau;t,x)$:
  \begin{align}
    \nonumber
    & \norma{\grad u_h (t)}_{\L1( X (t; [0,t\mathclose[, \partial \Omega);\R)}
      =
      \int_{\Omega \setminus X(t; 0,\Omega)} \norma{\grad u_h (t,x)}\dd{x}
    \\
    \label{eq:tv2}
    \le\
    & \exp\left(
      \int_0^t \left(\norma{A_h(\tau)}_{\L\infty (\Omega; \R^n)}
      + \norma{D_x c(\tau)}_{\L\infty (\Omega; \R^{n\times n})}\right) \dd{\tau}
      \right)
    \\\nonumber
    & \times \Biggl(
      \int_0^t \int_{\Omega \setminus X (\tau;0,\Omega)}
      \norma{\grad a_h (\tau, y)}\dd{y}\dd{\tau}
      + \int_0^t
      \norma{ a_h (\tau)}_{\L\infty (\Omega \setminus X (\tau;0,\Omega); \R)} \dd\tau
    \\
    \nonumber
    & \qquad\times
      \left(
      \norma{\grad A_h}_{\L1 (\Omega \setminus X ([0,t];0,\Omega); \R^n)}
      +
      \norma{\grad \div c}_{\L1 (\Omega \setminus X ([0,t];0,\Omega); \R^n)}
      \right)
      \Biggr).
  \end{align}
  Inserting the estimates~\eqref{eq:tv1} and~\eqref{eq:tv2}
  into~\eqref{eq:17}, we thus obtain
  \begin{align*}
    & \norma{\grad u_h(t)}_{\L1 (\Omega;\R^n)}
    \\
    \le \
    & \exp
      \left(
      \norma{A_h}_{\L1([0,t];\L\infty (\Omega; \R^n))}
      +
      \norma{D_x c}_{\L1([0,t];\L\infty (\Omega; \R^{n\times n}))}
      \right)
    \\
    & \times \Biggl(
      \norma{\grad u_o^h}_{\L1 (\Omega;\R)}
      + \int_0^t \norma{\grad a_h (\tau)}_{\L1 (\Omega; \R^n)} \dd\tau
    \\
    & \quad + \left(\!\norma{u_o^h}_{\L\infty (\Omega; \R)}
      \!\! +\! \int_0^t \norma{ a_h (\tau)}_{\L\infty(\Omega; \R)}\dd\tau\!\right)
      \!\!
      \left(\norma{\grad A_h}_{\L1 ([0,t] \times \Omega; \R^n)}
      \!+\! \norma{\grad \div c }_{\L1 ([0,t] \times \Omega; \R^n)}\right)
      \!\!\Biggr).
  \end{align*}
  Since $u_h(t) \to u(t)$ in $\L1 (\Omega;\R)$, by the lower
  semicontinuity of the total variation and the
  hypotheses~\eqref{eq:27}--\eqref{eq:29} on the regularizing
  sequences $u_o^h$, $A_h$ and $a_h$, passing to the limit
  $h \to +\infty$, we complete the proof.
\end{proof}

It is on the basis of next Proposition that we give a definition of solution to
\eqref{eq:12}.

\begin{proposition}
  \label{prop:def}
  Let~\ref{it:omega} and~\ref{it:H1} hold.  Assume
  $u_o \in \L\infty (\Omega;\reali)$,
  $A \in \L\infty ([0,T] \times \Omega; \R)$ and
  $a \in \L1 \left([0,T]; \L\infty (\Omega; \R)\right)$. Then, the
  following statements are equivalent:
  \begin{enumerate}[label={\bf(\arabic*)}]
  \item\label{it:1} $u$ is defined by~\eqref{eq:9}, i.e., through
    integration along characteristics.

  \item\label{it:2} $u \in \L\infty ([0,T] \times \Omega; \R)$ is such
    that for any test function
    $\phi \in \Cc1 (\mathopen] -\infty, T\mathclose[ \times \Omega;
    \R)$,
    \begin{equation}
      \label{eq:41}
      \!\!\!\!\!\!\!\!\!\!\!\!\!\!\!\!\!\!\!\!\!\!\!\!
      \begin{array}{@{}r@{}}
        \displaystyle
       \int_0^T \int_\Omega
        \left(u (t,x)
        \left(\partial_t \phi (t,x) + c (t,x) \cdot \nabla \phi (t,x) \right)
        +
        \left(A (t,x) \, u(t,x) + a(t,x) \right)\phi (t,x)
        \right)
        \d{x}\d{t}
      \\
        \displaystyle
        + \int_\Omega u_o (x) \, \phi(0,x) \d{x}
        = 0.
      \end{array}
    \end{equation}

  \item\label{it:5} $u \in \L\infty ([0,T] \times \Omega; \R)$ is such
    that for any test function
    $\phi \in \W1\infty (\mathopen] -\infty, T\mathclose[ \times
    \Omega; \R)$, equality~\eqref{eq:41} holds.
  \end{enumerate}
\end{proposition}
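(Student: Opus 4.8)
The plan is to prove the chain of implications $\ref{it:5} \Rightarrow \ref{it:2} \Rightarrow \ref{it:1} \Rightarrow \ref{it:5}$; since the first implication is trivial (a $\Cc1$ test function is in particular $\W1\infty$), the real content lies in the other two. For $\ref{it:1}\Rightarrow\ref{it:5}$, I would fix a test function $\phi \in \W1\infty(\mathopen]-\infty,T\mathclose[ \times \Omega; \R)$ and substitute the representation~\eqref{eq:9} into the left hand side of~\eqref{eq:41}, splitting the $x$-integral over $X(t;0,\Omega)$ and over $X(t;[0,t\mathclose[,\partial\Omega)$ according to~\eqref{eq:38}. On each piece one performs the change of variables along characteristics, $y = X(0;t,x)$ on the first region and $y = X(\tau;t,x)$ when handling the source terms, using that the Jacobian of $x \mapsto X(0;t,x)$ is exactly compensated by the factor $\mathcal{E}$ containing $-\div c$ in~\eqref{eq:13}, via~\eqref{eq:pxoX}. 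The key computation is that along each characteristic $s \mapsto X(s;t,x)$ the map $s \mapsto u(s,X(s;t,x))\,\mathcal{E}(\tau,s,x)$, or more precisely $s \mapsto (\text{the integrand evaluated along the flow})$, is absolutely continuous with the right derivative, so that the space-time integral telescopes: the bulk terms involving $\partial_t\phi + c\cdot\nabla\phi$ recombine as a total derivative $\frac{d}{ds}\phi(s,X(s;t,x))$ along characteristics, the terms with $Au+a$ match the source contributions coming from differentiating $\mathcal{E}$ and the Duhamel integral, and the boundary contributions at $s=0$ (on $X(t;0,\Omega)$) produce the $\int_\Omega u_o\,\phi(0,x)\,dx$ term while those at $s = T(t,x)$ (on the other region) vanish because $\phi$ is supported away from $\partial\Omega$ — here is where one genuinely uses $\phi \in \Cc1$ or $\W1\infty$ in $\Omega$ (not $\overline\Omega$), so that $\phi(t,X(T(t,x);t,x)) = 0$ since $X(T(t,x);t,x)\in\partial\Omega$. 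An application of Fubini to interchange the $(t,x)$ and $s$ (or $\tau$) integrations, justified by the $\L\infty$ bounds on $u$ from Lemma~\ref{lem:L1} together with~\ref{it:H1}--\ref{it:H3}, then yields~\eqref{eq:41}.

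For $\ref{it:2}\Rightarrow\ref{it:1}$, the strategy is a uniqueness argument: I would show that the distributional formulation~\eqref{eq:41} has at most one solution in $\L\infty([0,T]\times\Omega;\R)$, and since the previous implication already exhibits the function defined by~\eqref{eq:9} as such a solution, any $u$ satisfying~\ref{it:2} must coincide with it. To prove uniqueness, note that the difference $\bar u$ of two solutions satisfies $\int_0^T\int_\Omega \bar u\,(\partial_t\phi + c\cdot\nabla\phi + A\phi)\,dx\,dt = 0$ for all $\phi\in\Cc1(\mathopen]-\infty,T\mathclose[\times\Omega;\R)$. Given an arbitrary $\psi \in \Cc\infty(\mathopen]0,T\mathclose[\times\Omega;\R)$, one solves the backward transport problem $\partial_t\phi + c\cdot\nabla\phi + A\phi = \psi$ with $\phi(T,\cdot) = 0$ along characteristics — the solution is again given by an explicit Duhamel-type formula along the flow of $c$, is Lipschitz in space-time thanks to~\ref{it:H1} (with $D_x c \in \L\infty$) and the $\BV$/$\L\infty$ assumptions on $A$, and, crucially, has support contained in $\mathopen]0,T\mathclose[\times\Omega$ because $\psi$ does and the characteristics reaching $\partial\Omega$ carry zero terminal data. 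A routine mollification (as in Remark~\ref{rem:c2}) upgrades this $\phi$ to an admissible test function, or alternatively the equivalence $\ref{it:5}\Leftrightarrow\ref{it:2}$ already established lets one use $\W1\infty$ test functions directly. Plugging this $\phi$ in gives $\int_0^T\int_\Omega \bar u\,\psi\,dx\,dt = 0$ for all such $\psi$, hence $\bar u = 0$ a.e.

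The main obstacle I anticipate is the careful bookkeeping in $\ref{it:1}\Rightarrow\ref{it:5}$: making the change of variables along characteristics fully rigorous on the two regions of~\eqref{eq:38} — in particular checking that the map $x \mapsto X(0;t,x)$ is a bi-Lipschitz bijection from $X(t;0,\Omega)$ onto $\Omega$ with Jacobian governed by~\eqref{eq:pxoX}, that the set $X(t;[0,t\mathclose[,\partial\Omega)$ is negligible-boundary-wise so Fubini applies cleanly, and that the telescoping of the total-derivative term along characteristics is licit when $\phi$ is only Lipschitz (which is fine, since $s \mapsto \phi(s,X(s;t,x))$ is then absolutely continuous). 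The construction of the Lipschitz solution to the backward adjoint problem with the correct support property, needed for uniqueness, is the second delicate point; both are, however, essentially the bounded-domain analogues of standard facts on linear transport equations, so I expect the arguments of~\cite[Chapter~3]{bressan-piccoli} and~\cite{siam2018} to transfer with only notational changes.
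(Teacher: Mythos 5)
Your chain $\ref{it:5}\Rightarrow\ref{it:2}\Rightarrow\ref{it:1}\Rightarrow\ref{it:5}$ differs from the paper's $\ref{it:1}\Rightarrow\ref{it:2}\Rightarrow\ref{it:5}\Rightarrow\ref{it:1}$, and the reordering is not cosmetic: it buries the one implication that genuinely requires work, namely enlarging the class of test functions from $\Cc1$ to $\W1\infty$. In your step $\ref{it:2}\Rightarrow\ref{it:1}$ you are only entitled to test against $\phi \in \Cc1 (\mathopen]-\infty,T\mathclose[\times\Omega;\R)$, i.e.\ against functions compactly supported in $\Omega$. But the backward adjoint solution you construct is in general \emph{not} compactly supported in $\Omega$: for $x$ arbitrarily close to $\partial\Omega$ the forward characteristic $s \mapsto X(s;t,x)$ may travel into the interior and cross $\spt \psi$ before time $T$, so $\phi(t,x) \neq 0$ there; your claim that $\phi$ is supported in $\mathopen]0,T\mathclose[\times\Omega$ "because the characteristics reaching $\partial\Omega$ carry zero terminal data" is false. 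Hence this $\phi$ is inadmissible for~\ref{it:2}, and your fallback --- invoking the equivalence $\ref{it:5}\Leftrightarrow\ref{it:2}$ --- is circular, since at that stage of your chain only the trivial inclusion $\ref{it:5}\Rightarrow\ref{it:2}$ is available. The missing ingredient is exactly $\ref{it:2}\Rightarrow\ref{it:5}$, which the paper proves as a separate step by multiplying by spatial cutoffs $\chi_h$ equal to $1$ at distance $\geq 1/h$ from $\partial\Omega$ and controlling the extra term $\phi_h \, \nabla_x \chi_h$; this is not the "routine mollification" of Remark~\ref{rem:c2}, which regularizes in time only. A second gap in the same step: the proposition assumes only $A \in \L\infty([0,T]\times\Omega;\R)$ (no $\BV$ hypothesis appears here), so the factor $\exp\int_t^\tau A\left(s,X(s;t,x)\right)\d{s}$ need not be Lipschitz, or even continuous, in $x$; the adjoint solution is therefore not $\W1\infty$ either. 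The paper circumvents this by first treating $A \in (\C1\cap\W1\infty)([0,T]\times\Omega;\R)$ and then passing to general $A$ via the stability estimate of Lemma~\ref{lem:A}; your proposal omits this approximation entirely.

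In the direction $\ref{it:1}\Rightarrow\ref{it:5}$ your computation is essentially the paper's (change of variables along characteristics on the two regions of~\eqref{eq:38}, telescoping of the total derivative), but the reason you give for the vanishing of the contribution on $X(t;[0,t\mathclose[,\partial\Omega)$ is wrong: a function in $\W1\infty(\mathopen]-\infty,T\mathclose[\times\Omega;\R)$ does \emph{not} vanish on $\partial\Omega$. What makes that region harmless is the representation~\eqref{eq:9} itself: along a characteristic entering from the boundary at time $T(t,x)$ the Duhamel integral starts from the value $0$, so the telescoped derivative has no boundary term at $s = T(t,x)$ regardless of the value of $\phi$ there. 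This is fixable, but as written your argument would only cover $\Cc1$ test functions, which again throws you back onto the missing implication $\ref{it:2}\Rightarrow\ref{it:5}$.
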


\begin{proof}
  \paragraph{\ref{it:1}$\implies $\ref{it:2}}
  The proof exploits arguments similar to~\cite[Lemma~5.1]{CHM2011},
  see also~\cite[Lemma~2.7]{parahyp}.  Indeed, $u$ defined as
  in~\eqref{eq:9} is bounded by Item~\ref{item:4} in
  Lemma~\ref{lem:L1}.

  Let
  $\phi \in \Cc1 (\mathopen]-\infty, T\mathclose[ \times \Omega;
  \R)$. We prove that the equality~\eqref{eq:41} holds with $u$
  defined as in~\eqref{eq:9}. Notice that, for a fixed time
  $t \in [0,T]$, by~\eqref{eq:38} the domain $\Omega$ is contained in
  the disjoint union of $X(t; 0, \Omega)$ and
  $X(t; [0,t\mathclose[ , \partial\Omega)$. The first set accounts for
  the characteristics emanating from the initial datum, the second one
  for those coming from the boundary. Therefore, to prove that the
  integral equality~\eqref{eq:41} holds it is sufficient to verify
  that both the following integral equalities hold:
  \begin{align}
    \label{eq:datoIn}
    & \int_0^T \int_{X (t; 0, \Omega)}
      \left(u
      \left( \partial_t \phi + c \cdot \nabla \phi + A \, \phi \right)
      +
      a \, \phi
      \right)
      \d{x}\d{t}
      + \int_\Omega u_o (x) \, \phi(0,x) \d{x}
      = 0,
    \\
    \label{eq:bordo}
    & \int_0^T \int_{X(t; [0,t\mathclose[, \partial\Omega)}
      \left(u
      \left(\partial_t \phi + c \cdot \nabla \phi  + A \, \phi \right)
      +
      a \, \phi
      \right)
      \d{x}\d{t}
      = 0.
  \end{align}
  In order to prove~\eqref{eq:datoIn}, exploiting the change of
  variables $y = X(0;t,x)$, the first line in~\eqref{eq:9} can be
  rewritten for $x \in X (t,0,\Omega)$ as
  \begin{displaymath}
    u(t,x) =
    \left(u_o(y) + \mathcal{A}(t,y)\right) \frac{\mathscr{A}(t,y)}{J(t,y)}
    \quad \mbox{ where } \quad
    y = X(0;t,x)
  \end{displaymath}
  with
  \begin{align*}
    \mathscr{A} (t,y) = \
    & \exp\left(\int_0^t A\left(\tau, X (\tau; 0,y) \right)\dd{\tau}\right),
    &
      J (t,y) = \
    & \exp\left(\int_0^t \div c \left(\tau, X (\tau; 0,y) \right)\dd{\tau}\right),
    \\
    \mathcal{A}(t,y) = \
    & \int_0^t a\left(\tau,X (\tau;0,y)\right) \frac{J(\tau,y)}{\mathscr{A} (\tau,y)}\dd\tau.
  \end{align*}
  Therefore, the left hand side of~\eqref{eq:datoIn} now reads
  \begin{align*}
    \int_0^T\int_\Omega
    &
      \bigl[
      \left(u_o(y) + \mathcal{A}(t,y)\right) \frac{\mathscr{A}(t,y)}{J(t,y)}
      \left(
      \partial_t \phi \left(t,X(t;0,y)\right) \right.
    \\
    & \quad \left.
      + c \left(t,X(t;0,y)\right) \cdot \nabla \phi \left(t,X(t;0,y)\right) + A \left(t,X(t;0,y)\right) \phi \left(t,X(t;0,y)\right)
      \right) J(t,y)
    \\
    &
      + a\left(t,X(t;0,y)\right) \phi\left(t,X(t;0,y)\right) J(t,y)
      \bigr] \dd{y}\dd{t}
      + \int_\Omega u_o(x) \phi(0,x) \dd{x}
    \\
    = \
    &  \int_0^T\int_\Omega \frac{\d{}}{\d{t}}
      \left[ \left(
      u_o(y) + \mathcal{A}(t,y) \mathscr{A}(t,y)
      \right)
      \phi \left(t,X(t;0,y)\right)
      \right]\dd{y}\dd{t}
      + \int_\Omega u_o(x) \phi(0,x) \dd{x}
    \\
    =\
    & - \int_\Omega u_o(y) \phi(0,y) \dd{y}+ \int_\Omega u_o(x) \phi(0,x) \dd{x}
      - \int_\Omega \mathcal{A} (0,y) \mathscr{A}(0,y) \phi(0,y) \dd{y}
    \\
    = \
    & 0,
  \end{align*}
  since, for all $y \in \Omega$, $\phi(T,y) = 0 $ and, by definition,
  $\mathcal{A}(0,y) = 0$.

  Pass now to~\eqref{eq:bordo}. Here, for all $t \in [0,T]$, we use
  the change of variables
  \begin{displaymath}
    \begin{array}{ccc}
      \Omega_{\tau,x}^t
      &
        \to
      & \Omega_{\sigma,y}^t
      \\
      (\tau,x)
      & \mapsto
      & (\sigma,y)
    \end{array}
    \quad \mbox{ where }\quad
    \begin{array}{rcl}
      \Omega_{\tau,x}^t
      & =
      & \left\{
        (\tau,x) \colon \tau \in [T (t,x), t]
        \mbox{ and }
        x = X (t, [0,t\mathclose[,\partial\Omega)
        \right\}
      \\
      \Omega_{\sigma,y}^t
      & =
      & \left\{
        (\sigma,y) \colon \sigma \in [0, t]
        \mbox{ and }
        y = X (\sigma, [0,\sigma\mathclose[,\partial\Omega)
        \right\}
      \\
      \sigma
      & =
      & \tau
      \\
      y
      & =
      & X (\tau;t,x)
    \end{array}
  \end{displaymath}
  whose Jacobian, which also depends on $t$, is
  $\left. H (t,\sigma,y) \middle/ H (\sigma,\sigma,y) \right.$ where
  we set
  \begin{displaymath}
    H (t,\sigma,y)
    =
    \exp
    \int_0^t \div c \left(s,X (s;\sigma,y)\right) \d{s}
    \quad \mbox{ and }\quad
    \widehat{A} (t,\sigma,y)
    =
    \exp
    \int_0^t A\left(s, X (s; \sigma,y) \right)\d{\sigma} \,.
  \end{displaymath}
  Using~\eqref{eq:9}, we compute now the right hand side
  in~\eqref{eq:bordo} as follows:
  \begin{align*}
    &\int_0^T \int_{X(t; [0,t\mathclose[, \partial\Omega)}
      u
      \left(\partial_t \phi + c \cdot \nabla \phi  + A \, \phi \right) (t,x)
      \d{x}\d{t}
      +
      \int_0^T \int_{X(t; [0,t\mathclose[, \partial\Omega)}
      a \, \phi
      \d{x}\d{t}
    \\
    = \
    & \int_0^T \int_{X(t; [0,t\mathclose[, \partial\Omega)}
      \int_{T(t,x)}^t
      a\left(\tau,X(\tau;t,x)\right)\mathcal{E} (\tau,t,x) \d\tau
      \left(\partial_t \phi + c \cdot \nabla \phi  + A \, \phi \right)
      \d{x}\d{t}
    \\
    & \qquad +
      \int_0^T \int_{X(t; [0,t\mathclose[, \partial\Omega)}
      a (t,x) \, \phi (t,x)
      \d{x}\d{t}
    \\
    = \
    & \int_0^T
      \int_0^t
      \int_{X (\sigma,[0,\sigma\mathclose[,\partial\Omega)}
      a (\sigma,y) \,
      \dfrac{\widehat{A} (t,\sigma,y)}{\widehat{A} (\sigma,\sigma,y)}
    \\
    & \qquad \qquad \qquad \times
      \left(
      \frac{\d{\phi\left(t, X (t;\sigma,y) \right)}}{\d{t}}
      + A\left(t, X (t;\sigma,y)\right) \, \phi\left(t, X (t;\sigma,y)\right)
      \right)
      \d{y} \d{\sigma} \d{t}
    \\
    & \qquad +
      \int_0^T \int_{X(t; [0,t\mathclose[, \partial\Omega)}
      a (t,x) \, \phi (t,x)
      \d{x}\d{t}
    \\
    = \
    & \int_0^T
      \dfrac{\d~}{\d{t}}
      \left(
      \int_0^t
      \int_{X (\sigma,[0,\sigma\mathclose[,\partial\Omega)}
      a (\sigma,y) \,
      \dfrac{\widehat{A} (t,\sigma,y)}{\widehat{A} (\sigma,\sigma,y)} \,
      \phi\left(t, X (t;\sigma,y) \right)
      \d{y} \d{\sigma}\right) \d{t}
    \\
    = \
    & 0,
  \end{align*}
  since $\phi(T, \cdot) \equiv 0$.

\paragraph{\ref{it:2}$\implies $\ref{it:5}}
Fix
$\phi \in \W1\infty (\mathopen]-\infty,T \mathclose[ \times \Omega;
\reali)$. A standard construction, see~\cite[\S~1.14]{Giusti}, ensures
the existence of a sequence of functions
$\phi_h \in \Cc\infty(\reali^{n+1}; \reali_+)$ such that
$\phi_h \underset{h\to+\infty}{\longrightarrow} \phi$,
$\partial_t \phi_h \underset{h\to+\infty}{\longrightarrow} \partial_t
\phi$ and
$\nabla_x \phi_h \underset{h\to+\infty}{\longrightarrow} \nabla_x
\phi$ in
$\Lloc1 (\mathopen]-\infty,T \mathclose[ \times \Omega; \reali)$ and
$\Lloc1 (\mathopen]-\infty,T \mathclose[ \times \Omega; \reali^n)$.
Call $\chi_h$ a function in $\Cc\infty (\reali^n; \reali)$ such that
$\chi_h (x) = 1$ for all $x \in \Omega$ such that
$B(x,1/h) \subseteq \Omega$ and
$\norma{\nabla_x \chi_h} \leq 2\sqrt{n}\, h$ for all $x \in \reali^n$.

Then, we have
$\phi_h \, \chi_h \in \Cc1 (\mathopen]-\infty, T \mathclose[ \times
\Omega; \reali)$. Moreover,
$\phi_h \, \chi_h \underset{h\to+\infty}{\longrightarrow} \phi$ and
$\partial_t (\phi_h \, \chi_h) \underset{h\to+\infty}{\longrightarrow}
\partial_t \phi$ in
$\Lloc1 (\mathopen]-\infty,T \mathclose[ \times \Omega;
\reali)$. Concerning the space gradient, we have
\begin{displaymath}
  \nabla_x (\phi_h \, \chi_h)
  =
  \nabla_x \phi_h \; \chi_h + \phi_h \; \nabla_x \chi_h
  \quad \mbox{ and }
  \begin{array}{r@{\,}c@{\,}l@{\qquad}l}
    \nabla_x \phi_h \; \chi_h
    & \underset{h\to+\infty}{\longrightarrow}
    & \nabla_x \phi
    & \mbox{in } \Lloc1 (\mathopen]-\infty,T \mathclose[ \times \Omega; \reali) \,;
    \\
    \phi_h \; \nabla_x \chi_h
    & \underset{h\to+\infty}{\longrightarrow}
    & \phi
    & \mbox{a.e. in } \mathopen]-\infty,T \mathclose[ \times \Omega \,.
  \end{array}
\end{displaymath}
Therefore, for all $h$ by~\ref{it:2} we have
\begin{align*}
  0
  = \
  & \int_0^T \int_\Omega
    \left(u
    \left(\partial_t (\phi_h \, \chi_h) + c \cdot \nabla (\phi_h \, \omega_h)  \right)
    +
    \left(A \, u(t,x) + a(t,x) \right)(\phi_h \, \chi_h)
    \right)
    \d{x}\d{t}
  \\
  & + \int_\Omega u_o (x) \, (\phi_h \, \chi_h) (0,x) \d{x}
\end{align*}
and, by the Dominated Convergence Theorem, \ref{it:5} follows.

\paragraph{\ref{it:5}$\implies $\ref{it:1}} Inspired
by~\cite[Lemma~5.1]{CHM2011}, we first consider the case
$A \in (\C1 \cap \W1\infty) ([0,T]\times\Omega;\reali)$. Assume $u$
satisfies~\ref{it:5} and call $u_*$ the function defined
in~\eqref{eq:9}. Then, by the above implications
\ref{it:1}$\Rightarrow$\ref{it:2}$\Rightarrow$\ref{it:5}, the
difference $U = u-u_*$ satisfies for all test functions
$\tilde\phi \in \W1\infty (\mathopen]-\infty, T\mathclose[ \times
\Omega;\reali)$ the integral equality
\begin{equation}
  \label{eq:25}
  \int_0^T \int_\Omega
  U
  \left(\partial_t \tilde\phi + c  \cdot \nabla \tilde\phi
    +
    A \, \tilde\phi
  \right)
  \d{x}\d{t}
  = 0 \,.
\end{equation}
Proceed now exactly as in~\cite[Lemma~5.1]{CHM2011}, choosing
$\tau \in \mathopen]0, T\mathclose]$, a sequence
$\chi_h \in \Cc1 (\reali; \reali_+)$ with $\chi_h (t) = 1$ for all
$t \in [1/h, \tau-1/h]$ and $\modulo{\chi'_h} \leq 2h$. If
$\phi \in \W1\infty (\mathopen]-\infty, T\mathclose[ \times
\Omega;\reali)$ then,
$(\phi \, \chi_h) \in \W1\infty (\mathopen]-\infty, T\mathclose[
\times \Omega;\reali)$. Choosing $\phi \, \chi_h$ as $\tilde \phi$
in~\eqref{eq:25}, and passing to the limit $h \to +\infty$ via the
Dominated Convergence Theorem, we get
\begin{equation}
  \label{eq:26}
  \int_0^\tau \int_\Omega
  U
  \left(
    \partial_t \phi + c  \cdot \nabla \phi
    +
    A \, \phi
  \right)
  \d{x}\d{t}
  -
  \int_\Omega U (\tau,x) \, \phi (\tau,x) \d{x}
  = 0 \,.
\end{equation}
Fix an arbitrary $\eta \in \Cc1 (\Omega;\reali)$ and let $\phi$ solve
\begin{displaymath}
  \left\{
    \begin{array}{l@{\qquad}r@{\,}c@{\,}l}
      \partial_t \phi + c \cdot \nabla \phi + A\, \phi = 0
      & (t,x)
      & \in
      & \Omega
      \\
      \phi (t,\xi) = 0
      & (t,\xi)
      & \in
      & \partial\Omega
      \\
      \phi (\tau,x) = \eta (x)
      & (\tau,x)
      & \in
      & \Omega\,.
    \end{array}
  \right.
\end{displaymath}
Note that $\phi$ can be computed through integration along (backward)
characteristics and hence
$\phi \in \W1\infty (\mathopen]-\infty, T\mathclose[ \times \Omega;\reali)$.
With this choice, \eqref{eq:26} yields
$\int_\Omega U (\tau,x) \, \eta (x) \d{x} = 0$ for all
$\eta \in \Cc1 (\Omega;\reali)$, so that $U (\tau,x) =0$ for all
$x \in \Omega$. By the arbitrariness of $\tau$, we have $U \equiv 0$,
hence $u = u_*$.

Let now $A \in \L\infty ([0,T] \times \Omega;\reali)$, call $u_*$ the
function constructed in~\eqref{eq:9} and assume there is a function
$u$ satisfying~\ref{it:5}.  Construct a sequence
$A_h \in (\C1 \cap \W1\infty) ([0,T]\times\Omega;\reali)$ such that
$A_h \underset{h\to+\infty}{\longrightarrow} A$ in
$\L1 ([0,T]\times\Omega;\reali)$. Call $u_h$ the function constructed
as in~\eqref{eq:9} with $A_h$ in place of $A$. For any $t \in [0,T]$,
we have $u_h (t) \underset{h\to+\infty}{\longrightarrow} u_* (t)$ in
$\L1 (\Omega; \reali)$, by Lemma~\ref{lem:A}. Moreover, for all
$\phi \in \W1\infty (\mathopen]-\infty,T\mathclose[ \times
\Omega;\reali)$,
\begin{align*}
  0
  = \
  & \int_0^T \int_\Omega
    \left(u
    \left(\partial_t \phi  + c  \cdot \nabla \phi  \right)
    +
    \left(A  \, u + a \right)\phi
    \right)
    \d{x}\d{t}
    + \int_\Omega u_o (x) \, \phi(0,x) \d{x}
  \\
  &-
    \int_0^T \int_\Omega
    \left(u_h
    \left(\partial_t \phi  + c  \cdot \nabla \phi  \right)
    +
    \left(A_h  \, u_h + a \right)\phi
    \right)
    \d{x}\d{t}
    - \int_\Omega u_o (x) \, \phi(0,x) \d{x}
  \\
  = \
  & \int_0^T \int_\Omega
    (u - u_h)
    \left(\partial_t \phi  + c  \cdot \nabla \phi +A_h \, \phi \right)
    \d{x}\d{t}
    +
    \int_0^T \int_\Omega (A- A_h) \, u \, \phi \d{x}\d{t}.
\end{align*}
The latter summand vanishes since $A_h \to A$ in $\L1$. The former
summand, thanks to the regularity of $A_h$, can be treated by the
procedure above, obtaining, for all $\eta \in \Cc1 (\Omega;\reali)$
and for a sequence of real numbers $\epsilon_h$ converging to $0$,
\begin{displaymath}
  0
  =
  \int_\Omega
  \left(u (\tau,x) - u_h (\tau,x) \right)
  \eta (x)
  \d{x}
  + \epsilon_h \,.
\end{displaymath}
The above relation ensures that $u_h (\tau,x) \to u (\tau,x)$ for
a.e.~$x \in \Omega$ as $h \to +\infty$. Therefore, for all
$t \in \mathopen]0,T\mathclose]$,
\begin{displaymath}
  \norma{u_* (t) - u (t)}_{\L1 (\Omega;\reali)}
  \leq
  \norma{u_* (t) - u_h (t)}_{\L1 (\Omega;\reali)}
  +
  \norma{u_h (t) - u (t)}_{\L1 (\Omega;\reali)}
  \underset{h\to+\infty}{\longrightarrow}
  0 \,,
\end{displaymath}
completing the proof.
\end{proof}

\begin{definition}
  \label{def:Hyp}
  A map $u \in \L\infty ([0,T]\times\Omega; \reali)$ is a solution
  to~\eqref{eq:12} if it satisfies any of the requirements~\ref{it:1},
  \ref{it:2} or~\ref{it:5} in~\Cref{prop:def}.
\end{definition}

By techniques similar to those in~\cite{parahyp}, one can verify that
a solution to~\eqref{eq:12} in the sense of~\Cref{def:Hyp} is a
\emph{weak entropy solution} also in any of the
senses~\cite{MR2322018, MR1884231}, or~\cite{MR542510} in the $\BV$
case, see~\cite{MR3819847} for a comparison. Here, as is well known,
the linearity of the convective part in~\eqref{eq:12} allows to avoid
the introduction of any entropy condition, as also remarked
in~\cite{KPS2018bounded}.

\begin{lemma}
  \label{lem:c}
  Let~\ref{it:omega}--\ref{item:H0}--\ref{it:H2}--\ref{it:H3}
  hold. Fix $c_1, c_2$ satisfying~\ref{it:H1} and moreover, for
  $i=1,2$, $c_i (t) \in \C2 (\Omega;\reali^n)$ for all $t \in [0,T]$
  and $\grad \div c_i \in \L1 ([0,T]\times \Omega; \R^n)$.  Then, the
  maps $u_1,u_2$ defined in~\eqref{eq:9} satisfy
  \begin{displaymath}
    \norma{u_2 (t) - u_1 (t)}_{\L1 (\Omega;\reali)}
    \le
    \O
    \left(
      \norma{c_1 - c_2}_{\L1 ([0,t]; \L\infty (\Omega;\reali^n))}
      +
      \norma{\div(c_1 - c_2)}_{\L1 ([0,t]; \L\infty (\Omega;\reali))}
    \right)
  \end{displaymath}
  and a precise expression for the constant $\O$ is provided in the
  proof.
\end{lemma}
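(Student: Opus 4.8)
The plan is to regard $u_1$ as an approximate solution of the balance law with velocity field $c_2$ and then invoke the $\L1$ estimate of Lemma~\ref{lem:L1}. Since $u_1$ solves $\pt u_1 + \div(c_1 u_1) = A u_1 + a$, it also solves $\pt u_1 + \div(c_2 u_1) = A u_1 + a - \div\bigl((c_1-c_2) u_1\bigr)$, so that $U := u_2 - u_1$ solves the same balance law with velocity $c_2$, vanishing initial datum and source $\div\bigl((c_1-c_2)u_1\bigr) = (c_1-c_2)\cdot\nabla u_1 + u_1\,\div(c_1-c_2)$. Treating this as an $\L1$ source, item~\ref{item:3} in Lemma~\ref{lem:L1} gives
\[
  \norma{U(t)}_{\L1(\Omega)}
  \le
  \exp\bigl(t\,\norma{A}_{\L\infty([0,t]\times\Omega)}\bigr)
  \int_0^t \Bigl(
    \norma{(c_1-c_2)(\tau)}_{\L\infty}\,\tv\bigl(u_1(\tau)\bigr)
    + \norma{\div(c_1-c_2)(\tau)}_{\L\infty}\,\norma{u_1(\tau)}_{\L1}
  \Bigr)\dd\tau,
\]
and the claim follows after inserting the $\L1$ bound of item~\ref{item:3} in Lemma~\ref{lem:L1} for $\norma{u_1(\tau)}_{\L1}$ and the total variation bound of Lemma~\ref{lem:tv} for $\tv\bigl(u_1(\tau)\bigr)$: the constant $\O$ is the product of $\exp\bigl(t\,\norma{A}_{\L\infty([0,t]\times\Omega)}\bigr)$ with the maximum over $\tau\in[0,t]$ of the two bounds just mentioned. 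Only the extra regularity of $c_1$ -- that is, $c_1(t)\in\C2(\Omega;\R^n)$ and $\grad\div c_1\in\L1([0,T]\times\Omega;\R^n)$ -- is used here, through Lemma~\ref{lem:tv}; one could argue symmetrically with $c_2$.

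The obstacle is that $u_1$ is merely $\BV$ in space, so $\nabla u_1$ is a measure and $\div\bigl((c_1-c_2)u_1\bigr)$ is not an $\L1$ function, which forbids a direct appeal to Lemma~\ref{lem:L1}. I would circumvent this by regularizing $u_o$, $A$ and $a$ -- but \emph{not} $c_1$, $c_2$ -- exactly as in the proof of Lemma~\ref{lem:tv}: take $u_o^h$, $A_h$, $a_h$ satisfying \eqref{eq:27}--\eqref{eq:29} and let $u_i^h$ be given by \eqref{eq:9} with $c_i$ and these data. By the regularity of $c_i$ and the differentiation of \eqref{eq:9} carried out in the proof of Lemma~\ref{lem:tv}, $u_i^h(t)\in\C1(\Omega;\R)$, with $\norma{\nabla u_i^h(t)}_{\L1} = \tv\bigl(u_i^h(t)\bigr)$. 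Hence $U^h := u_2^h - u_1^h$ is $\C1$ in space and solves $\pt U^h + \div(c_2 U^h) = A_h U^h + G^h$ with $U^h(0) = 0$, where now $G^h := \div\bigl((c_1-c_2)u_1^h\bigr) = (c_1-c_2)\cdot\nabla u_1^h + u_1^h\,\div(c_1-c_2)$ lies in $\L1([0,T]\times\Omega;\R)$, with $\int_0^t\norma{G^h(\tau)}_{\L1}\dd\tau$ bounded by the integral displayed above, with $u_1^h$ in place of $u_1$. Integrating along the characteristics of $c_2$ identifies $U^h$ with the function \eqref{eq:9} associated to $c_2$, source $G^h$ and zero initial datum, so item~\ref{item:3} in Lemma~\ref{lem:L1} applies to $U^h$ and yields the analogue of the displayed estimate for $\norma{U^h(t)}_{\L1}$, with $\norma{A_h}_{\L\infty}\le\norma{A}_{\L\infty}$.

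Finally I would let $h\to+\infty$. On the left, $u_i^h(t)\to u_i(t)$ in $\L1(\Omega)$ for every $t$: this follows from Lemma~\ref{lem:A} for the dependence on $A$ and from item~\ref{item:3} in Lemma~\ref{lem:L1}, applied to differences, for the dependence on $u_o$ and $a$; crucially, since $c_1$ and $c_2$ are held fixed, no continuity with respect to the velocity field is invoked and there is no circularity. Hence $U^h(t)\to u_2(t)-u_1(t)$ in $\L1(\Omega)$. On the right, $\norma{u_1^h(\tau)}_{\L1}$ is bounded uniformly in $h$ by the item~\ref{item:3} in Lemma~\ref{lem:L1} bound evaluated at the original data (using the uniform controls in \eqref{eq:27}--\eqref{eq:29}), and $\tv\bigl(u_1^h(\tau)\bigr) = \norma{\nabla u_1^h(\tau)}_{\L1}$ is bounded uniformly in $h$ by the Lemma~\ref{lem:tv} bound for $c_1$ and the original data; taking $\limsup_h$ of the estimate for $\norma{G^h}_{\L1([0,t]\times\Omega)}$ and passing to the limit in the inequality for $\norma{U^h(t)}_{\L1}$ yields the assertion, with $\O$ the explicit constant described above. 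The main difficulty is organizing the regularization so that each $u_i^h$ is of class $\C1$, the total variation of $u_1^h$ stays uniformly bounded in $h$ (for which the precise form of Lemma~\ref{lem:tv} together with the uniform estimates \eqref{eq:27}--\eqref{eq:29} are essential), and the limit can be taken without invoking the very statement being proved.
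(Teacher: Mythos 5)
Your proposal is correct and follows essentially the same route as the paper's proof: write $u_2-u_1$ as the solution of the balance law with velocity $c_2$, zero initial datum and commutator source $\div\left((c_1-c_2)\,u_1\right)$; regularize $u_o$, $A$ and $a$ (but not the velocity fields) via \eqref{eq:27}--\eqref{eq:29} so that the source is a genuine $\L1$ function; apply~\ref{item:3} of \Cref{lem:L1} together with the total variation bound of \Cref{lem:tv} for $\norma{\nabla u_1^h}_{\L1(\Omega;\R^n)}$; and pass to the limit $h\to+\infty$. The only (harmless) difference is that you justify the convergence $u_i^h\to u_i$ explicitly through \Cref{lem:A} and~\ref{item:3} of \Cref{lem:L1}, where the paper appeals directly to the construction of the regularizing sequences.
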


\begin{proof}
  Following the proof of~\Cref{lem:tv}, we first regularize the
  initial datum $u_o$ as in~\eqref{eq:27} and the functions $A$ and
  $a$ appearing in the source term as in~\eqref{eq:29}, obtaining
  $u_1^h$ and $u_2^h$ by~\eqref{eq:9}. By~\Cref{prop:def}, the
  difference $u_2^h - u_1^h$ solves
  \begin{displaymath}
    \left\{
      \begin{array}{l}
        \partial_t (u_2^h - u_1^h)
        +
        \div \left(c_2 (u_2^h-u_1^h)\right)
        =
        A_h (u_2^h-u_h) + \alpha_h
        \\
        (u_2^h - u_1^h) (0) = 0
      \end{array}
    \right.
    \mbox{ where }
    \alpha_h = - \div\left((c_2 - c_1) u_1^h\right)
  \end{displaymath}
  in the sense of~\Cref{def:Hyp}.  Apply~\Cref{item:3}
  in~\Cref{lem:L1} to get
  \begin{equation}
    \label{eq:20}
    \norma{u_2^h(t) - u_1^h(t)}_{\L1 (\Omega; \R)}
    \leq
    \norma{\div\left((c_2 - c_1) u_1^h\right)}_{\L1 ([0,t]\times \Omega; \R)}
    \exp\left(\norma{A}_{\L\infty ([0,t]\times\Omega; \R)}t\right),
  \end{equation}
  where we use the estimate
  $\norma{A_h(\tau)}_{\L\infty (\Omega; \R)} \le
  \norma{A(\tau)}_{\L\infty (\Omega; \R)}$ for all $\tau \in [0,T]$.
  Observe that
  \begin{align*}
    & \norma{\div\left(\left(c_2 (\tau)  - c_1 (\tau) \right)u_1^h (\tau) \right)}_{\L1 (\Omega;\R)}
    \\
    \le \
    & \norma{u_1^h (\tau)}_{\L1 (\Omega;\R)}
      \norma{\div\left(c_2 (\tau)  - c_1 (\tau)\right)}_{\L\infty (\Omega;\R)}
      +
      \norma{c_2 (\tau) -c_1 (\tau)}_{\L\infty (\Omega; \R^n)}
      \norma{\nabla u_1^h (\tau)}_{\L1 (\Omega; \R)}
    \\
    \le \
    & \left( \norma{u_o^h}_{\L1 (\Omega; \R)}
      + \norma{a_h}_{\L1([0,\tau] \times \Omega; \R)}\right)
      \exp\left(\norma{A_h}_{\L\infty ([0,\tau] \times \Omega; \R)} t\right)
      \norma{\div\left(c_2 (\tau)  - c_1 (\tau)\right)}_{\L\infty (\Omega;\R)}
    \\
    & + \norma{c_2 (\tau) -c_1 (\tau)}_{\L\infty (\Omega; \R^n)}
      \exp
      \left(
      \norma{A_h}_{\L1([0,\tau];\L\infty (\Omega; \R^n))}
      +
      \norma{D_x c_1}_{\L1([0,\tau];\L\infty (\Omega; \R^{n\times n}))}
      \right)
    \\
    & \times \Biggl(
      \norma{\grad u_o^h}_{\L1 (\Omega;\R)}
      + \int_0^\tau \norma{\grad a_h (s)}_{\L1 (\Omega; \R^n)} \dd{s}
    + \left(\norma{u_o^h}_{\L\infty (\Omega; \R)}
      \!+\! \int_0^\tau \norma{ a_h (s)}_{\L\infty(\Omega; \R)}\dd{s}\right)
    \\
    & \qquad\quad \times \left(\norma{\grad A_h}_{\L1 ([0,\tau] \times \Omega; \R^n)}
      + \norma{\grad \div c_1 }_{\L1 ([0,\tau] \times \Omega; \R^n)}\right)
      \Biggr)
    \\
    \le \
    & \left( \norma{u_o^h}_{\L1 (\Omega; \R)}
      + \norma{a_h}_{\L1([0,\tau] \times \Omega; \R)}\right)
      \exp\left(\norma{A}_{\L\infty ([0,\tau] \times \Omega; \R)} t\right)
      \norma{\div\left(c_2 (\tau)  - c_1 (\tau)\right)}_{\L\infty (\Omega;\R)}
    \\
    & + \norma{c_2 (\tau) -c_1 (\tau)}_{\L\infty (\Omega; \R^n)}
      \exp
      \left(
      \norma{A}_{\L1([0,\tau];\L\infty (\Omega; \R^n))}
      +
      \norma{D_x c_1}_{\L1([0,\tau];\L\infty (\Omega; \R^{n\times n}))}
      \right)
    \\
    & \times \Biggl(
      \tv(u_o) + \mathcal{O}(1)\norma{u_o}_{\L\infty (\Omega;\R)}
      + \int_0^\tau \norma{\grad a_h (s)}_{\L1 (\Omega; \R^n)} \dd{s}
    \\
    & \quad \!+\! \left(\norma{u_o}_{\L\infty (\Omega; \R)}
      \!+\! \int_0^\tau \norma{ a (s)}_{\L\infty(\Omega; \R)}\dd{s}\right)\!\!
      \left(\norma{\grad A_h}_{\L1 ([0,\tau] \times \Omega; \R^n)}
      \!+\!
      \norma{\grad \div c_1 }_{\L1 ([0,\tau] \times \Omega; \R^n)}\right)\!\!
      \Biggr),
  \end{align*}
  where we used~\Cref{item:3} in~\Cref{lem:L1}, \Cref{lem:tv} and the
  hypotheses~\eqref{eq:27}--\eqref{eq:29} on the regularizing
  sequences $u_o^h$, $A_h$ and $a_h$.  By the triangular inequality
  and the above computations,
  \begin{align}
    \nonumber
    & \norma{u_2(t) - u_1(t)}_{\L1 (\Omega; \R)}
    \\ \nonumber
    \le\
    &
      \norma{u_2(t) - u_2^h(t)}_{\L1 (\Omega; \R)}
      +
      \norma{u_2^h(t) - u_1^h(t)}_{\L1 (\Omega; \R)}
      +
      \norma{u_1(t) - u_1^h(t)}_{\L1 (\Omega; \R)}
    \\ \label{eq:21}
    \le\
    & \norma{u_2(t) - u_2^h(t)}_{\L1 (\Omega; \R)}
    \\\label{eq:22}
    & \begin{aligned}
      & + \left( \norma{u_o^h}_{\L1 (\Omega; \R)}
        + \norma{a_h}_{\L1([0,t] \times \Omega; \R)}\right)
      \\
      & \quad \times \exp\left(\norma{A}_{\L\infty ([0,t] \times \Omega; \R)} t\right)
      \int_0^t \norma{\div\left(c_2 (\tau)  - c_1 (\tau)\right)}_{\L\infty (\Omega;\R)} \dd\tau
    \end{aligned}
    \\ \nonumber
    & + \int_0^t \norma{c_2 (\tau) -c_1 (\tau)}_{\L\infty (\Omega; \R^n)} \dd\tau
      \,
      \exp
      \left(
      \norma{A}_{\L1([0,t];\L\infty (\Omega; \R^n))}
      +
      \norma{D_x c_1}_{\L1([0,t];\L\infty (\Omega; \R^{n\times n}))}
      \right)
    \\\label{eq:23}
    & \quad \times \Biggl(
      \tv(u_o) + \mathcal{O}(1)\norma{u_o}_{\L\infty (\Omega;\R)}
      + \int_0^t \norma{\grad a_h (s)}_{\L1 (\Omega; \R^n)} \dd{s}
    \\\nonumber
    & \quad\, +\! \left(\norma{u_o}_{\L\infty (\Omega; \R)}
      \!+\! \int_0^t\! \norma{ a (s)}_{\L\infty(\Omega; \R)}\dd{s}\right)\!\!
      \left(\norma{\grad A_h}_{\L1 ([0,t] \times \Omega; \R^n)}
      \!+\! \norma{\grad \div c_1 }_{\L1 ([0,t] \times \Omega; \R^n)}\right)\!\!
      \Biggr)
    \\\label{eq:24}
    & +
      \norma{u_1(t) - u_1^h(t)}_{\L1 (\Omega; \R)},
  \end{align}
  and in the limit $h \to +\infty$ we treat each term separately. By
  construction, \eqref{eq:21} and~\eqref{eq:24} converge to zero as
  $h \to + \infty$. By the hypotheses~\eqref{eq:27} on $u_o^h$
  and~\eqref{eq:29} on $A_h$ and $a_h$, in the limit we thus obtain
  \begin{align*}
    & \norma{u_2(t) - u_1(t)}_{\L1 (\Omega; \R)}
    \\
    \le \
    & \left( \norma{u_o}_{\L1 (\Omega; \R)}
      + \norma{a}_{\L1([0,t] \times \Omega; \R)}\right)
      \exp\left(\norma{A}_{\L\infty ([0,t] \times \Omega; \R)} t\right)
      \int_0^t
      \norma{\div\left(c_2 (\tau)  - c_1 (\tau)\right)}_{\L\infty (\Omega;\R)} \dd\tau
    \\
    & + \int_0^t \norma{c_2 (\tau) -c_1 (\tau)}_{\L\infty (\Omega; \R^n)} \dd\tau
      \,
      \exp
      \left(
      \norma{A}_{\L1([0,t];\L\infty (\Omega; \R^n))}
      +
      \norma{D_x c_1}_{\L1([0,t];\L\infty (\Omega; \R^{n\times n}))}
      \right)
    \\
    & \times \Biggl(
      \tv(u_o) + \mathcal{O}(1)\norma{u_o}_{\L\infty (\Omega;\R)}
      + \int_0^t \tv\left( a (s)\right) \dd{s}
    \\
    & \quad + \left(\norma{u_o}_{\L\infty (\Omega; \R)}
      \!+\! \int_0^t \norma{ a (s)}_{\L\infty(\Omega; \R)}\dd{s}\right)\!\!
      \left( \int_0^t\tv\left( A (s)\right)\dd{s}
      + \norma{\grad \div c_1 }_{\L1 ([0,t] \times \Omega; \R^n)}\right)\!\!
      \Biggr),
  \end{align*}
  concluding the proof.
\end{proof}

\begin{lemma}
  \label{lem:positive}
  Let~\ref{it:omega}--\ref{it:H1} hold. Assume moreover that
  $u_o \in \L\infty (\Omega;\reali)$, with $u_o\geq 0$,
  $A \in \L\infty ([0,T]\times\Omega; \reali)$ and
  $a \in \L1 \left([0,T]; \L\infty(\Omega; \reali) \right)$, with
  $a \geq 0$. Then, the solution $u$ is positive.
\end{lemma}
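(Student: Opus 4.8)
The plan is to obtain positivity directly from the representation formula~\eqref{eq:9}. By~\Cref{prop:def} and~\Cref{def:Hyp}, the solution $u$ to~\eqref{eq:12} coincides with the function constructed by integration along characteristics, so it is enough to check that the right-hand side of~\eqref{eq:9} is nonnegative for all $(t,x) \in [0,T] \times \Omega$.

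First I would observe that the exponential weight $\mathcal{E}(\tau,t,x)$ defined in~\eqref{eq:13} is strictly positive for every admissible choice of its arguments. Indeed, under~\ref{it:H1} and the hypothesis $A \in \L\infty([0,T]\times\Omega;\reali)$, the map $s \mapsto A\bigl(s,X(s;t,x)\bigr) - \div c\bigl(s,X(s;t,x)\bigr)$ is bounded on $[\tau,t]$, hence integrable, so that $\mathcal{E}(\tau,t,x) = \exp\bigl(\int_\tau^t (\cdots)\dd{s}\bigr)$ is a well defined strictly positive number, irrespective of the sign of $A - \div c$. This is the only structural fact needed.

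With this at hand the conclusion is immediate. If $x \in X(t;0,\Omega)$, then $u(t,x)$ is the sum of $u_o\bigl(X(0;t,x)\bigr)\,\mathcal{E}(0,t,x) \geq 0$, since $u_o \geq 0$ and $\mathcal{E} > 0$, and of $\int_0^t a\bigl(\tau,X(\tau;t,x)\bigr)\,\mathcal{E}(\tau,t,x)\dd\tau \geq 0$, since $a \geq 0$ and $\mathcal{E} > 0$; hence $u(t,x) \geq 0$. If instead $x \in X(t;[0,t\mathclose[,\partial\Omega)$, then $u(t,x) = \int_{T(t,x)}^t a\bigl(\tau,X(\tau;t,x)\bigr)\,\mathcal{E}(\tau,t,x)\dd\tau \geq 0$ by the same argument. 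Since by~\eqref{eq:38} these two sets exhaust $\Omega$, we obtain $u(t,x) \geq 0$ for all $(t,x) \in [0,T]\times\Omega$, which is the claim.

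There is essentially no obstacle: the proof reduces to noting that $\mathcal{E}$ is an exponential, hence free of any sign constraint, so that each term appearing in~\eqref{eq:9} is a product or an integral of nonnegative quantities. The only point worth spelling out is the preliminary appeal to~\Cref{prop:def}, which guarantees that the solution in the sense of~\Cref{def:Hyp} is precisely the one furnished by~\eqref{eq:9}, so that positivity of the explicit formula transfers to the solution.
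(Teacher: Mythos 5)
Your proposal is correct and follows exactly the paper's argument: the paper dispatches this lemma with the single remark that positivity is an immediate consequence of the representation~\eqref{eq:9}, and your write-up simply spells out why each term there is nonnegative (strict positivity of the exponential $\mathcal{E}$, the sign hypotheses on $u_o$ and $a$, and the decomposition~\eqref{eq:38} of $\Omega$). Nothing further is needed.
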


\noindent The proof is an immediate consequence of the
representation~\eqref{eq:9}.

\begin{lemma}
  \label{lem:tcont}
  Let~\ref{item:H0}--\ref{it:H1}--\ref{it:H2}--\ref{it:H3}
  hold. Assume, moreover, that $c (t) \in \C2 (\Omega;\reali^n)$ for
  all $t \in [0,T]$ and
  $\grad \div c \in \L1 ([0,T]\times \Omega; \R^n)$. If
  $u \in \L\infty ([0,T]\times \Omega; \reali )$ is as
  in~\eqref{eq:9}, then $u$ is $\L1$--Lipschitz continuous in time:
  for all $t_1,t_2 \in [0,T]$, with $t_1< t_2$
  \begin{equation}
    \label{eq:42}
    \norma{u (t_2) - u (t_1)}_{\L1 (\Omega;\reali)}
    \leq
    \O (t_2 - t_1)
  \end{equation}
  where $\O$ depends on norms of $c,A,a$ on the interval $[0, t_2]$
  and of $u_o$.
\end{lemma}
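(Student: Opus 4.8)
The plan is to read \eqref{eq:42} off the representation formula \eqref{eq:9}, exploiting the cocycle (semigroup) identities of the characteristic flow, $X(s;t_2,x) = X\bigl(s;t_1,X(t_1;t_2,x)\bigr)$ for $s\le t_1\le t_2$, and of the weight, $\mathcal{E}(\tau,t_2,x) = \mathcal{E}\bigl(\tau,t_1,X(t_1;t_2,x)\bigr)\,\mathcal{E}(t_1,t_2,x)$ for $\tau\le t_1$. Fix $0\le t_1 < t_2 \le T$ and set $\delta = t_2-t_1$. Since $\norma{u(t_2)-u(t_1)}_{\L1(\Omega;\reali)} \le 2\,\norma{u}_{\L\infty([0,T]\times\Omega;\reali)}\,\modulo{\Omega}$, the right hand side being finite by \Cref{lem:L1}, it is enough to prove the bound for $\delta$ small.

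Introduce $\Omega_g = \{x\in\Omega \colon X(s;t_2,x)\in\Omega \text{ for all } s\in[t_1,t_2]\}$ and $\Omega_b = \Omega\setminus\Omega_g$. On $\Omega_g$ the point $y_1(x):=X(t_1;t_2,x)$ is well defined and lies in $\Omega$, and inserting the cocycle identities into \eqref{eq:9} collapses both its branches (initial datum and boundary) to the single identity
\[
  u(t_2,x) = u\bigl(t_1,y_1(x)\bigr)\,\mathcal{E}(t_1,t_2,x)
  + \int_{t_1}^{t_2} a\bigl(\tau,X(\tau;t_2,x)\bigr)\,\mathcal{E}(\tau,t_2,x)\,\d\tau .
\]
Subtracting $u(t_1,x)$, integrating over $\Omega_g$ and splitting the three resulting terms, I would estimate: $\int_{\Omega_g}\modulo{u(t_1,y_1(x))-u(t_1,x)}\d{x}$ by an $\L1$–translation bound, using $\modulo{y_1(x)-x}\le \delta\,\norma{c}_{\L\infty([0,t_2]\times\Omega;\reali^n)}$, the fact that $x\mapsto y_1(x)$ is a diffeomorphism with Jacobian close to $1$ for $\delta$ small (by \eqref{eq:pxoX}), and $u(t_1)\in\BV(\Omega;\reali)$ with total variation controlled by \Cref{lem:tv}, whose hypotheses coincide with those assumed here — this yields $\O\,\delta$; next $\int_{\Omega_g}\modulo{u(t_1,x)}\,\modulo{\mathcal{E}(t_1,t_2,x)-1}\d{x} \le \O\,\delta\,\norma{u(t_1)}_{\L1(\Omega;\reali)}\le \O\,\delta$, since $\mathcal{E}(t_1,t_2,\cdot)$ is the exponential of $\int_{t_1}^{t_2}(A-\div c)$ with $A,\div c\in\L\infty$; and finally the Duhamel remainder, which a change of variables $y=X(\tau;t_2,x)$ reduces to $\O\int_{t_1}^{t_2}\norma{a(\tau)}_{\L\infty(\Omega;\reali)}\d\tau$ (it is here that the time regularity of $a$ enters: under \ref{it:H3} alone this is $\O\,\norma{a}_{\L1([t_1,t_2];\L\infty(\Omega;\reali))}$, giving in any case a uniform modulus of continuity).

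On $\Omega_b$ I would argue crudely, $\modulo{u(t_2,x)-u(t_1,x)} \le 2\,\norma{u}_{\L\infty([0,T]\times\Omega;\reali)}$, so that it suffices to bound $\modulo{\Omega_b}$. Every $x\in\Omega_b$ lies on a characteristic meeting $\partial\Omega$ at a time $\theta\in\mathopen]t_1,t_2\mathclose[$, whence $d(x,\partial\Omega)\le (t_2-\theta)\,\norma{c}_{\L\infty} < \delta\,\norma{c}_{\L\infty}$; thus $\Omega_b$ lies in a tubular neighbourhood of $\partial\Omega$ of width $\O\,\delta$, of measure $\O\,\delta$ by \ref{it:omega}. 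Adding the two contributions gives \eqref{eq:42}, with $\O$ depending on $\norma{c}_{\L\infty}$, $\norma{D_x c}_{\L1([0,t_2];\L\infty)}$, $\norma{\grad\div c}_{\L1([0,t_2]\times\Omega)}$, $\norma{A}_{\L\infty}$, $\norma{a}$, and — through \Cref{lem:tv} — on $\norma{u_o}_{\L\infty(\Omega;\reali)}+\tv(u_o)$.

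I expect the main obstacle to be the bookkeeping of the two branches of \eqref{eq:9}: the decomposition \eqref{eq:38} of $\Omega$ into characteristics issued from the initial datum and from the boundary is different at $t_1$ and at $t_2$, and the cocycle identity is precisely what allows matching the two formulas on $\Omega_g$ and isolating a genuine $\L1$–translation term — which is why the $\BV$ estimate of \Cref{lem:tv}, hence the extra assumptions $c(t)\in\C2$ and $\grad\div c\in\L1$, are needed — while the transition layer $\Omega_b$ is absorbed through the regularity of $\partial\Omega$. The remaining technical point is making the change of variables $x\mapsto X(t_1;t_2,x)$ quantitative, uniformly for $\delta$ small, via the Jacobian bound coming from \eqref{eq:pxoX}.
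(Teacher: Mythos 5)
Your proposal is correct and follows essentially the same route as the paper: the sets $\Omega_g$ and $\Omega_b$ are exactly the two pieces $X(t_2;t_1,\Omega)$ and $X(t_2;[t_1,t_2\mathclose[,\partial\Omega)$ of the decomposition \eqref{eq:38}/\eqref{eq:45}, the cocycle identity collapsing \eqref{eq:9} to a representation restarted at $t_1$ is the paper's starting point, and the three-term split (translation along the flow controlled via the $\BV$ bound of \Cref{lem:tv}, the $\modulo{\mathcal{E}-1}$ term, the Duhamel remainder) is identical. The only deviation is on the boundary layer, where you bound $\modulo{\Omega_b}=\mathcal{O}(1)\,(t_2-t_1)$ and use the $\L\infty$ bound on $u$ instead of the paper's direct estimate of the Duhamel integral there; both work, and your variant has the small merit of explicitly accounting for the $u(t_1,x)$ contribution on that set.
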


\begin{proof}
  By~\eqref{eq:38}, the following decomposition holds
  \begin{equation}
    \label{eq:45}
    \begin{array}{rcl}
      \norma{u (t_2) - u (t_1)}_{\L1 (\Omega;\reali)}
      & =
      & \norma{u (t_2) - u (t_1)}_{\L1 (X(t_2;t_1,\Omega);\reali)}
      \\
      &
      & + \norma{u (t_2) - u (t_1)}_{\L1 (X(t_2;[t_1,t_2[,\partial\Omega);\reali)} \,.
    \end{array}
  \end{equation}
  Estimate the two latter summands in~\eqref{eq:45}
  separately. By~\eqref{eq:9}
  \begin{align}
    \nonumber
    & \norma{u (t_2) - u (t_1)}_{\L1 (X(t_2;t_1,\Omega);\reali)}
    \\
    \nonumber
    \le\
    & \int_{X(t_2;t_1,\Omega)} \modulo{
      u\left(t_1,X(t_1;t_2,x)\right) \, \mathcal{E} (t_1,t_2,x) - u (t_1,x)
      } \d{x}
    \\
    \nonumber
    & \qquad +
      \int_{X(t_2;t_1,\Omega)} \int_{t_1}^{t_2}
      \modulo{a\left(\tau,X (\tau;t_2,x)\right) \, \mathcal{E} (\tau,t_2,x)}
      \d\tau \d{x}
    \\
    \label{eq:46}
    \le \
    & \int_{X(t_2;t_1,\Omega)} \modulo{
      u\left(t_1,X(t_1;t_2,x)\right) - u (t_1,x)
      } \, \mathcal{E} (t_1,t_2,x) \d{x}
    \\
    \label{eq:47}
    & + \int_{X(t_2;t_1,\Omega)} \modulo{ u (t_1,x) } \;
      \modulo{\mathcal{E} (t_1,t_2,x)-1}\d{x}
    \\
    \label{eq:48}
    & +
      \int_{X(t_2;t_1,\Omega)} \int_{t_1}^{t_2}
      \modulo{a\left(\tau,X (\tau;t_2,x)\right) \, \mathcal{E} (\tau,t_2,x)}
      \d\tau \d{x} \,.
  \end{align}
  To estimate~\eqref{eq:46}, we use~\cite[Lemma~5.1]{MauroMatthew} so
  that we obtain
  \begin{align*}
    & \int_{X(t_2;t_1,\Omega)} \modulo{
      u\left(t_1,X(t_1;t_2,x)\right) - u (t_1,x)
      } \, \mathcal{E} (t_1,t_2,x) \d{x}
    \\
    \le \
    & \dfrac{\norma{c}_{\L\infty ([t_1,t_2]\times\Omega;\reali^n)}}{\norma{D_x c}_{\L\infty ([t_1,t_2]\times\Omega;\reali^{n\times n})}}
      \left(
      e^{\norma{D_x c}_{\L\infty ([t_1,t_2]\times\Omega;\reali^{n\times n})} (t_2-t_1)} -1\right) \tv\left(u (t_1)\right)
    \\
    \le \
    & \norma{c}_{\L\infty ([t_1,t_2]\times\Omega;\reali^n)}
      e^{\norma{D_x c}_{\L\infty ([t_1,t_2]\times\Omega;\reali^{n\times n})} (t_2-t_1)}
      \tv\left(u (t_1)\right)
      (t_2 - t_1),
  \end{align*}
  and the total variation of $u$ might be estimated thanks
  to~\Cref{lem:tv}. The bounds for~\eqref{eq:47} and~\eqref{eq:48} follow from the
  definition~\eqref{eq:13} of $\mathcal{E}$:
  \begin{align*}
    & \int_{X(t_2;t_1,\Omega)} \modulo{ u (t_1,x) } \;
      \modulo{\mathcal{E} (t_1,t_2,x)-1}\d{x}
    \\
    \le
    \
    & \norm{u(t_1)}_{\L1 (\Omega; \R)} (t_2-t_1) \left(
      \norm{A}_{\L\infty ([t_1,t_2]\times\Omega; \R)}
      +\norm{\div c}_{\L\infty ([t_1,t_2]\times\Omega; \R)}
      \right)
    \\
    & \times \exp\left(\left(
      \norm{A}_{\L\infty ([t_1,t_2]\times\Omega; \R)}
      +\norm{\div c}_{\L\infty ([t_1,t_2]\times\Omega; \R)}
      \right) (t_2-t_1)\right) \,;
    \\
    & \int_{X(t_2;t_1,\Omega)} \int_{t_1}^{t_2}
      \modulo{a\left(\tau,X (\tau;t_2,x)\right) \, \mathcal{E} (\tau,t_2,x)}
      \d\tau \d{x}
    \\
    \le \
    & (t_2-t_1) \, \norm{a}_{\L\infty ([t_1,t_2]; \L1 (\Omega; \R))}
      \exp\left(\int_{t_1}^{t_2}\norm{A (\tau)}_{\L\infty (\Omega; \R)}
      + \norm{\div c}_{\L\infty (\Omega; \R)}\d\tau\right) \,.
  \end{align*}

  Consider now the second summand in~\eqref{eq:45}. Introduce
  $T_{t_1}(t_2,x) = \inf \{s \in [t_1,t_2] \colon$ $X(s;t_2,x) \in
  \Omega\}$ and compute
  \begin{align*}
    & \norma{u (t_2) - u (t_1)}_{\L1 (X(t_2;[t_1,t_2[,\partial\Omega);\reali)}
    \\
    \le \
    & \int_{X(t_2;[t_1,t_2[,\partial\Omega)}
      \modulo{
      \int_{T_{t_1} (t_2,x)}^{t_2}
      a\left(\tau,X (\tau;t_2,x)\right) \, \mathcal{E} (\tau,t_2,x)
      \d\tau
      }
      \d{x}.
  \end{align*}
  The same procedure used to bound~\eqref{eq:48} applies, completing
  the proof.
\end{proof}

\subsection{Coupling}
\label{sec:coupling}

\begin{proofof}{Theorem~\ref{thm:main}}
  Fix $T>0$.  Define $u_0 (t,x) = u_o (x)$ and $w_0(t,x) = w_o (x)$
  for all $(t,x) \in [0,T] \times \Omega$. For $i \in \naturali$,
  define recursively $u_{i+1}$ and $w_{i+1}$ as solutions to
  \begin{equation}
    \label{eq:28}
    \left\{
      \begin{array}{l@{\qquad}r@{\,}c@{\,}l}
        \partial_t u_{i+1}
        + \div \left(u_{i+1} \, c_i(t,x) \right)
        =
        A_i (t,x)\, u_{i+1} + a(t,x)
        & (t,x)
        & \in
        & [0,T] \times \Omega
        \\
        u (t,\xi) = 0
        & (t,\xi)
        & \in
        & [0,T] \times \partial \Omega
        \\
        u (0,x) = u_o (x)
        & x
        & \in
        & \Omega
      \end{array}
    \right.
  \end{equation}
  \begin{equation}
    \label{eq:30}
    \left\{
      \begin{array}{l@{\qquad}r@{\,}c@{\,}l}
        \partial_t w_{i+1}
        - \mu \, \Delta w_{i+1}
        =
        B_i (t,x) \, w_{i+1} + b (t,x)
        & (t,x)
        & \in
        & [0,T] \times \Omega
        \\
        w (t,\xi) = 0
        & (t,\xi)
        & \in
        & [0,T] \times \partial \Omega
        \\
        w (0,x) = w_o (x)
        & x
        & \in
        & \Omega
      \end{array}
    \right.
  \end{equation}
  where
  \begin{equation}
    \label{eq:31}
    \begin{array}{rcl}
      c_i (t,x)
      & =
      & v (t,w_i) (x)
      \\
      A_i (t,x)
      & =
      & \alpha \left(t,x,w_i (t,x)\right)
    \end{array}
    \qquad\qquad
    B_i (t,x)
    =
    \beta \left(t,x,u_i (t,x),w_i (t,x)\right) \,.
  \end{equation}
  We aim to prove that $(u_i,w_i)$ is a Cauchy sequence with respect
  to the $\L\infty ([0,T] ; \L1 (\Omega;\reali^2))$ distance as soon
  as $T$ is sufficiently small.

  Observe first that problem~\eqref{eq:30} fits into the framework
  of~\Cref{sec:parabolicDirichlet}, while problem~\eqref{eq:28} fits
  into the framework of~\Cref{sec:hyperbolic}.

  Consider the $w$ component. \Cref{prop:superStimePara} applies,
  ensuring the existence of a solution to~\eqref{eq:30} for all
  $i\in\N$. Moreover, if $b \geq 0$ and the initial datum $w_o$ is
  positive, the solution $w_i$ is positive.  By~\ref{it:betaFinal}
  and~\eqref{eq:31}, for all $i\in \N$, $B_i$ satisfies~\ref{it:P2}
  and for all $\tau \in [0,T]$
  \begin{equation}
    \label{eq:B_Linf}
    \norma{B_{i} (\tau)}_{\L\infty (\Omega; \R)}
    \le
    K_\beta \,,
  \end{equation}
  while by~\ref{it:b} the function $b$ satisfies~\ref{it:P3}.  The
  following uniform bounds on $w_i$ hold for every $i \in \N$:
  by~\ref{it:P_apriori} and~\ref{it:P_tv}
  in~\Cref{prop:superStimePara}, exploiting also~\eqref{eq:B_Linf},
  for all $\tau \in [0,T]$,
  \begin{align}
    \label{eq:w_L1}
    \norma{w_i (\tau)}_{\L1 (\Omega; \R)}
    \le \
    & e^{K_\beta \, \tau} \left(
      \norma{w_o}_{\L1 (\Omega;\reali)}
      +
      \norma{b}_{\L1 ([0,\tau]\times\Omega;\reali)}
      \right)
      = \colon
      C_{w,1}(\tau),
    \\
    \label{eq:w_Linf}
    \norma{w_i (\tau)}_{\L\infty (\Omega;\reali)}
    \le \
    & e^{K_\beta \, \tau}  \left(
      \norma{w_o}_{\L\infty (\Omega;\reali)}
      +
      \norma{b}_{\L1 ([0,\tau]; \L\infty(\Omega;\reali))}
      \right)
      = \colon
      C_{w,\infty} (\tau),
    \\
    \label{eq:w_TV}
    \tv\left(w_i (\tau, \cdot)\right)
    \le \
    & \tv (w_o) + \int_0^\tau \tv\left(b (s)\right)\dd{s}
      + \mathcal{O} (1) \sqrt{\tau}\, K_\beta \, \norma{w_i (\tau)}_{\L1 (\Omega; \R)}
      = \colon
      C_w^{\tv} (\tau).
  \end{align}
  By~\ref{it:P_stability} in~\Cref{prop:superStimePara} we get
  \begin{align}
    \nonumber
    \norma{w_{i+1} (t) - w_i (t)}_{\L1 (\Omega;\reali)}
    \leq \
    & \norma{B_i-B_{i-1}}_{\L1 ([0,t] \times \Omega; \R)}
      \left(\norma{w_o}_{\L\infty (\Omega; \R)}
      + \norma{b}_{\L1 ([0,t];\L\infty (\Omega; \R))}\right)
    \\
    \label{eq:33}
    & \times \exp\int_0^t\left(
      \norma{B_i (\tau)}_{\L\infty (\Omega; \R)}
      +
      \norma{B_{i-1} (\tau)}_{\L\infty (\Omega; \R)}
      \right)\d{\tau}.
  \end{align}
  By~\eqref{eq:31}, exploiting the hypothesis~\ref{it:betaFinal} we
  obtain
  \begin{align*}
    & \norma{B_i-B_{i-1}}_{\L1 ([0,t] \times \Omega; \R)}
    \\
    = \
    & \int_0^t\int_\Omega
      \modulo{\beta \left(\tau,x,u_i (\tau,x), w_i (\tau,x)\right)
      -\beta \left(\tau,x,u_{i-1} (\tau,x), w_{i-1} (\tau,x)\right)}
      \d{x}\d\tau
    \\
    \le \
    & K_\beta \left(
      \norma{u_i - u_{i-1}}_{\L1 ([0,t] \times \Omega; \R)}
      +
      \norma{w_i - w_{i-1}}_{\L1 ([0,t] \times \Omega; \R)}
      \right).
  \end{align*}
  Therefore, using also~\eqref{eq:B_Linf} and the notation introduced
  in~\eqref{eq:w_Linf}, \eqref{eq:33} becomes
  \begin{equation}
    \label{eq:diff_wi}
    \begin{aligned}
      \norma{w_{i+1} (t) - w_i (t)}_{\L1 (\Omega;\reali)} \leq \ &
      K_\beta \, e^{t \, K_\beta} C_{w, \infty} (t)
      \\
      & \times \left( \norma{u_i - u_{i-1}}_{\L1 ([0,t] \times \Omega;
          \R)} + \norma{w_i - w_{i-1}}_{\L1 ([0,t] \times \Omega; \R)}
      \right).
    \end{aligned}
  \end{equation}

  \smallskip

  Pass now to the $u$ component. The results of~\Cref{sec:hyperbolic}
  applies, ensuring the existence of a solution to~\eqref{eq:28} for
  all $i\in\N$. Moreover, if $a \geq 0$ and the initial datum $u_o$ is
  positive, the solution $u_i$ is positive, see~\Cref{lem:positive}.
  By~\ref{it:alpha} and~\eqref{eq:31}, for every $i\in \N$ we have
  that $A_i$ satisfies~\ref{it:H2} and for all $\tau \in [0,T]$,
  exploiting~\eqref{eq:w_Linf} and~\eqref{eq:w_TV},
  \begin{align}
    \label{eq:A_Linf}
    \norma{A_{i} (\tau)}_{\L\infty (\Omega; \R)}
    \le \
    & K_\alpha \left(1+C_{w,\infty} (\tau)\right),
    \\
    \nonumber
    \tv \left(A_i (\tau, \cdot)\right)
    = \
    & \tv \alpha\left(\tau, \cdot, w_i (\tau, \cdot)\right)
    \\ \nonumber
    \le \
    & K_\alpha  \left(1
      + C_{w, \infty} (\tau)
      + \tv \left(w_i (\tau,\cdot)\right)\right)
    \\ \label{eq:A_tv}
    \le \
    &  K_\alpha \left(1
      + C_{w, \infty} (\tau)
      + C_w^{\tv} (\tau)
      \right),
  \end{align}
  while by~\ref{it:a} the function $a$ satisfies~\ref{it:H3}.
  By~\ref{it:v}, for every $i \in \N$ the function $c_i$
  satisfies~\ref{it:H1} and, moreover,
  $c_i (t) \in \C2 (\Omega; \R^n)$ for all $t \in [0,T]$ and
  $\nabla \div c_i \in \L1 ([0,T]\times\Omega; \R^n)$. In particular,
  thanks to~\ref{it:v} and~\eqref{eq:w_L1}, the following bounds hold
  for every $i \in \N$ and $t \in [0,T]$:
  \begin{align}
    \label{eq:v_divL1}
    \norm{\div c_i}_{\L1 ([0,t]; \L\infty (\Omega; \R))}
    \le \
    & K_v \norm{w_i}_{\L1 ([0,t] \times \Omega; \R)}
      \le \
      K_v \, t \, C_{w,1} (t),
    \\  \label{eq:v_DxL1}
    \norm{D_x c_i}_{\L1 ([0,t]; \L\infty (\Omega; \R^{n\times n}))}
    \le \
    &  K_v \norm{w_i}_{\L1 ([0,t] \times \Omega; \R)}
      \le \
      K_v \, t \,  C_{w,1} (t),
    \\ 
    \norm{\grad \div c_i (t)}_{\L1 (\Omega; \R^n)}
    \le \
    \label{eq:v_graddivL1}
    & C_v \left(t,  C_{w,1} (t)\right)  C_{w,1} (t).
  \end{align}
  The following uniform bounds on $u_i$ hold for every $i \in \N$:
  by~\Cref{lem:L1} and~\Cref{lem:tv}, exploiting
  also~\eqref{eq:w_L1}--\eqref{eq:w_TV}
  and~\eqref{eq:A_Linf}--\eqref{eq:v_graddivL1}, for all
  $\tau \in [0,T]$,
  \begin{align}
    \nonumber
    \norm{u_i (\tau)}_{\L1 (\Omega; \R)}
    \le \
    & \left(\norm{u_o}_{\L1 (\Omega; \R)}
      + \norm{a}_{\L1 ([0,\tau] \times \Omega; \R)}\right)
      \exp\left(K_\alpha \, \tau \left(1+C_{w,\infty} (\tau)\right)
      \right)
    \\
    \label{eq:u_L1}
    = \colon \
    & C_{u,1} (\tau),
    \\
    \nonumber
    \norm{u_i (\tau)}_{\L\infty (\Omega; \R)}
    \le \
    & \left(\norm{u_o}_{\L\infty (\Omega; \R)}
      + \norm{a}_{\L1 ([0,\tau]; \L\infty (\Omega; \R))}\right)
    \\ \nonumber
    & \qquad \times \exp\left(K_\alpha \, \tau \left(1+C_{w,\infty} (\tau)\right)
      + K_v \, \tau \, C_{w,1} (\tau)
      \right)
    \\
    \label{eq:u_Linf}
    = \colon \
    & C_{u,\infty} (\tau),
    \\
    \nonumber
    \tv\left(u_i (\tau, \cdot)\right)
    \le \
    &  \exp\left(
      K_\alpha \, \tau \left(1+C_{w,\infty} (\tau)\right)
      +
      K_v \, \tau \, C_{w,1} (\tau)
      \right)
    \\
    \nonumber
    & \times \Biggl(
      \tv(u_o) + \mathcal{O}(1) \norma{u_o}_{\L\infty(\Omega; \R)}
      + \int_0^\tau \tv\left( a (s)\right) \dd s \Biggr)
    \\ \nonumber
    & 
      + C_{u, \infty} (\tau)
      \Biggl(
      K_\alpha \tau \left(1
      + C_{w, \infty} (\tau)
      + C_w^{\tv} (\tau)\right)
      +
      C_v \, \tau \left(\tau,  C_{w,1} (\tau)\right)  C_{w,1} (\tau)
      \Biggr)
    \\
    \label{eq:u_TV}
    = \colon \
    & C_u^{\tv} (\tau).
  \end{align}
  By~\Cref{lem:A} and~\Cref{lem:c}, exploiting~\eqref{eq:A_Linf},
  \eqref{eq:u_L1} and~\eqref{eq:u_TV}, we get
  \begin{align}
    \nonumber
    \norma{u_{i+1} (t) - u_i (t)}_{\L1 (\Omega;\R)}
    \le \
    & 
      C_{u,1} (t)
      \int_0^t
      \norma{\div\left(c_i (\tau)  - c_{i-1} (\tau)\right)}_{\L\infty (\Omega;\R)} \dd\tau
    \\\nonumber
    & + C_u^{\tv} (t)
      \int_0^t \norma{c_i (\tau) -c_{i-1} (\tau)}_{\L\infty (\Omega; \R^n)} \dd\tau
    \\ \label{eq:35}
    & + \exp \left(
      t \,  K_\alpha \left(1+C_{w,\infty} (t)\right)
      \right)
    \\  \nonumber
    & \times
      \left(
      \norma{u_o}_{\L\infty (\Omega;\R)}
      +
      \norma{a}_{\L1 ([0,t]; \L\infty(\Omega;\R))}
      \right)
      \norma{A_i-A_{i-1}}_{\L1 ([0,t]\times\Omega;\R)} .
  \end{align}
  By~\eqref{eq:31}, exploiting the hypothesis~\ref{it:alpha} we obtain
  \begin{equation}
    \label{eq:34}
    \begin{aligned}
      \norma{A_i-A_{i-1}}_{\L1 ([0,t]\times\Omega;\R)} = \ & \int_0^t
      \int_\Omega \modulo{ \alpha\left(\tau,x,w_i (\tau,x)\right) -
        \alpha\left(\tau,x,w_{i-1} (\tau,x)\right) }\dd{x}\dd\tau
      \\
      \le\ & K_\alpha \, \norm{w_i - w_{i-1}}_{\L1
        ([0,t]\times\Omega;\R)}.
    \end{aligned}
  \end{equation}
  By~\eqref{eq:31}, exploiting the hypothesis~\ref{it:v}
  and~\eqref{eq:w_L1} we obtain
  \begin{align}
    \label{eq:39}
    \norma{\div\left(c_i (\tau)  - c_{i-1} (\tau)\right)}_{\L\infty (\Omega;\R)}
    \le \
    &
      C_v  \left( t , C_{w,1} (t)
      \right)
      \norm{w_i (\tau) - w_{i-1} (\tau)}_{\L1 (\Omega; \R)},
    \\
    \label{eq:40}
    \norma{c_i (\tau) -c_{i-1} (\tau)}_{\L\infty (\Omega; \R^n)}
    \le \
    & K_v  \norm{w_i (\tau) - w_{i-1} (\tau)}_{\L1 (\Omega; \R)}.
  \end{align}
  Hence, inserting~\eqref{eq:34}, \eqref{eq:39} and~\eqref{eq:40}
  into~\eqref{eq:35} yields
  \begin{align}
    \nonumber
    & \norma{u_{i+1} (t) - u_i (t)}_{\L1 (\Omega;\R)}
    \\
    \nonumber
    \le \
    & \Bigl(
      C_{u,1} (t) \,  C_v \! \left( t , C_{w,1} (t)\right)
      + C_u^{\tv} (t)
    \\ \label{eq:diff_ui}
    & +K_\alpha \, \exp \left(
      t \,  K_\alpha \left(1+C_{w,\infty} (t)\right)
      \right)
      \left(
      \norma{u_o}_{\L\infty (\Omega;\R)}
      +
      \norma{a}_{\L1 ([0,t]; \L\infty(\Omega;\R))}
      \right)
      \Bigr)
    \\
    \nonumber
    & \times
      \norm{w_i  - w_{i-1} }_{\L1 ([0,t] \times \Omega; \R)}.
  \end{align}

  \smallskip Collecting together~\eqref{eq:diff_wi}
  and~\eqref{eq:diff_ui} we obtain
  \begin{displaymath}
    \begin{aligned}
      & \norm{w_{i+1} - w_i}_{\L\infty([0,t];\L1 (\Omega; \R))} +
      \norm{u_{i+1} - u_i}_{\L\infty([0,t];\L1 (\Omega; \R))}
      \\
      \le \ & C_{u,w}(t) \, t \left( \norm{w_{i} - w_{i-i}}_{\L\infty
          ([0,t]; \L1 (\Omega; \R))} + \norm{u_i - u_{i-1}}_{\L\infty
          ([0,t]; \L1 (\Omega; \R))} \right),
    \end{aligned}
  \end{displaymath}
  where
  \begin{align*}
    C_{u,w}(t) = \
    &  K_\beta \,  e^{t \, K_\beta}
      C_{w, \infty} (t)
      +
      \Bigl(
      C_{u,1} (t) \,  C_v \! \left( t , C_{w,1} (t)\right)
      + C_u^{\tv} (t)
    \\
    & +K_\alpha \, \exp \left(
      t \,  K_\alpha \left(1+C_{w,\infty} (t)\right)
      \right)
      \left(
      \norma{u_o}_{\L\infty (\Omega;\R)}
      +
      \norma{a}_{\L1 ([0,t]; \L\infty(\Omega;\R))}
      \right)
      \Bigr).
  \end{align*}
  Choosing a sufficiently small $t_*>0$, we ensure that $(u_i,w_i)$ is
  a Cauchy sequence in the complete metric space
  $\L\infty \left([0,t_*]; \L1 (\Omega; \R^2)\right)$. Call $(u_*,w_*)$
  its limit.

  Then, the bounds~\eqref{eq:51} and~\eqref{eq:52} directly follow
  from~\eqref{eq:w_L1} and~\eqref{eq:w_Linf} by the lower
  semicontinuity of the $\L\infty$ norm with respect to the $\L1$
  distance. The same procedure applies to get~\eqref{eq:54}
  and~\eqref{eq:55} from~\eqref{eq:u_L1} and~\eqref{eq:u_Linf}. If
  $a \geq 0$, $b\geq 0$ and both components of the initial datum
  $(u_o, w_o)$ are positive, then also the components of $(u_*, w_*)$
  are positive.

  \smallskip

  We now prove that $(u_*,w_*)$ solves~\eqref{eq:1} in the sense
  of~\Cref{def:sol}. Note that by~\ref{it:v}, the sequence
  $v (\cdot, w_i)$ converges to $v (\cdot,w_*)$ in
  $\L\infty\left([0,t_*]; \L1 (\Omega;\reali)\right)$. Similarly,
  by~\ref{it:alpha} and~\ref{it:betaFinal}, $\alpha (\cdot,\cdot,w_i)$
  and $\beta (\cdot, \cdot, u_i, w_i)$ converge to
  $\alpha (\cdot,\cdot,w_*)$ and $\beta (\cdot, \cdot, u_*, w_*)$. Two
  applications of the Dominated Convergence Theorem ensure that the
  integral equality~\eqref{eq:41} for the hyperbolic problems
  and~\eqref{eq:4} for the parabolic problem do hold.

  By~\eqref{eq:u_Linf}, we also have
  $u_* \in \L\infty ([0,t_*]\times \Omega; \reali)$. Moreover,
  Lemma~\ref{lem:tcont} ensures that
  $u_* \in \C0 \left([0,t_*]; \L1(\Omega; \reali)\right)$, using
  also~\eqref{eq:A_Linf}--\eqref{eq:v_graddivL1}.

  By construction, we have
  $w_* \in \C0 \left([0,t_*]; \L1 (\Omega;\reali)\right)$. Indeed,
  the uniform bound~\eqref{eq:w_Linf} shows that
  $w_* \in \L\infty ([0,t_*]\times \Omega; \reali) \subseteq \L\infty
  \left([0,t_*]; \L1 (\Omega;\reali)\right)$. Moreover, a further
  application of the Dominated Convergence Theorem shows that $w_*$
  satisfies~\eqref{eq:5}. Hence, proceeding as in Claim~4 in the proof
  of~\Cref{prop:superStimePara}, we have that
  $w_* \in \C0 \left([0,t_*]; \L1 (\Omega;\reali)\right)$.

  Thus, $(u_*,w_*)$ satisfies the requirements in~\Cref{def:sol}.
  Moreover, this solution $(u_*, w_*)$ can be uniquely extended to all
  $[0,T]$. The proof is identical to~\cite[Theorem~2.2,
  Step~6]{SAPM2021}.

  \medskip

  Following the same techniques used in~\cite[Theorem~2.2,
  Step~7]{SAPM2021}, we can prove also the Lipschitz continuous
  dependence of the solution to~\eqref{eq:1} on the initial data. Let
  $(u_o, w_o)$ and $(\tilde u_o, \tilde w_o)$ be two sets of initial
  data. Call $(u,w)$ and $(\tilde u, \tilde w)$ the corresponding
  solutions to~\eqref{eq:1} in the sense of
  Definition~\ref{def:sol}. The proof is based on~\eqref{eq:11},
  \eqref{eq:B_Linf}, \eqref{eq:A_Linf}, \Cref{item:3} in~\Cref{lem:L1}
  and computations analogous to those leading to~\eqref{eq:diff_wi}
  and~\eqref{eq:diff_ui} now yield
  \begin{align*}
    &
      \norm{u(t)- \tilde u (t)}_{\L1 (\Omega; \R)}
      +
      \norm{w(t)- \tilde w (t)}_{\L1 (\Omega; \R)}
    \\
    \le \
    & \norm{u_o - \tilde u_o}_{\L1 (\Omega; \R)} \exp\left(K_\alpha \,t
      \left(1 + K_{w,\infty} (t)\right)\right)
      + \norm{w_o - \tilde w_o}_{\L1 (\Omega; \R)} e^{K_\beta \,t}
    \\
    & +  K_\beta \, e^{t \, K_\beta} K_{w, \infty} (t)
      \left( \int_0^t
      \norma{u (\tau) - \tilde u (\tau)}_{\L1 (\Omega;\R)}
      + \norma{w(\tau) - \tilde w (\tau)}_{\L1 (\Omega; \R)}
      \dd\tau
      \right)
    \\
    & + K_1(t)
      \int_0^t \norm{w (\tau)  -\tilde w (\tau)}_{\L1(\Omega; \R)}\dd\tau.
  \end{align*}
  where
  \begin{align}
    \nonumber
    K_{w, \infty} (t) = \
    & \min \left\{C_{w, \infty} (t),
      C_{\tilde w, \infty} (t)\right\},
    \\
    \label{eq:57}
    K_1 (t) = \
    & \min \left\{
      C_{u,1} (t) \,  C_v \! \left( t , C_{w,1} (t)\right)
      + C_u^{\tv} (t)  +K_\alpha \, C_{u,\infty} (t),
      \right.
    \\ \nonumber
    & \qquad\quad\left.
      C_{\tilde u,1} (t) \,  C_v \! \left( t , C_{\tilde w,1} (t)\right)
      + C_{\tilde u}^{\tv} (t)  +K_\alpha \, C_{\tilde u ,\infty} (t)
      \right\},
  \end{align}
  and $C_{\tilde w, 1}$, $C_{\tilde w, \infty}$, $C_{\tilde w}^{\tv}$,
  $C_{\tilde u, 1}$, $C_{\tilde u, \infty}$, $C_{\tilde u}^{\tv}$ are
  defined accordingly to~\eqref{eq:w_L1}, \eqref{eq:w_Linf},
  \eqref{eq:w_TV}, \eqref{eq:u_L1}, \eqref{eq:u_Linf},
  \eqref{eq:u_TV}, corresponding to the initial datum
  $(\tilde u_o, \tilde w_o)$.  Then, Gronwall
  Lemma~\cite[Lemma~3.1]{bressan-piccoli} yields
  \begin{align*}
    & \norm{u(t)- \tilde u (t)}_{\L1 (\Omega; \R)}
      +
      \norm{w(t)- \tilde w (t)}_{\L1 (\Omega; \R)}
    \\
    \le \
    &
      \left(
      \norm{u_o - \tilde u_o}_{\L1 (\Omega; \R)}
      + \norm{w_o - \tilde w_o}_{\L1 (\Omega; \R)}\right)
      \int_0^t \mathcal{K}_o (\tau)
      \exp\left(\int_\tau^t \mathcal{K} (s) \dd{s}\right) \dd\tau,
  \end{align*}
  with
  \begin{align*}
    \mathcal{K}_o (\tau) = \
    & \exp\left(
      \max\left\{
      \left(K_\alpha \, \tau \left(1 + K_{w,\infty} (\tau)\right)\right),
      K_\beta \, \tau
      \right\}
      \right),
    &
    \mathcal{K} (\tau) = \
    &  K_\beta \, e^{\tau \, K_\beta} K_{w, \infty} (\tau) + K_1 (\tau).
  \end{align*}
  Uniqueness of solution readily follows.

  \medskip

  Focus now on the stability of~\eqref{eq:1} with respect to the
  controls $a$ and $b$. Let $a, \tilde a$ satisfy~\ref{it:a},
  $b, \tilde b$ satisfy~\ref{it:b}. Call $(u,w)$ and
  $(\tilde u, \tilde w)$ the solutions to~\eqref{eq:1} corresponding
  to the functions $a,b$ and $\tilde a, \tilde b$ respectively.
  Similarly to the previous step, by~\eqref{eq:11}, \eqref{eq:B_Linf},
  \eqref{eq:A_Linf}, \Cref{item:3} in~\Cref{lem:L1} and computations
  analogous to those leading to~\eqref{eq:diff_wi}
  and~\eqref{eq:diff_ui}, we obtain
  \begin{align*}
    & \norm{u(t)- \tilde u (t)}_{\L1 (\Omega; \R)}
      +
      \norm{w(t)- \tilde w (t)}_{\L1 (\Omega; \R)}
    \\
    \le \
    & \norm{a-\tilde a}_{\L1 ([0,t]\times\Omega; \R)}
      \exp\left(K_\alpha \, t \left(1+ K_{w, \infty} (t)\right) \right)
      + \norm{b- \tilde b}_{\L1 ([0,t]\times\Omega; \R)} e^{K_\beta \, t}
    \\
    & +  K_\beta \, e^{t \, K_\beta} K_{w, \infty} (t)
      \left( \int_0^t
      \norma{u (\tau) - \tilde u (\tau)}_{\L1 (\Omega;\R)}
      + \norma{w(\tau) - \tilde w (\tau)}_{\L1 (\Omega; \R)}
      \dd\tau
      \right)
    \\
    & +  K_1(t)
      \int_0^t \norm{w (\tau)  -\tilde w (\tau)}_{\L1(\Omega; \R)}\dd\tau,
  \end{align*}
  where $K_{w, \infty} (t)$ and $K_1(t)$ are defined as
  in~\eqref{eq:57}, with the main difference that the
  \emph{tilde}-versions of $C_{*,*}$ now corresponds to the functions
  $\tilde a$ and $\tilde b$. An application of Gronwall
  Lemma~\cite[Lemma~3.1]{bressan-piccoli} yields the desired estimate.
\end{proofof}

\paragraph{Acknowledgement:}
The authors were partly supported by the GNAMPA~2022 project
\emph{Evolution Equations: Well Posedness, Control and
  Applications}.


{ \small

  \bibliography{dirichlet}

\begin{thebibliography}{10}

\bibitem{MR542510}
C.~Bardos, A.~Y. le~Roux, and J.-C. N\'{e}d\'{e}lec.
\newblock First order quasilinear equations with boundary conditions.
\newblock {\em Comm. Partial Differential Equations}, 4(9):1017--1034, 1979.

\bibitem{MR4456851}
A.~Bressan, M.~T. Chiri, and N.~Salehi.
\newblock On the optimal control of propagation fronts.
\newblock {\em Math. Models Methods Appl. Sci.}, 32(6):1109--1140, 2022.

\bibitem{bressan-piccoli}
A.~Bressan and B.~Piccoli.
\newblock {\em Introduction to the mathematical theory of control}, volume~2 of
  {\em AIMS Series on Applied Mathematics}.
\newblock AIMS, Springfield, MO, 2007.

\bibitem{Colli}
P.~Colli, H.~Gomez, G.~Lorenzo, G.~Marinoschi, A.~Reali, and E.~Rocca.
\newblock Optimal control of cytotoxic and antiangiogenic therapies on prostate
  cancer growth.
\newblock {\em Math. Models Methods Appl. Sci.}, 31(7):1419--1468, 2021.

\bibitem{teo}
R.~M. Colombo, M.~Garavello, F.~Marcellini, and E.~Rossi.
\newblock General renewal equations motivated by biology and epidemiology.
\newblock {\em J. Differential Equations}, 354:133--169, 2023.

\bibitem{MauroMatthew}
R.~M. Colombo, M.~Garavello, and M.~Tandy.
\newblock On the coupling of well posed differential models.
\newblock {\em Nonlinear Analysis: Theory, Methods and Applications}.
\newblock To appear.

\bibitem{CHM2011}
R.~M. Colombo, M.~Herty, and M.~Mercier.
\newblock Control of the continuity equation with a non local flow.
\newblock {\em ESAIM Control Optim. Calc. Var.}, 17(2):353--379, 2011.

\bibitem{parahyp}
R.~M. Colombo and E.~Rossi.
\newblock Hyperbolic predators vs. parabolic prey.
\newblock {\em Commun. Math. Sci.}, 13(2):369--400, 2015.

\bibitem{siam2018}
R.~M. Colombo and E.~Rossi.
\newblock Nonlocal conservation laws in bounded domains.
\newblock {\em SIAM J. Math. Anal.}, 50(4):4041--4065, 2018.

\bibitem{SAPM2021}
R.~M. Colombo and E.~Rossi.
\newblock Well-posedness and control in a hyperbolic-parabolic
  parasitoid-parasite system.
\newblock {\em Stud. Appl. Math.}, 147(3):839--871, 2021.

\bibitem{FriedmanBook}
A.~Friedman.
\newblock {\em Partial differential equations of parabolic type}.
\newblock Prentice-Hall Inc., Englewood Cliffs, N.J., 1964.

\bibitem{Giusti}
E.~Giusti.
\newblock {\em Minimal surfaces and functions of bounded variation}, volume~80
  of {\em Monographs in Mathematics}.
\newblock Birkh\"{a}user Verlag, Basel, 1984.

\bibitem{HanSongYu}
Z.-J. Han, H.-Q. Song, and K.~Yu.
\newblock Sharp decay rates of degenerate hyperbolic-parabolic coupled system:
  rectangular domain vs one-dimensional domain.
\newblock {\em J. Differential Equations}, 349:53--82, 2023.

\bibitem{MR4273477}
G.~Hong, H.~Peng, Z.-A. Wang, and C.~Zhu.
\newblock Nonlinear stability of phase transition steady states to a
  hyperbolic-parabolic system modeling vascular networks.
\newblock {\em J. Lond. Math. Soc. (2)}, 103(4):1480--1514, 2021.

\bibitem{KPS2018bounded}
A.~Keimer, L.~Pflug, and M.~Spinola.
\newblock Nonlocal scalar conservation laws on bounded domains and applications
  in traffic flow.
\newblock {\em SIAM J. Math. Anal.}, 50(6):6271--6306, 2018.

\bibitem{MR0241822}
O.~A. Lady\v{z}enskaja, V.~A. Solonnikov, and N.~N. Uralceva.
\newblock {\em Linear and quasilinear equations of parabolic type}.
\newblock Translations of Mathematical Monographs, Vol. 23. American
  Mathematical Society, Providence, R.I., 1968.
\newblock Translated from the Russian by S. Smith.

\bibitem{MR2322018}
S.~Martin.
\newblock First order quasilinear equations with boundary conditions in the
  {$L^\infty$} framework.
\newblock {\em J. Differential Equations}, 236(2):375--406, 2007.

\bibitem{Pfab2018489}
F.~Pfab, M.~Stacconi, G.~Anfora, A.~Grassi, V.~Walton, and A.~Pugliese.
\newblock Optimized timing of parasitoid release: a mathematical model for
  biological control of {D}rosophila suzukii.
\newblock {\em Theoretical Ecology}, 11(4):489--501, 2018.

\bibitem{QuittnerSouplet}
P.~Quittner and P.~Souplet.
\newblock {\em Superlinear parabolic problems}.
\newblock Birkh\"auser Advanced Texts: Basler Lehrb\"ucher. Birkh\"auser
  Verlag, Basel, 2007.
\newblock Blow-up, global existence and steady states.

\bibitem{MR3819847}
E.~Rossi.
\newblock Definitions of solutions to the {IBVP} for multi-dimensional scalar
  balance laws.
\newblock {\em J. Hyperbolic Differ. Equ.}, 15(2):349--374, 2018.

\bibitem{MR1884231}
J.~Vovelle.
\newblock Convergence of finite volume monotone schemes for scalar conservation
  laws on bounded domains.
\newblock {\em Numer. Math.}, 90(3):563--596, 2002.

\end{thebibliography}

  \bibliographystyle{abbrv}

}

\end{document}